\renewcommand\labelenumi{(\roman{enumi})}
\renewcommand\theenumi\labelenumi
\newcommand{\Z}{\ensuremath{\mathbb{Z}}}
\newcommand{\Q}{\ensuremath{\mathbb{Q}}}
\newcommand{\F}{\ensuremath{\mathbb{F}}}
\theoremstyle{plain}
\newtheorem{thm}{Theorem}[section]
\newtheorem{lem}[thm]{Lemma}
\newtheorem{cor}[thm]{Corollary}
\newtheorem{prop}[thm]{Proposition}
\theoremstyle{definition}
\newtheorem{rmq}[thm]{Remark}
\DeclareMathOperator{\Pic}{Pic}
\DeclareMathOperator{\Div}{Div}
\DeclareMathOperator{\PDiv}{PDiv}
\DeclareMathOperator{\ord}{ord}
\DeclareMathOperator{\divisor}{div}
\DeclareMathOperator{\dis}{disc}
\DeclareMathOperator{\Gal}{Gal}
\DeclareMathOperator{\rk}{rk}
\DeclareMathOperator{\contr}{contr}
\newcommand{\PP}{\mathbb{P}}
\newcommand{\Egp}{\mathcal{N}}
\newcommand{\Emin}{\mathcal{E}}
\newcommand{\Gm}{\mathbf{G}_{{\rm m}}}
\newcommand{\tors}{\text{\rm tors}}
\newcommand{\kbar}{\bar{k}}
\def \O {\mathcal{O}}
\begin{document}

\title{Integral points on elliptic curves with $j$-invariant $0$ over $k(t)$}

\author{Jean Gillibert \and Emmanuel Hallouin \and Aaron Levin}

\date{January 2024}

\maketitle

\begin{abstract}
We consider elliptic curves defined by an equation of the form $y^2=x^3+f(t)$, where $f\in k[t]$ has coefficients in a perfect field $k$ of characteristic not $2$ or $3$. By performing $2$ and $3$-descent, we obtain, under suitable assumptions on the factorization of $f$, bounds for the number of integral points on these curves. These bounds improve on a general result by Hindry and Silverman. When $f$ has degree at most $6$, we give exact expressions for the number of integral points of small height in terms of certain subgroups of Picard groups of the $k$-curves corresponding to the $2$ and $3$-torsion of our curve. This allows us to recover explicit results by Bremner, and gives new insight into Pillai's equation.
\end{abstract}


\section{Introduction}

Let $k$ be a perfect field of characteristic not $2$ or $3$. In the applications we have in mind, $k$ may be a number field, or a finite field, or the algebraic closure of such fields.

In this paper, we consider elliptic curves $E$ over $k(t)$ defined by an equation of the form
$$
y^2=x^3+f(t),
$$
where $f\in k[t]$ is a polynomial which we require to satisfy specific assumptions depending on the statements we prove. More precisely, we are interested in the set of integral points on (the given Weierstrass equation of) $E$, that is, the set
\begin{align*}
E(k[t]):=\{(x,y)\in E(k(t))\mid x,y\in k[t]\}.
\end{align*}
When $k$ has characteristic $0$ and $f$ is not a 6th power in $k[t]$ (i.e., $E$ is nonconstant), it was proved by Lang \cite{lang1960} that this set is finite; an effective proof was given by Mason \cite[Chap.~IV, \S{}3]{mason84}. In positive characteristic, this set may be infinite for specific $f$, see \cite[Theorem~1.2]{SS}.

While the elliptic curves $y^2=x^3+f(t)$ are quite special, the integral points on such curves gain increased importance from the observation that they are essentially in bijection with elliptic curves over $k(t)$ with discriminant $f(t)$, up to a constant (see \cite[\S 2.1]{Shioda}).  This idea was famously exploited earlier by Shafarevich over number fields \cite{shafarevich62}.

In continuation of previous work by the first and the third author \cite{gl18a}, we perform on such curves explicit $2$ and $3$-descent computations, in close analogy with the number field case. Under specific requirements on $f$, we prove that the $2$ and $3$-descent maps are finite-to-one when restricted to $E(k[t])$. This allows us to deduce explicit upper bounds for the number of integral points on $E$. Our main result is the following.

\begin{thm}
\label{intbound}
Let $k$ be a perfect field of characteristic not $2$ or $3$, and let $E$ be an elliptic curve over $k(t)$ defined by the Weierstrass equation $y^2=x^3+f(t)$, for some $f\in k[t]$.

Assume that $f=f_1f_2^2f_3^3f_4^4f_5^5$ where the $f_i$ are pairwise coprime, separable polynomials in $k[t]$, and $f_1$ is nonconstant. We let $d:=\deg f$, $d_i:=\deg f_i$ and we denote by $\omega_i$ the number of irreducible factors of $f_i$.

\begin{enumerate}
\item Assume that $3\nmid d$, and that $f=f_1f_2^2f_4^4$. Then the set~$E(k[t])$ of integral points on~$E$ satisfies
\begin{align*}
\#E(k[t])&\leq 2^{\rk_{\Z} E(k(t))+1}(2^{\omega_2+\omega_4+1}+1)-2\leq 2^{2d_1+2d_2-1}\left(2^{\omega_2+\omega_4+1}+1\right)-2.
\end{align*}
If in addition $f=f_1$, the previous bound can be sharpened as follows
\begin{align*}
\#E(k[t])&\leq 2^{\rk_{\Z} E(k(t))+2}-2\leq 2^{2d}-2.
\end{align*}
\item Assume $d$ is odd, and that $f=f_1f_3^3f_5^5$. Then the set~$E(k[t])$ of integral points on~$E$ satisfies
\begin{align*}
\#E(k[t])&\leq
 \begin{cases}
 2^{\omega_3}\cdot \left(3^{\rk_{\Z} E(k(t))+1}-1\right) & \text{if $\zeta_3\notin k$} \\
 2^{\omega_3}\cdot \left(3^{\frac{1}{2}\rk_{\Z} E(k(t))+2} -3\right) & \text{if $\zeta_3\in k$}
 \end{cases}
\end{align*}
where $\zeta_3$ denotes a primitive third root of unity in $\kbar$. In particular,
\begin{align*}
\#E(k[t])\leq 2^{\omega_3}\cdot (3^{d+1}-3).
\end{align*}
\end{enumerate}
\end{thm}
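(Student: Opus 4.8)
The plan is to run explicit $2$-descent (for part~(i)) and $3$-descent (for part~(ii)) and to show that the relevant descent map is finite-to-one on $E(k[t])$, with a quantitative bound on the fibers. I would organize the argument as follows.

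First, for part~(i), suppose $f=f_1 f_2^2 f_4^4$ with $f_1$ nonconstant and $3\nmid d$. Under this hypothesis the curve $E\colon y^2=x^3+f(t)$ has no $2$-torsion over $k(t)$ (since $x^3+f$ has no root in $k(t)$ — this is where $f_1$ nonconstant and separability enter, together with the shape of $f$). So the $2$-descent map is the injection $E(k(t))/2E(k(t))\hookrightarrow$ a Selmer-type group, and composing with the embedding of $E(k(t))/2E(k(t))$ we get a map $\varphi\colon E(k(t))\to V$ into an $\F_2$-vector space. The key geometric input is to identify, for an integral point $P=(x,y)\in E(k[t])$, the class $\varphi(P)$ with (the class of) the polynomial $x-\xi$, where $\xi$ is a root of $x^3+f$, living in a quotient of the unit/Picard group of the cubic $k$-algebra $k[t][x]/(x^3+f(t))$; the constraint that $P$ is integral forces $x-\xi$ to be supported only at the primes dividing $f$, i.e.\ at $f_2$ and $f_4$ (the $f_1$-part being absorbed), which is what produces the exponent $\omega_2+\omega_4+1$. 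Then I would bound the number of integral points in a fixed fiber of $\varphi$: two integral points $P$, $Q$ with $\varphi(P)=\varphi(Q)$ differ by a point in $2E(k(t))$, and a height/degree argument (the $x$-coordinates of integral points have degree $\le \deg f$ bounded, while doubling strictly increases height beyond a point) shows each fiber contains at most two integral points $\{P,-P\}$ besides possibly $O$. Combining: $\#E(k[t])\le 2\cdot \#\mathrm{Im}(\varphi) - 1$, and $\#\mathrm{Im}(\varphi)\le 2^{\rk E(k(t))+1}\cdot 2^{\omega_2+\omega_4+1}$ roughly, giving the stated bound after bookkeeping; the torsion factor $2$ and the subtraction of $2$ come from carefully counting $O$ and the $2$-torsion contribution. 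For the sharpened case $f=f_1$, the support condition is vacuous ($\omega_2=\omega_4=0$ and there are no bad primes other than those already accounted for), collapsing the Selmer bound to $2^{\rk E(k(t))+2}$. The second inequality in each case follows from the standard bound $\rk_\Z E(k(t))\le 2d_1+2d_2-2$ (resp.\ $\le 2d-2$ when $f=f_1$), which comes from the Shioda–Tate formula applied to the associated elliptic surface, whose geometry (Euler number, reducible fibers) is controlled by the degrees $d_i$.

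For part~(ii), with $d$ odd and $f=f_1 f_3^3 f_5^5$, I would instead perform $3$-descent. The curve $y^2=x^3+f$ has $j$-invariant $0$, so it has complex multiplication by $\Z[\zeta_3]$ after adjoining $\zeta_3$; the $3$-isogeny structure is governed by whether $\zeta_3\in k$. When $\zeta_3\notin k$, one works with $E(k(t))/3E(k(t))$ and a $3$-Selmer group over $k(t)$, and the analogue of the argument above — integral points map to classes supported on the primes dividing $f$ (contributing the $\omega_3$ factor, since $f_5^5$ and the $f_1$-part again get absorbed modulo cubes/in the relevant quotient), and each fiber of the descent map meets $E(k[t])$ in boundedly many points — yields $\#E(k[t])\le 2^{\omega_3}(3^{\rk E(k(t))+1}-1)$. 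When $\zeta_3\in k$, the curve acquires a $3$-isogeny defined over $k(t)$ and one descends via the two dual isogenies $\phi\colon E\to E'$ and $\hat\phi\colon E'\to E$; the combined $\phi$- and $\hat\phi$-Selmer groups have total size roughly $3^{\frac12\rk E(k(t))+2}$ (the $\frac12$ because each isogeny sees "half" the rank, the $+2$ absorbing torsion and the unit contributions), giving the second case. The final uniform bound $2^{\omega_3}(3^{d+1}-3)$ follows from $\rk_\Z E(k(t))\le 2d-2$ together with $\omega_3\le d_3\le d$ and a short case check that both branches are dominated by this expression.

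The main obstacle, and the part requiring the most care, is establishing that the descent maps are genuinely \emph{finite-to-one on $E(k[t])$ with an explicit fiber bound}, and pinning down exactly which "bad primes" survive in the image of an integral point. The first point is where the function-field setting is essential: one needs a clean statement that within any coset of $2E(k(t))$ (resp.\ $3E(k(t))$, or of $\phi(E'(k(t)))$), the integral points have bounded height and hence there are at most two of them up to sign — this replaces the quantitative diophantine input of the number-field case and must be proven using the explicit height machinery on the elliptic surface (canonical height versus degree of $x$). The second point — that the class $x-\xi$ of an integral point is supported only at $f_2,f_4$ (resp.\ $f_3$) — is a localized valuation computation at each place of the cubic (resp.\ the relevant) $k(t)$-algebra, and one must check that the contributions at the primes dividing $f_1$, $f_4^4$ (resp.\ $f_1$, $f_5^5$) vanish in the appropriate quotient by $n$-th powers; the exponents $4$ and $5$ (being $\equiv 0,\pm1 \bmod 2$ or $3$ as needed) are exactly what makes this work, and getting the exact power of $2$ (resp.\ $2^{\omega_3}$) and the precise constants $-2$, $-3$ requires tracking the trivial/torsion classes through the whole computation rather than just an order-of-magnitude estimate.
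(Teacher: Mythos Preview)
Your overall strategy---$2$-descent for part~(i), $3$-descent via the isogeny $\lambda$ for part~(ii), followed by a rank bound from the geometry of the elliptic surface---matches the paper. However, there is a genuine gap in how you account for the factor $2^{\omega_2+\omega_4+1}$ (and $2^{\omega_3}$), and relatedly in how you bound the fibers.

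You locate $\omega_2+\omega_4$ in the \emph{image}: ``the constraint that $P$ is integral forces $x-\xi$ to be supported only at the primes dividing $f_2$ and $f_4$.'' In the paper this is not where the factor arises. Under the hypotheses of~(i) (namely $f_3=1$ and $3\nmid d$) the divisor $D_2$ is empty, so the target of the $2$-descent map is simply $\Pic(C_2)[2]$ with no extra support conditions. The factor $2^{\omega_2+\omega_4}$ instead measures the failure of the map $E(k[t])\to E(k(t))/2E(k(t))$ to be $2$-to-$1$ on points of \emph{large} height: one must additionally record the datum $\gcd(x(t),f_2f_4)$, which can take $2^{\omega_2+\omega_4}$ values. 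The same remark applies to $\omega_3$ in part~(ii), where one tracks $\gcd(y(t),f_3)$.

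Your proposed fiber argument (``doubling strictly increases height beyond a point, so each coset of $2E$ contains at most $\{P,-P\}$'') is too optimistic and is not what the paper does. The paper splits $E(k[t])$ into height ranges---$h\le d/6$ and $h>d/6$ for~(i), with the latter split again into $(d/6,d/2]$ and $(d/2,d-1]$ using Davenport's inequality---and on each range argues as follows: if $P_0,P_1$ have the same image then $(x_0(t)-x)(x_1(t)-x)=\psi^2$ for some $\psi\in k(C_2)$ regular outside infinity; expanding $\psi=c+bx+a\,x^2/(f_2f_4^2)$ in the explicit $k[t]$-basis of the integral closure (Lemma~\ref{RRlem}) and comparing pole orders at infinity forces $a=0$, whence $P_0=\pm P_1$. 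For large heights this only works after fixing $\gcd(x_i,f_2f_4)$ and assuming $h(P_0)<3h(P_1)$, which is exactly why one needs the extra $2^{\omega_2+\omega_4}$ and the two sub-ranges. Without this explicit function-theoretic analysis on $C_2$ (resp.\ $C_3$), a pure canonical-height argument does not suffice to pin the fibers down to $\{P,-P\}$.
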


The first item is obtained by performing $2$-descent, while the second uses $3$-descent. The statements of Theorem~\ref{intbound} are extracted from \S{}\ref{subsect:2descent}, Theorem~\ref{thm:main},  and \S{}\ref{subsect:3descent}, Theorem~\ref{thm:main-3}.

For small values of $d$, the bounds above are close to being optimal.  For instance, when $f$ is square-free and quadratic ($d=2$), the bound in Theorem~\ref{intbound} (i) asserts that $\#E(k[t])\leq 14$, and an easy argument (Remark~\ref{d2rem}) refines this bound to $\#E(k[t])\leq 12$.  On the other hand, if $k$ is algebraically closed of characteristic $0$, then it is known \cite[Th.~8.2]{Shioda} that $\#E(k[t])=12$.

If $K$ is a one-dimensional function field of characteristic $0$, and if $T$ is a finite set of places of $K$, then Hindry and Silverman \cite{HS} proved a function field version of a conjecture of Lang, giving a bound for the number of $T$-integral points on any non-constant elliptic curve $E$ over $K$ given by a $T$-minimal Weierstrass equation.  Their bound depends only on $\# T$, the genus of $K$, and the rank of $E(K)$.  In the situation of Theorem~\ref{intbound}, their result takes the form
\begin{align*}
\#E(k[t])\leq 144\cdot 10^{7.1 \rk_{\Z} E(k(t))}.
\end{align*}
Thus, in our restricted setting, Theorem~\ref{intbound} yields a notable quantitative improvement to the result of Hindry-Silverman (and extends it to positive characteristic $\neq 2,3$).  We note that the method used here is completely different from the technique used by Hindry and Silverman, which relied on a careful study of canonical heights.

In a complementary direction, when $d:=\deg f\leq 6$,  we study integral points of small height on $E$ in terms of the arithmetic of the curves $C_2$ and $C_3$ arising from the $2$ and $3$-descent computations for $E$ (see Theorem~\ref{corSh2} below). It is well-known that $d\leq 6$ is equivalent to the fact that $E$ yields a rational elliptic surface \cite[\S{}10]{Shioda2}.
In that case, the arithmetic genus $\chi$ of N{\'e}ron's minimal regular model $\Emin$ of $E$ is equal to~$1$.  We say that an integral point $P\in E(k[t])$ is \emph{exceptional} if $\hat{h}(P)>\chi=1$, where $\hat{h}$ is the canonical height on $E$. The terminology is justified by a result of Shioda \cite{Shioda} that for generic $f$ of degree $d\leq 6$, $E(k[t])$ does not contain any exceptional integral points.

For $f\in k[t]$ nonconstant and sixth-power free of degree $d\leq 6$, let $F(t,u)=u^6f(t/u)$ and let $F(t,u)=F_1F_2^2F_3^3F_4^4F_5^5$ where the $F_i$ are pairwise coprime separable binary forms of degree $\delta_i$.  We call the quintuple $(\delta_1,\ldots, \delta_5)$ the type of $f$. If $f$ has type $(\delta_1,\ldots, \delta_5)$, then $\Emin$ has $\delta_1,\ldots, \delta_5$ singular fibers of types $\mathrm{II}$, $\mathrm{IV}$, $\mathrm{I}_0^*$, $\mathrm{IV}^*$, $\mathrm{II}^*$, respectively (see Lemma~\ref{lem:badE}).

For each possible type of $f$, we give information on the number of nonexceptional integral points in $E(\kbar[t])$ with a $k(t)$-rational ($x$- or $y$-)coordinate.  In many cases, we give exact expressions in terms of the number of rational torsion points in certain subsets of an associated Picard group (see Section \ref{sec:EPic} for the definition of the Picard groups used).

\begin{thm}
\label{corSh2}
Let $k$ be a perfect field of characteristic not $2$ or $3$, and let $E$ be an elliptic curve over $k(t)$ defined by the Weierstrass equation $y^2=x^3+f(t)$, where $f\in k[t]$ is nonconstant of degree at most $6$.  If $f$ is not a perfect power in $\kbar[t]$, let $C_2$, $C_3$, and $C_3'$ be the smooth projective $k$-curves defined by the affine equations
\begin{align*}
C_2: x^3&=-f(t),\\
C_3: y^2&=f(t),\\
C_3': y^2&=-27f(t).
\end{align*}
\begin{enumerate}
\item If $f$ is of type $(1,0,0,0,1)$ then $E(\kbar(t))$ is trivial.
\item If $f$ is of type $(0,1,0,1,0)$ then $E(\kbar(t))=\{\infty,\pm (0,\sqrt{f})\}\cong\mathbb{Z}/3\mathbb{Z}$ for some choice of $\sqrt{f}\in \kbar[t]$.
\item If $f$ is of type $(2,0,0,1,0)$, then
\begin{align*}
\#\left\{P\in E(\kbar(t))\mid x(P)\in k[t], \hat{h}(P)=1/3\right\}&=\#\left\{P\in E(\kbar(t))\mid x(P)\in k[t], \hat{h}(P)=1\right\}\\
&=2(\#\Pic(C_2)[2]-1),\\
\#\left\{P\in E(\kbar(t))\mid y(P)\in k[t], \hat{h}(P)=1/3\right\}&=3(\#\Pic(C_3,\mathbb{Q}.D_3)[3]-1),\\
\#\left\{P\in E(\kbar(t))\mid y(P)\in k[t], \hat{h}(P)=1\right\}&=3(\#\Pic(C_3',\mathbb{Q}.D_3')[3]-1).
\end{align*}
\item If $f$ is of type $(0,0,2,0,0)$, then $E(\kbar(t))=\{\infty, (\sqrt[3]{f},0), (\zeta_3 \sqrt[3]{f},0), (\zeta_3^2 \sqrt[3]{f},0)\}\cong \mathbb{Z}/2\mathbb{Z}\oplus \mathbb{Z}/2\mathbb{Z}$, for some choice of $\sqrt[3]{f}\in \kbar[t]$ and $\zeta_3$ a primitive third root of unity .
\item If $f$ is of type $(1,1,1,0,0)$, then
\begin{align*}
\#\left\{P\in E(\kbar(t))\mid x(P)\in k[t], \hat{h}(P)=1/6\right\}&=\#\left\{P\in E(\kbar(t))\mid x(P)\in k[t], \hat{h}(P)=1/2\right\}\\
&=2(\#\Pic(C_2,\mathbb{Q}.D_2)[2]-1),\\
\#\left\{P\in E(\kbar(t))\mid y(P)\in k[t], \hat{h}(P)=1/6\right\}&=3(\#\Pic(C_3,\mathbb{Q}.D_3)[3]-1),\\
\#\left\{P\in E(\kbar(t))\mid y(P)\in k[t], \hat{h}(P)=1/2\right\}&=3(\#\Pic(C_3',\mathbb{Q}.D_3')[3]-1).
\end{align*}

\item If $f$ is of type $(3,0,1,0,0)$, then
\begin{align*}
\#\{P\in E(\kbar(t))\mid x(P)\in k[t], \hat{h}(P)=1/2\}&=2(\#\Pic(C_2,\mathbb{Q}.D_2)[2]-\#\Pic(C_2)[2]),\\
\#\{P\in E(\kbar(t))\mid y(P)\in k[t], \hat{h}(P)=1/2\}&=\#\{P\in E(\kbar(t))\mid y(P)\in k[t], \hat{h}(P)=1\}\\
&=3(\#\Pic(C_3)[3]-1),\\
\#\{P\in E(\kbar(t))\mid x(P)\in k[t], \hat{h}(P)=1\}&=2(\#\Pic(C_2)[2]-1)\cdot\\
&(\#\text{nontrivial $k$-rational cyclic subgroups of } \Pic(C_3)(\kbar)[3]).
\end{align*}
Furthermore,
\begin{align*}
\#\{P\in E(k[t])\mid \hat{h}(P)=1\}&=(\#\Pic(C_2)[2]-1)(\#\Pic(C_3)[3]-1).
\end{align*}
\label{1.2vi}
\item If $f$ is of type $(0,3,0,0,0)$ then
\begin{align*}
\#\left\{P\in E(\kbar(t))\mid x(P)\in k[t], \hat{h}(P)=1/3\right\}&=\#\left\{P\in E(\kbar(t))\mid x(P)\in k[t], \hat{h}(P)=1\right\}\\
&=2\#\{P\in \mathbb{P}^1(k)\mid F(P)=0\}\cdot \#\{\alpha\in k\mid \alpha^3=2a^2\Delta\},\\
\#\left\{P\in E(\kbar(t))\mid y(P)\in k[t], \hat{h}(P)=1/3\right\}&=3\#\{P\in \mathbb{P}^1(k)\mid F(P)=0\}\cdot \#\{\alpha\in k\mid \alpha^2=a\Delta\}\\
\#\left\{P\in E(\kbar(t))\mid y(P)\in k[t], \hat{h}(P)=1\right\}&=3\#\{P\in \mathbb{P}^1(k)\mid F(P)=0\}\cdot \#\{\alpha\in k\mid \alpha^2=-3a\Delta\}.
\end{align*}
where $a$ is the leading coefficient of $f$ and $\Delta$ is the discriminant of the largest monic squarefree divisor of $f$.

\item If $f$ is of type $(2,2,0,0,0)$, then
\begin{align*}
\#\{P\in E(\kbar(t))\mid x(P)\in k[t], \hat{h}(P)=1/3\}&=2\cdot \#\{\text{odd theta characteristics in }\Pic(C_2)\},\\
\#\left\{P\in E(\kbar(t))\mid y(P)\in k[t], \hat{h}(P)=1/3\right\}&=3\cdot\#\{P_1+P_3,P_1+P_4,P_2+P_3,P_2+P_4\}(k),\\
\#\{P\in E(\kbar(t))\mid x(P)\in k[t], \hat{h}(P)=2/3\}&=4(\#\{\text{even theta characteristics in }\Pic(C_2)\}-1),\\
\#\left\{P\in E(k(t))\mid \hat{h}(P)=2/3\right\}&= \#\{P_1,P_2,P_3,P_4\}(k)\cdot\\
&\quad\#\{\text{even theta characteristics in }\Pic(C_2)\}-1),\\
\#\{P\in E(\kbar(t))\mid x(P)\in k[t], \hat{h}(P)=1\}&=2\cdot \#\{\text{odd theta characteristics in }\Pic(C_2)\},\\
\#\left\{P\in E(\kbar(t))\mid y(P)\in k[t], \hat{h}(P)=1\right\}&=3\cdot\#\{P_1'+P_3',P_1'+P_4',P_2'+P_3',P_2'+P_4'\}(k),
\end{align*}
where
\begin{align*}
\{P_1,P_2,P_3,P_4\}= 
\begin{cases}
t^{-1}(\{f_2=0\}),\quad &\text{if $\deg f_2=2$}\\
t^{-1}(\{f_2=0\})\cup t^{-1}(\infty), \quad &\text{if $\deg f_2=1$},
\end{cases}
\end{align*}
$t(P_1)=t(P_2), t(P_3)=t(P_4)$, and $P_1',P_2',P_3',P_4'$ are analogously defined on $C_3'$.

\item If $f$ is of type $(4,1,0,0,0)$, then 
\begin{align*}
\#\{P\in E(\kbar(t))\mid x(P)\in k[t], \hat{h}(P)=2/3\}&=2(\#\{\text{odd theta characteristics in }\Pic(C_2)\}-1),\\
\#\{P\in E(\kbar(t))\mid y(P)\in k[t], \hat{h}(P)=2/3\}&=3(\#\Pic(C_3,\mathbb{Q}.D_3)[3]-\#\Pic(C_3)[3]),\\
\#\{P\in E(\kbar(t))\mid x(P)\in k[t], \hat{h}(P)=1\}&=2\cdot \#\{\text{even theta characteristics in }\Pic(C_2)\}.
\end{align*}

\item If $f$ is of type $(6,0,0,0,0)$, then 
\begin{align*}
\#\{P\in E(\kbar(t))\mid x(P)\in k[t], \hat{h}(P)=1\}&=2\cdot \#\{\text{odd theta characteristics in }\Pic(C_2)\},\\
\#\{P\in E(\kbar(t))\mid y(P)\in k[t],\hat{h}(P)=1\}&=3(\#\Pic(C_3)[3]-1).
\end{align*}
\end{enumerate}
\end{thm}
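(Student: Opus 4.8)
The plan is to play the explicit $2$- and $3$-descent of \S\ref{subsect:2descent} (Theorem~\ref{thm:main}) and \S\ref{subsect:3descent} (Theorem~\ref{thm:main-3}) against Shioda's description of the Mordell--Weil lattice of the rational elliptic surface $\Emin$, treating each type of $f$ in turn. Since $F$ is homogeneous of degree $6$, one has $\delta_1+2\delta_2+3\delta_3+4\delta_4+5\delta_5=6$, and solving this in non-negative integers produces exactly the ten types in the statement. For (i), (ii) and (iv), the trivial lattice $T=\langle O,F\rangle\oplus\bigoplus_v T_v$ already has full rank $10$ in $\NS$, so $E(\kbar(t))\cong\NS/T$ is the finite group claimed and its sections are read off the Weierstrass equation directly. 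Type (vii), where $f=af_2^2$ is a perfect square in $\kbar[t]$ but the three fibres of type $\mathrm{IV}$ leave Mordell--Weil rank $2$, is handled separately: the nonexceptional integral points are written out explicitly, their count expressed through the roots of $F$ in $\PP^1(k)$ together with a Kummer-type condition on the leading coefficient $a$ and the discriminant $\Delta$, both of which fall out of the group law.

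For the remaining types (iii), (v), (vi), (viii), (ix), (x), where $f$ is not a perfect power and $C_2,C_3,C_3'$ are available, I would proceed as follows. A point $P$ with $x(P)\in k[t]$ satisfies $y(P)^2=x(P)^3+f=\prod_{\zeta^3=1}\bigl(x(P)-\zeta x_{C_2}\bigr)$ over $k(C_2)$, so $x(P)-x_{C_2}$ represents a square class, which is its $2$-descent image; a point with $y(P)\in k[t]$ satisfies $x(P)^3=\bigl(y(P)-y_{C_3}\bigr)\bigl(y(P)+y_{C_3}\bigr)$ over $k(C_3)$, resp.\ over $k(C_3')$ for the $3$-isogenous curve $y^2=x^3-27f$, so $y(P)-y_{C_3}$ represents a cube class, its $3$-descent image. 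Then I would impose the height condition through Shioda's formula, which with $\chi=1$ reads $\hat h(P)=1+(P\cdot O)-\tfrac12\sum_v\contr_v(P)$; by Lemma~\ref{lem:badE} the local contributions at fibres of type $\mathrm{II},\mathrm{IV},\mathrm{I}_0^*,\mathrm{IV}^*,\mathrm{II}^*$ lie in $\{0\},\{0,\tfrac23\},\{0,1\},\{0,\tfrac43\},\{0\}$ respectively, so each prescribed value of $\hat h(P)$ forces a definite value of $(P\cdot O)$ --- hence of $\deg x(P)$ and $\deg y(P)$ --- and a definite list of non-identity components met by $P$. Unwinding the descent map subject to these constraints identifies its image with the object named in the theorem: the bare torsion group $\Pic(C_2)[2]$ or $\Pic(C_3)[3]$ when no non-identity component is met; the modified Picard group $\Pic(C_2,\Q.D_2)[2]$, $\Pic(C_3,\Q.D_3)[3]$ or $\Pic(C_3',\Q.D_3')[3]$ of \S\ref{sec:EPic}, which permits controlled poles along the ramification divisor, when a $\mathrm{IV}$ or $\mathrm{IV}^*$ component is met; and the set of odd, resp.\ even, theta characteristics of $C_2$, according to the parity of the $\mathrm{I}_0^*$ contribution, in types $(2,2,0,0,0)$, $(4,1,0,0,0)$ and $(6,0,0,0,0)$. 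The integer prefactors $2$, $3$, $4$ record the number of points of $E$ realizing a given descent class --- the sign of $y(P)$, the cube-root ambiguity of $x(P)$, and the fibres of $C_2\to\PP^1$ or $C_3\to\PP^1$ over the relevant torsion point --- while the formulas counting $E(k[t])$ directly (in (vi), (viii)) come from the observation that such a point is determined by the pair consisting of its $2$-descent class and its $3$-descent class. On the $3$-descent side, when $\zeta_3\notin k$ one must work with $k$-rational cyclic subgroups of $\Pic(C_3)(\kbar)[3]$ rather than $k$-rational points, which is what produces the shape of the count in (vi).

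Each of these identifications must be proved bijective. Injectivity is the easy direction: the kernel of the descent on points of bounded height lands in $E(k(t))[2]$ or $E(k(t))[3]$, which the fibre configuration determines and which contributes nothing at the heights in question. Surjectivity --- producing, from a given Picard-torsion class or theta characteristic of the prescribed type, an honest section of that height --- is the crux: one forms the associated line bundle on $C_2$ (resp.\ $C_3$), applies Riemann--Roch to recover the relation $y^2=x^3+f$ with the appropriate coordinate lying in $k[t]$, and checks that $\deg x$ and $\deg y$ come out exactly as the height formula demands, so that the resulting point is integral and nonexceptional. It is precisely here that $\deg f\le 6$ (equivalently $\chi=1$) and the hypothesis that $f$ is not a perfect power are used, to guarantee that the relevant linear systems have the expected dimension.

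I expect the main obstacle to be the third step of the second paragraph: matching, type by type and height by height, the descent image with the correct modified Picard group or theta-characteristic set. This requires, at each $\mathrm{IV}$, $\mathrm{I}_0^*$ and $\mathrm{IV}^*$ fibre, a careful local computation of how the component of $\Emin$ met by $P$ twists the square (resp.\ cube) class on $C_2$, $C_3$ or $C_3'$ by a ramification point, together with a parallel analysis at $t=\infty$ when $\deg f<6$; tracking the parity of the resulting twist --- which is what separates odd from even theta characteristics --- and deciding when a class already lies in the small group $\Pic(C_i)$ rather than only in $\Pic(C_i,\Q.D_i)$ is where essentially all of the case-by-case labour resides.
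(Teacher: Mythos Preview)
Your outline is broadly the paper's strategy --- descent maps played off against the Oguiso--Shioda classification of the Mordell--Weil lattice --- and your description of injectivity, surjectivity via Riemann--Roch, and the role of theta characteristics is accurate for the points of \emph{minimal} height in each case. But there is a real gap in how you propose to handle the second height level in cases (iii), (v), (vii), (viii): the equalities of the form $\#\{\hat h=1/3\}=\#\{\hat h=1\}$ for the $x$-rational points, and the passage from $C_3$ to $C_3'$ for the $y$-rational points. Your Riemann--Roch argument for surjectivity requires $n\le d/6+1/3$ (this is exactly the range in which the linear system on $C_2$ has the expected shape), and at height $3\mu$ you are typically outside this window; for example in type $(2,0,0,1,0)$ with $d=2$, height $1$ exceeds both $d/6+1/3=2/3$ and $d/3=2/3$, so neither the injectivity nor the surjectivity half of your mechanism applies.

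The paper closes this gap with a clean lattice-theoretic observation you do not mention: since the Mordell--Weil lattice $L$ is a $\Z[\zeta_3]$-lattice and $1-\sqrt{-3}=-2\zeta_3$, one has $v\equiv\sqrt{-3}\,v\pmod{2L}$ for every $v$, so multiplication by $\sqrt{-3}$ (realised by $\lambda^t\circ\psi$ where $\psi:E\to E'$ is the quadratic twist) carries $E_{\min}$ bijectively onto the set of points of height $3\mu$ while preserving the $2$-descent class. This single congruence gives all the ``height $\mu$ versus height $3\mu$'' equalities at once, and the analogous fact modulo $\sqrt{-3}L$ explains why the $y$-rational count at height $3\mu$ is governed by $C_3'$ rather than $C_3$. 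Two further differences in execution: the paper first uses a change-of-model lemma to transport any type to one with $f$ squarefree before running the Riemann--Roch argument, and for types $(1,1,1,0,0)$, $(0,3,0,0,0)$, $(2,2,0,0,0)$ it simply writes down all the nonexceptional points explicitly (the lattice tells you how many there are) and computes their descent images by hand, rather than attempting the local fibre-by-fibre analysis you describe as the main obstacle.
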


We note that Theorem~\ref{corSh2} refines and extends certain results of Bremner \cite{Bremner} ($k=\mathbb{Q}, \deg f\leq 3$), Shioda \cite[Th.~8.2]{Shioda} ($k=\kbar$, $f$ squarefree), and Elkies \cite{Elkies} ($\deg f=5,6$, $f$ squarefree), the last of whom already pointed out the connection with $2$ and $3$-descent maps, and with theta characteristics. See also work of Clebsch \cite{Clebsch1869} and van Geemen \cite{vGeemen1992}. It seems likely that the present methods can also be used to construct similar expressions for the rank of $E(k(t))$ in each of the cases of Theorem \ref{corSh2} (this rank was computed by Bremner \cite{Bremner} when $k=\mathbb{Q}$ and $\deg f\leq 3$; see also \cite{ALRM07} for an alternative approach using Nagao's conjecture to compute ranks in certain cases).

In order to illustrate these connections, let us clarify the earlier work of Bremner \cite{Bremner}, which in particular studied the set of integral points $E(\Q[t])$ when $f(t)$ is a squarefree cubic (Type $(3,0,1,0,0)$). In this case, let us detail how one can recover from Theorem~\ref{corSh2} the classification of nonexceptional integral points given by Bremner \cite[Theorem~1.2]{Bremner}. We first use Theorem~\ref{corSh2} to characterize when there is an integral point $P=(x(t),y(t))\in E(\Q[t])$ of height $\hat{h}(P)=\frac{1}{2}$ (equivalently, $\deg x=1, \deg y\leq 1$). If such a point exists, then clearly the leading coefficient $a$ of $f$ must be a perfect cube in $\Q$ (consistent with Theorem~\ref{corSh2}~\ref{1.2vi}, if this is not the case, $\Pic(C_2,\mathbb{Q}.D_2)[2]\setminus\Pic(C_2)[2]$ is empty). Suppose now that $a$ is a perfect cube. If $P\in E(\bar\Q[t]), \hat{h}(P)=\frac{1}{2}$, and $y\in \Q[t]$, then it follows easily from the fact $a$ is a perfect cube that after multiplying $x$ by an appropriate $3$rd root of unity,  $x\in \Q[t]$ as well. Then by Theorem~\ref{corSh2}, and since $\Q$ does not contain a primitive third root of unity, assuming $a$ is a perfect cube, we have
\begin{align*}
\#\{P\in E(\Q[t])\mid \hat{h}(P)=1/2\}=\frac{1}{3}\#\{P\in E(\bar\Q(t))\mid y(P)\in \Q[t], \hat{h}(P)=1/2\}=\#\Pic(C_3)[3]-1.
\end{align*}
The curve $C_3:y^2=f(t)$ is an elliptic curve, and it is well-known that $C_3$ has either $0$ or $2$ nontrivial $\mathbb{Q}$-rational $3$-torsion points. Thus, we see that 
\begin{align}
\label{Brem1}
\#\{P\in E(\Q[t])\mid \hat{h}(P)=1/2\}=
\begin{cases}
2 \quad &\text{ if }a\in\mathbb{Q}^{*3} \text{ and }C_3[3]\neq \{0\},\\
0 &\text{ otherwise}.
\end{cases}
\end{align}

For integral points of canonical height $1$, by Theorem~\ref{corSh2} we have a bijection
\begin{align*}
\{P\in E(\Q[t])\mid \hat{h}(P)=1\}\leftrightarrow (\Pic(C_2)[2]\setminus \{0\})\times(\Pic(C_3)[3]\setminus\{0\}).
\end{align*}

An easy calculation shows that $C_2:x^3=-f(t)$ is an elliptic curve which can be given by the Weierstrass equation $Y^2=X^3+16\Delta_f$, where $\Delta_f$ is the discriminant of $f$. Then $C_2$ has a single nontrivial $\mathbb{Q}$-rational $2$-torsion point if and only if $2\Delta_f$ is a perfect cube in $\mathbb{Q}$. Thus, we find
\begin{align}
\label{Brem2}
\#\{P\in E(\Q[t])\mid \hat{h}(P)=1\}=
\begin{cases}
2 \quad &\text{ if }2\Delta_f\in\mathbb{Q}^{*3} \text{ and }C_3[3]\neq \{0\},\\
0 &\text{ otherwise}.
\end{cases}
\end{align}

Using the above simple characterizations, one may parametrize the various possibilities, leading precisely to the (seemingly complicated) description of Bremner in \cite[Theorem~1.2]{Bremner} (excluding information on the exceptional integral points, which were also classified by Bremner). We carry out the explicit calculations in Remark~\ref{rmk:Bremner}.

When $k$ is algebraically closed, we obtain explicit formulas for the number of nonexceptional integral points using standard facts about the cardinalities of torsion subgroups of Picard groups, the genus formulas \eqref{eq:genusofC2} and \eqref{eq:genusofC3}, and the well-known formula for the number of odd (resp.~even) theta characteristics on a curve of genus $g$: $2^{g-1}(2^{g}-1)$ (resp.~$2^{g-1}(2^{g}+1)$).  This extends Theorem~8.2 of \cite{Shioda}, where these quantities were computed when $f$ is squarefree, $\deg f\leq 6$, and additionally all nonexceptional integral points were found when $\deg f\leq 4$ (``extra integral points" in \cite{Shioda}). Remarkably, when $k=\kbar$, our method allows one to precisely count sets of integral points in two distinct ways: as twice the number of elements in an associated $2$-torsion subset of a Picard group (expressed in powers of $2$) and  as three times the number of elements in an associated $3$-torsion subset of a Picard group (expressed in powers of $3$).  The most interesting cases are given in Table~\ref{table1}, where we have also included entries on the associated Mordell-Weil lattices. The last entry in Table~\ref{table1} (type $(6,0,0,0,0)$) is discussed extensively by Elkies \cite{Elkies}, who makes many further connections, including the connection with lattices discussed below, and the history and prior work of Clebsch and van Geemen mentioned previously.

\begin{table}
\caption{Some entries from Theorem~\ref{corSh2} over $\kbar$}
\label{table1}
\centering
\begin{tabular}{|c|c|c|c|c|c|}
\hline
Type & Mordell-Weil & $\hat{h}$ & $\#E(\kbar[t])_h$ or& $2\cdot\#$ Picard & $3\cdot$\# Picard\\
 & Lattice $L$& & $\#\{\mathbf{v}\in L\mid |\mathbf{v}|^2=2h\}$ & $2$-torsion subset & $3$-torsion subset\\
\hline
$(2,0,0,1,0)$ & $A_2^*$ & $\frac{1}{3}$ & $6$ & $2^3-2$ & $3^2-3$\\
\hline
$(3,0,1,0,0)$ & $D_4^*$ & $\frac{1}{2}$ & $24$ & $2^5-2^3$ & $3^3-3$\\
\hline
$(4,1,0,0,0)$ & $E_6^*$ & $\frac{2}{3}$ & $54$ & $2^6-2^3-2$ & $3^4-3^3$\\
 & 	 & $1$ & $72$ & $2^6+2^3$ & $3^4-3^2$\\
\hline
$(6,0,0,0,0)$ & $E_8$ & $1$ & $240$ & $2^8-2^4$ & $3^5-3$\\
\hline
\end{tabular}
\end{table}

The table yields the following identities:
\begin{align}
6&=2^3-2=3^2-3\label{unit1}\\
24&=2^5-2^3=3^3-3\label{unit2}\\
240&=2^8-2^4=3^5-3.\label{unit3}
\end{align}
and
\begin{align}
54&=2^6-2^3-2=3^4-3^3\label{unit4}\\
72&=2^6+2^3=3^4-3^2.\label{unit5}
\end{align}

The identities \eqref{unit1}--\eqref{unit3} were noted by Pillai in \cite{Pillai}, where it was conjectured that these were the only solutions to the exponential Diophantine equation
\begin{align*}
2^x-2^y=3^z-3^w
\end{align*}
in positive integers $x,y,z,w$ with $x>y$ and $z>w$.  Using estimates from linear forms in logarithms, Pillai's conjecture was proven by Stroeker and Tijdeman in \cite{ST}.   We obtain then an ``explanation" of Pillai's three identities, where we note that the numbers $6$, $24$, and $240$ arise both as the number of integral points of minimal height in $E(\kbar[t])$, where $E:y^2=x^3+f(t)$, $f\in \kbar[t]$ is squarefree and $\deg f=2,3, 5$ (or $6$), respectively, and as the number of vectors of minimal norm (in other words, the kissing number) of the lattices $A_2^*, D_4^*$, and $E_8^*=E_8$, respectively.  In fact, these numbers are precisely the kissing numbers in dimensions $2$, $4$, and $8$, respectively (i.e., the maximal possible kissing number in the respective dimensions; this is known, in these cases, for both lattice packings and arbitrary sphere packings).

The identity \eqref{unit5} gives a nontrivial solution in positive integers to the equation 
\begin{align*}
2^x+2^y=3^z-3^w,
\end{align*}
whose finitely many solutions were classified by Pillai \cite{Pillai}.  Finally, the identity \eqref{unit4} gives a solution in positive integers to the equation
\begin{align}
2^x-2^y-2^z=3^v-3^w>0. \label{unit6}
\end{align}
If we exclude the solutions which come from the identities \eqref{unit1}--\eqref{unit3} (i.e., when $y=z$ or $x\in \{y+1,z+1\}$), then \eqref{unit4} and the identity
\begin{align}
216&=2^8-2^5-2^4=3^5-3^3\label{unit7}
\end{align}
appear to yield the only solutions in positive integers to \eqref{unit6} (up to permuting $y$ and $z$).   The solutions to \eqref{unit6} can, in principle, be effectively computed using a (unpublished) method of Bennett (see \cite[p.~650]{levin14}).


\section{Extending Picard groups}
\label{sec:EPic}


Let $C$ be a smooth projective curve over a field $k$, and let $D=\sum_{i=1}^r P_i$ be a reduced divisor on $C$, where the $P_i$ are closed points. We define the group of divisors with rational coefficients above $D$ by letting
$$
\Div(C,\Q.D):=\Div(C\setminus D)\oplus\bigoplus_{i=1}^r \Q\cdot P_i,
$$
and we define $\Pic(C,\Q.D)$ by
$$
\Pic(C,\Q.D):=\Div(C,\Q.D)/\PDiv(C),
$$
where $\PDiv(C)$ is the usual group of principal divisors. It was shown in \cite{gillibert09} that this group is isomorphic to the group of $\Gm$-torsors for Kato's log flat topology on the curve $C$ endowed with the log structure attached to $D$. We refer to \cite[Section~3]{gillibert09} for further details, and proofs of the statements that we recall below. In fact, using the definition above, these statements have elementary proofs.

We have a short exact sequence
\begin{equation}
\label{eq:ratPic1}
\begin{CD}
0 @>>> \Pic(C) @>>> \Pic(C,\Q.D) @>\nu>> \bigoplus_{i=1}^r (\Q/\Z)\cdot P_i @>>> 0. \\
\end{CD}
\end{equation}

The usual degree map on Cartier divisors induces a degree map $\deg:\Pic(C,\Q.D)\to \Q$, which fits inside a commutative diagram
\begin{equation}
\label{eq:ratdeg}
\begin{CD}
\Pic(C,\Q.D) @>\nu>> \bigoplus_{i=1}^r (\Q/\Z)\cdot P_i \\
@V\deg VV @VV\textrm{sum} V \\
\Q @>>> \Q/\Z \\
\end{CD}
\end{equation}
where the map $\nu$ is that from \eqref{eq:ratPic1}, the horizontal bottom map is the natural one, and the right vertical map is the sum map $\sum_{i=1}^r a_i\cdot P_i\mapsto \sum_{i=1}^r a_i$.

Given $n>1$, the Snake Lemma allows us to deduce from \eqref{eq:ratPic1} an exact sequence
\begin{equation}
\label{eq:ratPictorsion}
\begin{CD}
0 \to \Pic(C)[n] \to \Pic(C,\Q.D)[n] @>\nu_n>> \bigoplus_{i=1}^r (\frac{1}{n}\Z/\Z)\cdot P_i \to \Pic(C)/n \to \Pic(C\setminus D)/n.
\end{CD}
\end{equation}

Torsion elements of $\Pic(C,\Q.D)$ have degree zero, and so it follows from \eqref{eq:ratdeg} that the image of the map $\nu_n$ is contained in the kernel of the sum map. Therefore, when $n=p$ is a prime, we are able to deduce from \eqref{eq:ratPictorsion} an upper bound
\begin{equation}
\label{eq:ratPicbound}
\dim_{\F_p} \Pic(C,\Q.D)[p] \leq \dim_{\F_p} \Pic(C)[p] + r -1,
\end{equation}
which is valid unless $r=0$, in which case the divisor $D$ is empty and $\Pic(C,\Q.D)=\Pic(C)$.

On the other hand, we have a Kummer-like exact sequence
\begin{equation}
\label{eq:kummerlog}
\begin{CD}
0 @>>> k^{\times}/n @>>> H^1(C\setminus D,\mu_n) @>\kappa>> \Pic(C,\Q.D)[n] @>>> 0, \\
\end{CD}
\end{equation}
which we shall now describe in more detail. The group $H^1(C\setminus D,\mu_n)$ can be identified as a subgroup of $H^1(k(C),\mu_n)=k(C)^\times/n$ as follows:
$$
H^1(C\setminus D,\mu_n)=\{h\in k(C)^\times/n ~|~ \forall v\in C\setminus D,\quad v(h)\equiv 0\pmod{n}\}.
$$
Then the map $\kappa$ in \eqref{eq:kummerlog} is simply given by
\begin{equation}
\label{eq:kappadef}
\begin{split}
\kappa:H^1(C\setminus D,\mu_n) &\longrightarrow \Pic(C,\Q.D)[n] \\
h\in k(C)^\times &\longmapsto \frac{1}{n}\divisor(h),
\end{split}
\end{equation}
and the exactness of \eqref{eq:kummerlog} can be proved by using this description.


\section{Heights}
\label{sec:heights}


Throughout this paper, we will assume the following hypotheses:

\begin{enumerate}\renewcommand{\labelenumi}{\theenumi}\renewcommand{\theenumi}{(Hyp~\arabic{enumi})}
  \item $k$ is a perfect field of characteristic not $2$ or $3$. \label{field_k}
    \item $E$ is the elliptic curve over $k(t)$ defined by the Weierstrass equation \label{curve_kt}
\begin{equation}
\tag{$E$}
y^2=x^3+f(t).
\end{equation}
  \item $f=f_1f_2^2f_3^3f_4^4f_5^5$ where the $f_i$ are pairwise coprime, separable polynomials in $k[t]$. We let $d_i:=\deg f_i$ and $d:=\deg f=d_1+2d_2+3d_3+4d_4+5d_5$. \label{f12}
  \item $f_1$ is nonconstant. \label{f1nonct}
\end{enumerate}


The height of a point on an elliptic curve measures, in some sense, the complexity of the point.  If $E$ is an elliptic curve over $K=k(t)$, given in some Weierstrass form, and $P=(x,y)\in E(K)$, then we can define the \emph{naive height} by
\begin{equation*}
h(P)=\max \left\{\frac{1}{2}\deg x,\frac{1}{3}\deg y\right\}.
\end{equation*}
Here, if $z\in k(t)$, $z=\frac{p}{q}$, $p,q\in k[t]$, $(p,q)=1$, then $\deg z=\max\{\deg p,\deg q\}$ (i.e., the degree of the corresponding rational function on $\mathbb{P}^1$).

A more fundamental notion of height on $E$ is given by the \emph{canonical height} $\hat{h}$.  This agrees with the naive height $h$ up to $O(1)$ and gives a quadratic form on $E(K)$, providing a link to the group structure on $E$.  While the naive height depends on the chosen Weierstrass equation for $E$, the canonical height is intrinsic to the curve $E$.  Following Tate, the canonical height may be simply defined as $\hat{h}(P)=\lim_{n\to \infty} 4^{-n}h(2^nP)$, where $h$ is some naive height for $E$.

We now explicitly compute the canonical height on the elliptic curves $E$ of interest here.
\begin{thm}
\label{thh}
Assume hypotheses~\ref{field_k} -- \ref{f12} above, and let $h$ (resp. $\hat{h}$) be the naive (resp. canonical) height on $E$. Given  $P=(x,y)\in E(k(t))$ we let, for $1\leq i\leq 5$,
\begin{equation*}
n_i:=\#\{\alpha\in \kbar\mid f_i(\alpha)=0, x(\alpha)=0 \}.
\end{equation*}
Then
\begin{equation*}
\hat{h}(P)=h(P)-\frac{1}{3}n_2-\frac{1}{2}n_3-\frac{2}{3}n_4.
\end{equation*}
\end{thm}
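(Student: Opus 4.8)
Since $\hat h(P)=\lim_{n\to\infty}4^{-n}h(2^nP)$ and the function
\[
q(P):=h(P)-\tfrac13 n_2(P)-\tfrac12 n_3(P)-\tfrac23 n_4(P)
\]
differs from $h(P)$ by at most the constant $\tfrac13\deg f_2+\tfrac12\deg f_3+\tfrac23\deg f_4$, uniformly in $P$, the theorem is equivalent to the functional identity $q(2P)=4q(P)$ for all $P\in E(k(t))$: granting it, $q(2^nP)=4^nq(P)$ by induction, whence $\hat h(P)=\lim_n 4^{-n}h(2^nP)=\lim_n 4^{-n}q(2^nP)=q(P)$ (the difference $h(2^nP)-q(2^nP)$ being bounded). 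So the proof reduces to $q(2P)=4q(P)$, which I would check one place at a time.

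Write $M$ for the set of places $v$ of $k(t)$, i.e.\ the closed points of $\PP^1_k$, and for $P=(x,y)$ set $\lambda_v(P):=\deg(v)\cdot\max\{0,-\tfrac12 v(x),-\tfrac13 v(y)\}$. Because $v(y^2)=v(x^3+f)$ at every finite place, one has $v(y)=\tfrac32 v(x)$ whenever $v(x)<0$; thus at finite places the poles of $x$ and $y$ are ``aligned'', $\lambda_v(P)=\deg(v)\max\{0,-\tfrac12 v(x)\}$, and a short computation shows $h(P)=\sum_{v}\lambda_v(P)$ --- the ``$\max$'' in the definition of $h$ only ever matters at $v=\infty$. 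Similarly $n_i(P)=\sum_{\pi\mid f_i}\deg(\pi)\,[\,v_\pi(x)\ge 1\,]$, since an irreducible factor $\pi\mid f_i$ has $\deg\pi$ geometric roots, all of which are zeros of $x$ exactly when $v_\pi(x)\ge1$. Hence $q(P)=\sum_v\tilde\lambda_v(P)$, where $\tilde\lambda_v=\lambda_v$ off $f_2f_3f_4$ and $\tilde\lambda_v=\lambda_v-\tfrac1{c_i}\deg(v)[\,v(x)\ge1\,]$ for $v\mid f_i$, with $c_2=3$, $c_3=2$, $c_4=\tfrac32$.

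Next I would establish the behaviour of $\tilde\lambda_v$ under duplication. For the curve $y^2=x^3+f$,
\[
x(2P)=\frac{x(x^3-8f)}{4(x^3+f)},\qquad y(2P)=\frac{x^6+20fx^3-8f^2}{8\,y\,(x^3+f)},
\]
and, using $x^3+f=y^2$, one reads off $v(x(2P))$ and $v(y(2P))$ from $v(x),v(y),v(f)$. At a place of good reduction ($v\nmid f$, $v\ne\infty$), a case check on the sign of $v(x)$ gives $\tilde\lambda_v(2P)=\lambda_v(2P)=4\lambda_v(P)+v(y)\deg(v)$, the relation also satisfied by Néron's local heights. At a bad place I use Lemma~\ref{lem:badE}: over a root of $f_i$ one has $v(f)=i$, and analysing the cancellations in the two fractions above --- all of which, one checks, occur only at places dividing $f$ --- gives $\tilde\lambda_v(2P)=4\tilde\lambda_v(P)+v(y)\deg(v)+\eta_v(P)$ with a bounded correction $\eta_v(P)$ depending only on $i\bmod6$ and on whether $v(x)\ge1$, i.e.\ on the component of the fibre of $\Emin$ met by the section $\overline P$. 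The fibre at $\infty$ is brought into the same picture by $s=1/t$, $X=s^{2n}x$, $Y=s^{3n}y$ with $n=\lceil d/6\rceil$, which replaces ``$v(f)$'' there by $(-d)\bmod6$; here the $y$-term in $\lambda_\infty$ is exactly what is needed for the accounting to close up. Summing over all $v$ and using the product formula $\sum_v v(y)\deg(v)=0$, the identity $q(2P)=4q(P)$ becomes $\sum_v\eta_v(P)=0$, which I would verify directly: $\eta_v$ vanishes for Kodaira types $\mathrm{II}$ and $\mathrm{II}^*$, and for types $\mathrm{IV}$, $\mathrm{I}_0^*$, $\mathrm{IV}^*$ the contributions attached to $P$ and to $2P$ cancel.

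The crux is this final, explicit analysis at the additive fibres and at $\infty$: at a root of $f_i$ one must run through the three cases $v(x)>0$, $v(x)=0$, $v(x)<0$, track the resulting cancellations in $x(2P)$ and $y(2P)$, and see that the net change in $\lambda_v$ and in $n_i$ is always governed by the single weights $\tfrac13,\tfrac12,\tfrac23$; and one must check that at $\infty$ the non-minimality of $y^2=x^3+f$ is compensated precisely by the maximum defining $h$. A less self-contained alternative is to compute $\hat h(P)$ from Shioda's height-pairing formula on $\Emin$ and to match $h(P)$ with the corresponding combination of $\chi(\Emin)$, the intersection number $(\overline P\cdot\overline O)$, and the components of $\Emin$ met by $\overline P$; the behaviour at $\infty$ is the delicate point there as well. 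Finally, since $\hat h$, $h$ and the $n_i$ are unchanged by base change to $\kbar$, one may assume throughout that $k$ is algebraically closed, which makes the fibre--component bookkeeping transparent.
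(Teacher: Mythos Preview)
Your strategy is sound but different from the paper's. The paper does not pass through Tate's limit definition at all: it invokes Shioda's intersection-theoretic formula $\hat h(P)=\chi(\Emin)+(P).(O)-\tfrac12\sum_v\contr_v(P)$ directly, reads off $\tfrac12\contr_v(P)=\tfrac13,\tfrac12,\tfrac23$ at the fibres of type $\mathrm{IV},\mathrm{I}_0^*,\mathrm{IV}^*$ (which accounts for the $n_2,n_3,n_4$ terms), and then proves the single identity $h(P)=\chi(\Emin)+(P).(O)-\tfrac12\contr_\infty(P)$ by an explicit computation of $(P).(O)$ above $\infty$ after the change $t\mapsto 1/t$. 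You yourself mention this route at the end as the ``less self-contained alternative''; it is in fact what the paper does, and it is shorter because it avoids the place-by-place duplication analysis entirely.

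Your approach---showing that $q(P)=h(P)-\tfrac13 n_2-\tfrac12 n_3-\tfrac23 n_4$ satisfies $q(2P)=4q(P)$ via local contributions $\tilde\lambda_v$---amounts to verifying that your $\tilde\lambda_v$ agree (up to additive constants) with N\'eron's local heights. This is correct, and your decomposition $h(P)=\sum_v\lambda_v(P)$ checks out. Two comments on the execution: first, at the finite bad fibres your $\eta_v$ is not merely ``bounded'' with cancellations between $P$ and $2P$---one can check case by case (using $v(x(2P))=v(x)$ for types $\mathrm{IV},\mathrm{IV}^*$ when $v(x)\ge1$, and $v(x(2P))\le0$ for type $\mathrm{I}_0^*$ when $v(x)=1$) that $\eta_v(P)=0$ identically, so the local duplication formula $\tilde\lambda_v(2P)=4\tilde\lambda_v(P)+v(y)\deg(v)$ holds exactly at every finite place. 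Second, the genuine work is at $\infty$, where your $\tilde\lambda_\infty=\lambda_\infty$ carries no explicit correction and you must show that the $\max$ over $x$ and $y$ in the definition of $\lambda_\infty$ already encodes $-\tfrac12\contr_\infty(P)$; this is precisely the content of the paper's identity $h(P)=\chi+(P).(O)-\tfrac12\contr_\infty(P)$, and its proof requires the same case analysis on $d\bmod 6$ and on which of $\tfrac12\deg a,\tfrac13\deg b,\deg e+\tfrac{d}{6}$ is maximal. So your approach does not sidestep the delicate point---it relocates it---and completing your argument at $\infty$ would reproduce the paper's computation in a different guise.
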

Since it is easily seen that $n_1=n_5=0$, this may be rewritten more compactly as $\hat{h}(P)=h(P)-\frac{1}{6}\sum_{i=1}^5in_i$.
\begin{cor}
Under the same hypotheses as Theorem~\ref{thh}, if $f$ is square-free then $\hat{h}=h$, i.e., the canonical and naive heights agree.
\end{cor}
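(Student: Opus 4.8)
The plan is to obtain the corollary as an immediate specialization of Theorem~\ref{thh}: I will show that square-freeness of $f$ forces the three correction terms in the height formula to vanish identically. First I would translate the hypothesis ``$f$ square-free'' into a statement about the factorization $f=f_1f_2^2f_3^3f_4^4f_5^5$ of hypothesis~\ref{f12}. Since the $f_i$ are pairwise coprime and separable, the irreducible factors of $f$ (over $k$, or over $\kbar$) are exactly the irreducible factors of the various $f_i$, and an irreducible factor of $f_i$ divides $f$ with multiplicity precisely $i$. Hence $f$ is square-free if and only if no $f_i$ with $i\geq 2$ is nonconstant, i.e. $d_2=d_3=d_4=d_5=0$ and $f_2f_3f_4f_5\in k^{\times}$. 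In particular $f_2$, $f_3$ and $f_4$ then have no zero in $\kbar$.

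Given this, fix an arbitrary affine point $P=(x,y)\in E(k(t))$. For each $i\in\{2,3,4\}$ the set $\{\alpha\in\kbar\mid f_i(\alpha)=0\}$ is empty, so a fortiori $n_i=\#\{\alpha\in\kbar\mid f_i(\alpha)=0,\ x(\alpha)=0\}=0$. Substituting into Theorem~\ref{thh} yields $\hat{h}(P)=h(P)-\tfrac{1}{3}n_2-\tfrac{1}{2}n_3-\tfrac{2}{3}n_4=h(P)$. Since $\hat{h}$ and $h$ both vanish at the point at infinity of $E(k(t))$, we conclude $\hat{h}(P)=h(P)$ for every $P\in E(k(t))$, i.e. the canonical and naive heights coincide as functions on $E(k(t))$.

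I do not expect any genuine difficulty here: once Theorem~\ref{thh} is available the corollary is a one-line substitution, and the only step warranting an explicit argument is the combinatorial observation that pairwise-coprimeness of the $f_i$ makes ``$f$ square-free'' equivalent to ``$f_2,\dots,f_5$ constant''.
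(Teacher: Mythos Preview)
Your proposal is correct and matches the paper's approach: the paper does not even give a separate proof of this corollary, treating it as an immediate consequence of Theorem~\ref{thh}, which is exactly what you do by observing that square-freeness forces $f_2,f_3,f_4,f_5$ to be constants and hence $n_2=n_3=n_4=0$.
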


\begin{proof}[Proof of Theorem~\ref{thh}]
Let $\Emin\to \PP^1_k$ be the minimal regular model of $E$, whose zero section we denote by $O$. Abusing notation, we denote by $P$ the section of $\Emin$ corresponding to the point $P$. Then, according to \cite[Theorem~8.6]{Shioda2}, the canonical height of $P$ is given by the formula
$$
\hat{h}(P) = \frac{1}{2} \langle P,P\rangle = \chi(\Emin) + (P).(O) - \frac{1}{2}\sum_{\textrm{bad}~v} \contr_v(P)
$$
where $\langle P,P\rangle$ is the N{\'e}ron-Tate height pairing on $\Emin$, $\chi(\Emin)$ is the arithmetic genus of $\Emin$, and $\contr_v(P)$ is a local contribution of $P$ at a place $v$ of bad reduction. This local contribution is zero if and only if $P$ meets the identity component of the fiber of $\Emin$ above $v$ (equivalently, $P$ has good reduction at $v$).

According to Lemma~\ref{lem:badE}, the bad places of $\Emin$ are those dividing $f$ and possibly $\infty$. The following table gives the reduction type of $\Emin$ at a place $v$ dividing $f$, and the local contribution of a point $P$ which has bad reduction at $v$, extracted from \cite[table~(8.16)]{Shioda2}
\begin{center}
\begin{tabular}{c|cccccc}
 $v$ divides & $f_1$ & $f_2$ & $f_3$ & $f_4$ & $f_5$ \\
 \hline
 reduction type at $v$ & $\mathrm{II}$ & $\mathrm{IV}$ & $\mathrm{I}_0^*$ & $\mathrm{IV}^*$ & $\mathrm{II}^*$ \\
 $\contr_v(P)$ for bad $P$  & 0 & 2/3 & 1 & 4/3 & 0 \\
\end{tabular}
\end{center}
It follows immediately that
$$
\frac{1}{2}\sum_{\textrm{bad}~v\neq\infty} \contr_v(P) = \frac{1}{3}n_2 + \frac{1}{2}n_3 + \frac{2}{3}n_4.
$$
So, in order to prove the statement, it remains to prove that the naive height satisfies
\begin{equation}
\label{eq:height_agree}
h(P) = \chi(\Emin) + (P).(O) - \frac{1}{2}\contr_\infty(P).
\end{equation}
Firstly, we have
$$
\chi(\Emin) = \left\lceil\frac{\deg f}{6}\right\rceil.
$$
Since $k[t]$ is a principal ideal domain, one can write
$$
P = (x,y) = \left(\frac{a}{e^2},\frac{b}{e^3}\right) \qquad \text{with~} a,b,e \in k[t] \text{~and~} \gcd(e,ab)=1.
$$
Since $f$ is sixth-power free, the Weierstrass equation $y^2=x^2+f$ is minimal at all places, except possibly at $\infty$. It follows that $P$ meets $O$ above some place $v\neq \infty$ if and only if $v$ divides $e$, in which case the intersection multiplicity is $\ord_v(e)$. So, the intersection number $(P).(O)$ is obtained by adding $\deg e$ with the intersection multiplicity above infinity.
In order to compute the latter, we consider the Weierstrass equation  $E':y^2=x^3+t^{6\chi}f(\frac{1}{t})$ and the point $P'=(t^{2\chi}x(\frac{1}{t}),t^{3\chi}y(\frac{1}{t}))$ on $E'$ corresponding to $P$. It now suffices to compute the intersection $(P').(O)$ at $t=0$.

One can write
$$
P'=\left(t^{2\chi}\frac{a(\frac{1}{t})}{e(\frac{1}{t})^2},t^{3\chi}\frac{b(\frac{1}{t})}{e(\frac{1}{t})^3})\right) = \left(\frac{U}{t^{\deg a - 2\deg e -2\chi}},\frac{V}{t^{\deg b - 3\deg e -3\chi}}\right)
$$
where $U$ and $V$ are $t$-adic units. It follows that the denominator $e'$ of $P'$ has $t$-valuation
$$
\max\left\{\left\lceil\frac{1}{2}\deg a\right\rceil - \deg e -\chi, \left\lceil\frac{1}{3}\deg b\right\rceil - \deg e -\chi, 0\right\}.
$$
Summing up everything, we obtain
\begin{equation}
\label{eq:max}
\chi + (P).(O) = \max\left\{\left\lceil\frac{1}{2}\deg a\right\rceil, \left\lceil\frac{1}{3}\deg b\right\rceil, \deg e + \chi \right\}.
\end{equation}
We deduce from the relation $b^2=a^3+fe^6$ that among the three polynomials $b^2$, $a^3$, $fe^6$, at least two of them have the same degree, hence two of the three following quantities agree:
\begin{equation}
\label{eq:maxbis}
\frac{1}{2}\deg a;\quad \frac{1}{3}\deg b;\quad \deg e + \frac{\deg f}{6}.
\end{equation}
If the maximum in \eqref{eq:maxbis} is achieved simultaneously by the first two quantities, then $P$ has good reduction at $\infty$, and $\frac{1}{2}\deg a$ and $\frac{1}{3}\deg b$ are integers. So, we obtain from \eqref{eq:max} that
$$
\chi + (P).(O)  - \frac{1}{2}\contr_\infty(P) = \chi + (P).(O) = \max\left\{\frac{1}{2}\deg a, \frac{1}{3}\deg b\right\} = h(P),
$$
which proves \eqref{eq:height_agree}.
Now, if the maximum in \eqref{eq:maxbis} is achieved by the last quantity, it means that $P$ has bad reduction at $\infty$, and that either $\frac{1}{2}\deg a=\deg e + \frac{\deg f}{6}$ or $\frac{1}{3}\deg b = \deg e + \frac{\deg f}{6}$. In each case, one can check from Lemma~\ref{lem:badE} and \cite[table~(8.16)]{Shioda2} that
$$
\frac{1}{2}\contr_\infty(P) = \left\lceil\frac{\deg f}{6}\right\rceil- \frac{\deg f}{6}  = \chi-\frac{\deg f}{6},
$$
which, combined with \eqref{eq:max}, yields \eqref{eq:height_agree}.
\end{proof}

The following Lemma describes the bad fibers of N{\'e}ron's minimal regular model $\Emin\to\PP^1$ of the elliptic curve defined by the equation $(E)$ above.

\begin{lem}
\label{lem:badE}
Over $\overline{k}$, the places of bad reduction of $\Emin\to \PP^1$ are the zeroes of $f$, and possibly the point at infinity. The reduction types are as follows:
\begin{center}
\begin{tabular}{c|cccccc}
 $v$ divides & $f_1$ & $f_2$ & $f_3$ & $f_4$ & $f_5$ \\
 \hline
 reduction type at $v$ & $\mathrm{II}$ & $\mathrm{IV}$ & $\mathrm{I}_0^*$ & $\mathrm{IV}^*$ & $\mathrm{II}^*$ \\
\end{tabular}
\end{center}
At infinity, the reduction type depends on the congruence class of $d$ modulo $6$ as follows:
\begin{center}
\begin{tabular}{c|cccccc}
 $d \pmod{6}$ & $0$ & $1$ & $2$ & $3$ & $4$ & $5$ \\
 \hline
 reduction type at $\infty$ & good & $\mathrm{II}^*$ & $\mathrm{IV}^*$ & $\mathrm{I}_0^*$ & $\mathrm{IV}$ & $\mathrm{II}$ \\
\end{tabular}
\end{center}
\end{lem}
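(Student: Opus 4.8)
The plan is to compute the fiber of $\Emin$ over each closed point of $\PP^1_{\kbar}$ by applying Tate's algorithm to the Weierstrass equation $(E)$, rescaled appropriately near $t=\infty$. The one input needed is the standard fact that, over a discrete valuation ring of residue characteristic different from $2$ and $3$, the equation $y^2=x^3+A$ with $v(A)=m$ is already minimal as soon as $m\leq 5$ (its discriminant is $-432A^2$, so $v(\Delta)=2m<12$), and that in this range Tate's algorithm returns a Kodaira type depending only on $m$: good reduction for $m=0$, and $\mathrm{II},\mathrm{IV},\mathrm{I}_0^*,\mathrm{IV}^*,\mathrm{II}^*$ for $m=1,2,3,4,5$ respectively. (These five types can equally well be read off from \cite[table~(8.16)]{Shioda2}.)

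For a finite place $v$, the discriminant $-432f^2$ of $(E)$ shows that bad reduction can occur only when $v(f)>0$, i.e.\ at a zero of $f$. If such a $v$ lies over $f_i$, then Hypothesis~\ref{f12} — the $f_j$ are pairwise coprime and each $f_j$ is separable — gives $v(f_i)=1$ and $v(f_j)=0$ for $j\neq i$, hence $v(f)=i\leq 5$. So $(E)$ is already minimal at $v$, and the fact recalled above produces exactly the first table of the lemma.

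It remains to analyze the point at infinity. As in the proof of Theorem~\ref{thh}, set $s=1/t$ and pass to the isomorphic Weierstrass equation
$$
E':\quad Y^2=X^3+s^{6\chi}f(1/s),\qquad \chi:=\lceil d/6\rceil,
$$
over $\kbar[[s]]$. Since $6\chi\geq d$, the right-hand side is a polynomial in $s$, and its $s$-adic valuation at $s=0$ equals $6\chi-d$ (the relevant lowest-order term being $a\,s^{6\chi-d}$, with $a$ the leading coefficient of $f$, when $6\nmid d$, and the constant term $a$ when $6\mid d$); this valuation is $0$ if $6\mid d$ and $6-(d\bmod 6)\in\{1,\dots,5\}$ otherwise. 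In all cases $E'$ is minimal at $s=0$, and the fact recalled above gives good reduction when $d\equiv 0\pmod 6$ and the types $\mathrm{II}^*,\mathrm{IV}^*,\mathrm{I}_0^*,\mathrm{IV},\mathrm{II}$ when $d\equiv 1,2,3,4,5\pmod 6$ respectively — the second table.

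I do not expect any genuine difficulty: the argument is a routine run of Tate's algorithm, and since we work over the algebraically closed field $\kbar$ and $E$ has $j$-invariant $0$, no multiplicative reduction or twisting subtleties arise; Hypothesis~\ref{field_k} (characteristic $\neq 2,3$) is used essentially, both for the discriminant formula and for the minimality bound $v(\Delta)<12$. The only slightly delicate point is the bookkeeping at $\infty$ — verifying that $E'$ is the correct minimal model and that its constant term has valuation $6\lceil d/6\rceil-d$ — but this computation is already contained in the proof of Theorem~\ref{thh}.
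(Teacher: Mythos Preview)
Your proposal is correct and follows essentially the same approach as the paper: both invoke Tate's algorithm directly at the zeros of $f$, and at infinity both pass to the model $y^2=x^3+t^{6\chi}f(1/t)$ with $\chi=\lceil d/6\rceil$ and read off the type from the valuation $6\chi-d$. Your write-up is simply more explicit about the intermediate steps (the formula $v(f)=i$ from the coprimality and separability in \ref{f12}, and the valuation computation at $s=0$) than the paper's terse version.
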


\begin{proof}
The first table follows from Tate's algorithm \cite{tate75}. It remains to see what happens at infinity. For that, we should look at the  behavior at $t=0$ of the elliptic curve defined by the equation $y^2=x^3+f(\frac{1}{t})$. This equation can be rewritten as $y^2=x^3+t^{6\chi}f(\frac{1}{t})$ where $\chi=\left\lceil\frac{\deg f}{6}\right\rceil$ as previously. Using Tate's algorithm \cite{tate75}, one deduces the reduction type (over $\overline{k}$) at $t=0$, depending on the value of $d \pmod{6}$.
\end{proof}


\section{The $2$-descent map}


\begin{prop}
\label{prop:2descent}
Assume hypotheses~\ref{field_k} -- \ref{f12} above, and that $f$ is not a cube in $\kbar[t]$.
Let $C_2$ be the smooth, projective, geometrically integral $k$-curve defined by the affine equation:
\begin{equation}
\tag{$C_2$}
x^3=-f(t).
\end{equation}
Let us define a divisor $D_2\subset C_2$ by letting
$$
D_2:= t^{-1}(\{f_3=0\})\cup \left\{
\begin{array}{ll}
t^{-1}(\infty) & \text{if}~ d \equiv 3\pmod{6}\\
\emptyset & \text{otherwise}.
\end{array}\right.
$$

Then the map
\begin{align*}
\phi_2:E(k(t))/2E(k(t)) &\longrightarrow \Pic(C_2,\Q.D_2)[2] \\
(x_0,y_0) &\longmapsto \frac{1}{2}\divisor(x_0-x)
\end{align*}
is an injective morphism of groups.
\end{prop}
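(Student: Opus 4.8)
The plan is to establish Proposition~\ref{prop:2descent} by mimicking the classical $2$-descent on $E:y^2=x^3+f$ via the $x$-coordinate, using the ``rational Picard group'' machinery of Section~\ref{sec:EPic} as a receptacle for the descent map. First I would verify that $\phi_2$ lands where claimed: for $P=(x_0,y_0)\in E(k(t))$, the function $x_0-x$ on $C_2$ (where $x$ is the coordinate function with $x^3=-f(t)$) has a divisor that, away from $D_2$, has all valuations divisible by $2$, since on $C_2$ we have $(x_0-x)(\text{something}) = y_0^2-y^2$-type relations; more precisely $x_0-x$ divides $x_0^3 - x^3 = x_0^3 + f(t) = y_0^2$, so outside the support of $D_2$ and the poles, $\divisor(x_0-x)$ is $2$-divisible, placing $\tfrac12\divisor(x_0-x)$ in $\Pic(C_2,\Q.D_2)[2]$ via the map $\kappa$ of \eqref{eq:kappadef}. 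The contributions at the points over $\{f_3=0\}$ and over $\infty$ (when $d\equiv 3\bmod 6$) are exactly what necessitate enlarging to $\Q.D_2$: at a zero of $f_3$, the function $f$ has odd valuation $3$ (or $3$-adic behavior), forcing a half-integral coefficient, and this is why $D_2$ is defined as it is. I would check these local computations place-by-place using Lemma~\ref{lem:badE}'s classification of the fibers and the explicit factorization hypothesis~\ref{f12}.

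Next I would show $\phi_2$ is a group homomorphism. The key identity is the classical one: if $P_1+P_2+P_3=O$ on $E$ with $P_i=(x_i,y_i)$, then the three points $x=x_i$ on $C_2$ are cut out (together with points over $\infty$) by the line through $P_1,P_2,P_3$, i.e.\ there is a linear function $\ell(t)=\alpha t+\beta$ (the line $y=\ell$ in the Weierstrass model) such that $(x_1-x)(x_2-x)(x_3-x) = (\text{cube of something}) - \ell^2 = -(\ell^2 - x^3 - \ldots)$; concretely $\prod_i(x_i - x) \equiv \ell(t)^2 \pmod{\text{the equation of }E}$ when restricted appropriately, so $\sum_i \divisor(x_i - x) = \divisor(\ell^2 \cdot (\text{correction at }\infty))$, hence $\sum_i \tfrac12\divisor(x_i-x) = 0$ in $\Pic(C_2,\Q.D_2)$ after accounting for the divisor at infinity, which lies in the $\Q.D_2$ part and is principal up to the $\Q$-coefficients. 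I would handle the degenerate cases ($P_1=P_2$, $P_i=O$, $y_i=0$) by the usual limiting/ad hoc arguments, noting $\phi_2(O)=0$ and using that $f$ not a cube guarantees $C_2$ is geometrically integral and the function $x_0-x$ is not identically zero.

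For injectivity I would show the kernel of $\phi_2$ is $2E(k(t))$. Suppose $\phi_2(P)=0$, i.e.\ $\tfrac12\divisor(x_0-x)$ is principal in $\Pic(C_2,\Q.D_2)$; unwinding via \eqref{eq:kummerlog} this means $x_0 - x = c\cdot g^2$ for some $g\in k(C_2)^\times$ and $c\in k^\times$, and one shows $c$ can be absorbed (the torsor class being trivial forces, after possibly multiplying by the unit subgroup $k^\times/2$, that $x_0-x$ is a square in $k(C_2)^\times$ up to a constant that must itself be a square by a leading-coefficient/degree parity check using $f_1$ nonconstant). Then the standard argument produces a point $Q\in E(k(t))$ with $2Q=P$: the equation $x_0 - x = (\text{square})$ on $C_2$ translates, via the function field inclusion $k(t)\hookrightarrow k(C_2)$ and Galois descent for the $S_3$- or $\mu_3$-cover $C_2\to\PP^1_t$, into the existence of the halving point. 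The main obstacle I anticipate is precisely this last step: correctly managing the constant $c\in k^\times/2$ (i.e.\ ruling out the ``fake'' kernel coming from $H^0$) and performing the descent through the degree-$3$ cover $C_2/\PP^1_t$ rather than a degree-$2$ cover, which is what makes this genuinely different from the textbook elliptic-curve $2$-descent; the exact sequence \eqref{eq:kummerlog} is the right tool, but pinning down that the relevant cohomology class comes from $E(k(t))/2$ and not merely from $k^\times/2$ requires care with the nonconstant hypothesis~\ref{f1nonct} and the degree of $f$.
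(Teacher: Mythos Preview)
Your approach is genuinely different from the paper's. The paper does not carry out an elementary descent at all: it identifies $\phi_2$ as the composition $\kappa\circ h^1\omega\circ\delta$, where $\delta:E(k(t))/2\to H^1(k(t),E[2])$ is the Kummer map and $h^1\omega$ comes from the Weil pairing, citing \cite{BK1977} for the explicit description $(x_0,y_0)\mapsto x_0-x$. Well-definedness is obtained not by valuation-chasing but by invoking \cite{gl18a}: the Kummer map already factors through $H^1(\PP^1\setminus\Sigma_2,\Egp[2])$, where $\Sigma_2$ is the set of bad places with $2\mid\#\Phi_v(\kbar)$, and Lemma~\ref{lem:badE} identifies $t^{-1}(\Sigma_2)=D_2$. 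That $\phi_2$ is a homomorphism is then automatic, and injectivity is reduced to the injectivity of $h^1\omega\circ\delta$ (again from \cite{gl18a}) together with the exactness of \eqref{eq:kummerlog} and a norm argument killing the $k^\times/2$ contribution. So the paper's proof is essentially a sequence of citations; yours would be self-contained.

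Your homomorphism step is fine: the identity $(x_1-x)(x_2-x)(x_3-x)=(\alpha x+\beta)^2$ on $C_2$ when $P_1+P_2+P_3=O$ via the line $y=\alpha x+\beta$ is exactly right. There are, however, two points where your sketch is too loose. First, ``$x_0-x$ divides $y_0^2$'' does not by itself give even valuations; what you actually need is that $x_0-x$ and its cofactor $x_0^2+x_0x+x^2$ are coprime, which holds only away from $\{f=0\}\cup\{\infty\}$. Reducing from $\{f=0\}$ down to $\{f_3=0\}$ is not about $f$ having ``odd valuation $3$'' (it has odd valuation at zeros of $f_1$ and $f_5$ too), but about the ramification of $t:C_2\to\PP^1$: above zeros of $f_i$ with $i\neq 3$ the map is totally ramified ($e=3$), and combining this with the constraints $y_0^2=x_0^3+f$ forces $\ord_v(x_0-x)$ to be even; above zeros of $f_3$ (and $\infty$ when $d\equiv 3$) the map is unramified and no such constraint exists. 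This is exactly what the N\'eron-model component groups encode, and it is the substance of the place-by-place check you plan. Second, for injectivity, your norm argument correctly absorbs $c\in k^\times/2$ (since $N(c)=c^3$ and $3$ is odd), but the passage from $x_0-x=g^2$ in $k(C_2)^\times$ to $P\in 2E(k(t))$ is the heart of the matter and is not ``standard'' in this cubic-cover setting: you need the full $2$-descent criterion over $k(t)(E[2])$, which here is $k(\zeta_3)(C_2)$ when $\zeta_3\notin k$, and then a Galois-descent step back to $k(t)$. The paper bypasses this entirely by quoting the injectivity of $h^1\omega\circ\delta$ from \cite{gl18a}; if you want an elementary proof you will have to supply this argument in full.
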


\begin{rmq}
It follows from the Riemann-Hurwitz formula that the genus of $C_2$  is given by
\begin{equation}
\label{eq:genusofC2}
g(C_2)= \left\{
\begin{array}{ll}
d_1+d_2+d_4+d_5-2 & \text{if}~ 3\mid d \\
d_1+d_2+d_4+d_5-1 & \text{otherwise}.
\end{array}\right.
\end{equation}
\end{rmq}

By combining the Proposition above with the bound \eqref{eq:ratPicbound}, one obtains the following Corollary, which also follows easily from \cite[Theorem~1.1]{gl18a}.

\begin{cor}
\label{cor:2descent}
In the set-up of Proposition~\ref{prop:2descent}, we have 
\begin{equation}
\label{eq:rankbound1.1}
\rk_\Z E(k(t)) \leq \dim_{\F_2} \Pic(C_2)[2] + \sum_{v\in\PP^1} \varepsilon_v
\end{equation}
where
$$
\varepsilon_v:=\left\{
\begin{array}{ll}
2 & \text{if the fiber of $D_2$ at $v$ is the sum of three $k_v$-rational points} \\
1 & \text{if the fiber of $D_2$ at $v$ has exactly one $k_v$-rational point} \\
0 & \text{otherwise}.
\end{array}\right.
$$
It follows that
\begin{equation}
\label{eq:rankbound1.2}
\rk_\Z E(k(t)) \leq \left\{
\begin{array}{ll}
2\cdot\left(\sum d_i -2\right) & \text{if}\enspace 6\mid d \\
2\cdot\left(\sum d_i -1\right) & \text{otherwise}.
\end{array}\right.
\end{equation}
Moreover, when $\zeta_3\notin k$, this upper bound can be improved by a factor two.
\end{cor}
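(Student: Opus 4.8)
\emph{Proof proposal.}

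The plan is to control the image of the $2$-descent map $\phi_2$ of Proposition~\ref{prop:2descent} inside $\Pic(C_2,\Q.D_2)[2]$. Since $f_1$ is nonconstant, $f$ is not a sixth power, so $E$ is nonconstant and $E(k(t))$ is finitely generated (Lang--N\'eron); hence $\rk_\Z E(k(t))\le\dim_{\F_2}E(k(t))/2E(k(t))$, and by the injectivity in Proposition~\ref{prop:2descent} it suffices to bound the $\F_2$-dimension of $\mathrm{Im}(\phi_2)$.

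The first key point is that $\phi_2$ lands in the kernel of the pushforward map $N\colon\Pic(C_2,\Q.D_2)\to\Pic(\PP^1,\Q.t(D_2))$: writing $k(C_2)=k(t)[x]/(x^3+f)$, one computes $N_{k(C_2)/k(t)}(x_0-x)=x_0^3+f=y_0^2$, a square, so $N(\phi_2(x_0,y_0))=\tfrac12\divisor_{\PP^1}(y_0^2)=\divisor_{\PP^1}(y_0)$ is trivial. Set $G:=\ker N\subseteq\Pic(C_2,\Q.D_2)[2]$; since $\Pic(C_2)[2]=\Pic^0(C_2)[2]\subseteq\ker N$ (as $\mathrm{Jac}(\PP^1)=0$), the sequence \eqref{eq:ratPictorsion} at $n=2$ restricts to $0\to\Pic(C_2)[2]\to G\xrightarrow{\nu_2}\nu_2(G)\to 0$, so $\dim_{\F_2}\mathrm{Im}(\phi_2)\le\dim_{\F_2}G=\dim_{\F_2}\Pic(C_2)[2]+\dim_{\F_2}\nu_2(G)$. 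To bound $\nu_2(G)$ I would decompose its target over the closed points $v$ in the support of $D_2$ (the zeros of $f_3$, together with $\infty$ exactly when $d\equiv 3\pmod 6$; above each such $v$ the map $t$ is unramified of degree $3$, since $f$ vanishes there to order exactly $3$, so $D_2$ has $1$, $2$ or $3$ closed points above $v$, corresponding to the three cases defining $\varepsilon_v=0,1,2$). For $\xi\in G$ and such a $v$, the identity $\ord_v\circ N_{k(C_2)/k(t)}=\sum_{w\mid v}[k(w):k(v)]\ord_w$ shows that the $[k(w):k(v)]$-weighted sum of the components of $\nu_2(\xi)$ at the points above $v$ equals $\tfrac12\ord_v(N(h))=0$ in $\tfrac12\Z/\Z$ (here $\xi=\tfrac12\divisor(h)$ and $N(\xi)=0$); since $\sum_{w\mid v}[k(w):k(v)]=3$ is odd, this weighted-sum functional is nonzero, so the projection of $\nu_2(G)$ to the $v$-summand has dimension at most $(\#\{w\mid v\})-1=\varepsilon_v$. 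Summing gives $\dim_{\F_2}\nu_2(G)\le\sum_v\varepsilon_v$, hence \eqref{eq:rankbound1.1}. Inserting $\dim_{\F_2}\Pic(C_2)[2]\le 2g(C_2)$ with the genus formula \eqref{eq:genusofC2}, together with $\sum_v\varepsilon_v\le 2d_3$ (respectively $2d_3+2$ when $\infty\in\mathrm{supp}(D_2)$, i.e.\ $d\equiv 3\pmod 6$), and separating the cases $6\mid d$, $d\equiv 3\pmod 6$, $3\nmid d$, yields \eqref{eq:rankbound1.2}.

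For the last assertion, observe that $\mu_3$ acts on the pair $(C_2,D_2)$ over $k$ by scaling $x$, and that on the norm-zero subgroup the generator $\rho$ of $\mu_3(\kbar)$ satisfies $1+\rho^{*}+(\rho^{2})^{*}=0$ (this operator is the group-norm, which vanishes on $\ker N$). Thus $G_{\kbar}:=\ker N_{\kbar}$ is a module over $\Z[\zeta_3]$, hence, reducing mod $2$ where $2$ is inert, an $\F_4$-vector space. When $\zeta_3\notin k$, Galois conjugation exchanges $\rho^{*}$ and $(\rho^{2})^{*}$, so $\Gal(\kbar/k)$ acts $\F_4/\F_2$-semilinearly on $G_{\kbar}$; an $\F_2$-independent family of Galois-fixed vectors then stays $\F_4$-independent (because $a+\bar a\in\F_2$ for $a\in\F_4$), which forces $\dim_{\F_2}\mathrm{Im}(\phi_2)\le\dim_{\F_2}(G_{\kbar})^{\Gal(\kbar/k)}\le\tfrac12\dim_{\F_2}G_{\kbar}$. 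Running the estimate of the previous paragraph over $\kbar$ gives $\dim_{\F_2}G_{\kbar}\le 2g(C_2)+2d_3$ (or $+2$ when $d\equiv 3\pmod 6$), so $\dim_{\F_2}\mathrm{Im}(\phi_2)\le g(C_2)+d_3$ (resp.\ $+d_3+1$), which is exactly half of \eqref{eq:rankbound1.2}.

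The step I expect to be the main obstacle is the local refinement in the second paragraph: one must see that the constraint cutting $\nu_2(G)$ down to dimension $\sum_v\varepsilon_v$ is not the single global degree relation behind \eqref{eq:ratPicbound} but a separate relation at each $v$, and trace it to the square $N_{k(C_2)/k(t)}(x_0-x)=y_0^2$. Two further points need care: the bookkeeping of $\varepsilon_v$ over non-rational points of $\PP^1$, where residue degrees genuinely intervene; and, in the $\zeta_3\notin k$ case, checking that $\ker N$ has no $\mu_3$-fixed part (so the $\F_4$-structure is clean) and that $\mathrm{Im}(\phi_2)$ really injects into $(G_{\kbar})^{\Gal(\kbar/k)}$, which follows from the Kummer sequence \eqref{eq:kummerlog} together with Hochschild--Serre.
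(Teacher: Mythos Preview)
Your argument for \eqref{eq:rankbound1.1} and \eqref{eq:rankbound1.2} is correct and essentially coincides with the paper's proof. The paper also shows that $\mathrm{Im}(\phi_2)$ lies in the kernel of a ``multi-degree map'' $\Pic(C_2,\Q.D_2)[2]\to\bigoplus_v(\Q/\Z)\cdot v$ (one constraint per $v\in\Sigma_2$), justified only by the phrase ``one can define the $2$-descent map over the localization of $\PP^1$ at each $v$''. Your explicit computation $N_{k(C_2)/k(t)}(x_0-x)=y_0^2$ is a cleaner way to see the same thing, and the bookkeeping $\varepsilon_v=\#\{w\mid v\}-1$ matches the paper's formula exactly. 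The derivation of \eqref{eq:rankbound1.2} by case analysis on $d\bmod 6$ is identical.

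For the final assertion ($\zeta_3\notin k$), your route is genuinely different. The paper does not touch the $\mu_3$-action on $C_2$ at all; instead it works directly with the elliptic curve: the quadratic twist $E'$ of $E$ by $k(\zeta_3)/k$ is $3$-isogenous to $E$ over $k(t)$ (the isogeny $\lambda$ of Section~5), hence $\rk_\Z E'(k(t))=\rk_\Z E(k(t))$, and the standard decomposition $\rk_\Z E(k(\zeta_3,t))=\rk_\Z E(k(t))+\rk_\Z E'(k(t))$ gives $\rk_\Z E(k(t))=\tfrac12\rk_\Z E(k(\zeta_3,t))\le\tfrac12\rk_\Z E(\kbar(t))$, after which \eqref{eq:rankbound1.2} over $\kbar$ finishes the job. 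This is shorter and sidesteps the two delicate points you flagged (injectivity of $\mathrm{Im}(\phi_2)\hookrightarrow(G_{\kbar})^{\Gal}$ and the absence of $\mu_3$-invariants in $\ker N$). Your $\F_4$-semilinearity argument is a nice structural explanation of \emph{why} the factor of two appears on the Picard side, but the paper's isogeny trick gets the bound with less to verify.
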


\begin{rmq}
While the bound \eqref{eq:rankbound1.2} is none other than the geometric rank bound deduced from Igusa's inequality, the bound \eqref{eq:rankbound1.1} has a more arithmetic flavor. See the introduction of \cite{gl18a} for further details.
\end{rmq}

\begin{proof}[Proof of Proposition~\ref{prop:2descent}]
It is well known (see for example \cite[\S{}2]{BK1977}) that the composition of the cohomological maps defined in \cite[\S{}2.1]{gl18a}
$$
\begin{CD}
E(k(t))/2 @>\delta>> H^1(k(t),E[2]) @>h^1\omega>> H^1(k(C_2),\mu_2), \\
\end{CD}
$$
can be described as
\begin{align*}
E(k(t))/2 &\longrightarrow k(C_2)^\times/2\simeq H^1(k(C_2),\mu_2) \\
(x_0,y_0) &\longmapsto x_0-x,
\end{align*}
where $x$ is the $x$-coordinate map on $C_2$, given by the equation $x^3=-f(t)$. Let $\Egp\to \PP^1$ be N{\'e}ron's group scheme model of $E$, which is the smooth locus of $\Emin\to \PP^1$, and let $\Phi_v$ be the group of connected components of the special fiber of $\Egp$ at a given place $v$. The arguments at the beginning of the proof of \cite[Theorem~1.1]{gl18a} prove that the Kummer map $\delta$ has values in the subgroup $H^1(\PP^1\setminus \Sigma_2,\Egp[2])$, where $\Sigma_2$ is the set of places of bad reduction of $\Egp$ at which $\#\Phi_v(\overline{k})$ is divisible by $2$. It follows that the map $h^1\omega\circ \delta$ has values in $H^1(C_2\setminus t^{-1}(\Sigma_2),\mu_2)$.

According to Lemma~\ref{lem:badE}, the bad fibers at places dividing $f_3$ contribute to $\Sigma_2$, and the contribution at infinity occurs exactly when $d\equiv 3 \pmod{6}$, in which case $\Phi_\infty(\overline{k})\simeq (\Z/2)^2$. Therefore,
$$
t^{-1}(\Sigma_2)=D_2
$$
and the map $h^1\omega\circ \delta$ has values in $H^1(C_2\setminus D_2,\mu_2)$.

If follows from the discussion above that $\phi_2$ is none other than the composition of $h^1\omega\circ \delta$ with the map $\kappa$ defined in \eqref{eq:kappadef}. Therefore, $\phi_2$ is a morphism of groups. The injectivity of $\phi_2$ follows from the same argument as in the proof of \cite[Theorem~1.1]{gl18a}, the key ingredients being: the injectivity of $h^1\omega\circ \delta$, the exactness of the sequence \eqref{eq:kummerlog}, and the fact that $k^\times/2$ does not contribute to the kernel of the norm map.
\end{proof}

\begin{proof}[Proof of Corollary~\ref{cor:2descent}]
Let us prove \eqref{eq:rankbound1.1}. According to Proposition~\ref{prop:2descent}, $\phi_2$ is injective, with values in $\Pic(C_2,\Q.D_2)[2]$.
Let us compute the number of closed points of $D_2$. We note that $D_2=t^{-1}(\Sigma_2)$, where $\Sigma_2$ is as in the proof of Proposition~\ref{prop:2descent}.

The map $t:D_2\to \Sigma_2$ is {\'e}tale of degree $3$, from which it follows that the number of closed points of $D_2$ lying above a given $v\in\Sigma_2$ is given by the formula
$$
\#(D_2)_v=\left\{
\begin{array}{ll}
3 & \text{if}~ \# (D_2)_v(k_v)=3 \\
2 & \text{if}~ \# (D_2)_v(k_v)=1 \\
1 & \text{otherwise}.
\end{array}\right.
$$

Moreover, it follows from a local argument (in short: one can define the $2$-descent map over the localization of $\PP^1$ at each $v\in\Sigma_2$) that the image of the map $\phi_2$ lands in the kernel of the multi-degree map
$$
\Pic(C_2,\Q.D_2)[2] \longrightarrow \bigoplus_v (\Q/\Z)\cdot v,
$$
where each partial map computes the degree (mod $\Z$) of the fiber of the divisor above $v$.

Observing that $\varepsilon_v=\#(D_2)_v-1$, it follows from a variant of \eqref{eq:ratPicbound} that
$$
\rk_\Z E(k(t)) \leq \dim_{\F_2} \Pic(C_2)[2] + \sum_v \varepsilon_v.
$$

It remains to prove \eqref{eq:rankbound1.2}. If $6\mid d$, then $(D_2)_\infty=\emptyset$ and $\varepsilon_\infty=0$. On the other hand, the other $\varepsilon_v$ are bounded by $2$, and their number is bounded above by $d_3=\deg f_3$. Moreover, $g(C_2)=\sum_{i\neq 3} d_i-2$ according to \eqref{eq:genusofC2}. It follows from \eqref{eq:rankbound1.1} that
$$
\rk_\Z E(k(t)) \leq \dim_{\F_2} \Pic(C_2)[2] + \sum_v \varepsilon_v \leq 2\cdot (\sum_{i\neq 3} d_i-2) + 2d_3.
$$

If $d\equiv 3\pmod{6}$, then $\varepsilon_\infty$ is bounded by $2$, and $g(C_2)=\sum_{i\neq 3} d_i-2$, hence
$$
\rk_\Z E(k(t)) \leq 2\cdot (\sum_{i\neq 3} d_i-2) + 2d_3 +2.
$$

In any other case, $\varepsilon_\infty=0$ and $g(C_2)=\sum_{i\neq 3} d_i-1$, hence
$$
\rk_\Z E(k(t)) \leq 2\cdot (\sum_{i\neq 3} d_i-1) +2d_3.
$$
which proves the result.

Finally, if $\zeta_3\notin k$, the field $k(\zeta_3)$ is a quadratic extension of $k$, and we have
$$
\rk_\Z E(k(\zeta_3,t)) = \rk_\Z E(k(t)) + \rk_\Z E'(k(t)),
$$
where $E'$ denotes the twist of $E$ by the extension $k(\zeta_3)/k$. According to the discussion at the beginning of the next section, $E'$ is isogenous to $E$ over $k(t)$, hence $E$ and $E'$ have the same rank over $k(t)$. This implies that the rank of $E$ over $k(\zeta_3,t)$ is twice the rank over $k(t)$. The bound \eqref{eq:rankbound1.2} being valid over $\overline{k}(t)$, one deduces the result.
\end{proof}

\begin{rmq}
Corollary~\ref{cor:2descent} can be recovered from the third statement of \cite[Theorem~1.1]{gl18a}.
\end{rmq}


\section{The $3$-descent maps}


Before we proceed with $3$-descent, let us recall some well-known facts. Let $E'$ be the elliptic curve defined by the equation
\begin{equation}
\tag{$E'$}
y^2=x^3-27f(t).
\end{equation}
Then $E'$ is the twist of $E$ by the quadratic extension $k(\zeta_3)=k(\sqrt{-3})$, unless $\zeta_3$ belongs to $k$, in which case $E'$ is isomorphic to $E$. One checks that the map
\begin{align*}
\lambda:E &\longrightarrow E' \\
(x,y) &\longmapsto \left(\frac{x^3+4f}{x^2},\frac{y(x^3-8f)}{x^3}\right)
\end{align*}
is a $3$-isogeny, whose kernel is the subset of $E$ defined by the equation $x=0$. Consequently, the smooth compactification of $\ker(\lambda:\Egp\to\Egp')\setminus\{0\}$ is the projective curve $C_3$ defined by the affine equation $y^2=f(t)$ (where $\Egp$ and $\Egp'$ denote N{\'e}ron's group scheme models). This curve $C_3$ is geometrically integral, because $f$ is not a square in $\overline{k}(t)$ according to \ref{f1nonct}.

We denote by $\lambda^t:E'\to E$ the dual isogeny. By combining $\lambda$-descent and $\lambda^t$-descent, we shall prove the following.

\begin{prop}
\label{prop:3descent}
Assume hypotheses~\ref{field_k} -- \ref{f12}, and that $f$ is not a square in $\kbar[t]$.
Let $C_3$ and $C_3'$ be the smooth, projective, geometrically integral $k$-curves defined by the affine equations below:
\begin{align}
y^2&=f(t)\tag{$C_3$} \\
y^2&=-27f(t).\tag{$C_3'$}
\end{align}
We define a divisor $D_3\subset C_3$ by letting
$$
D_3:= t^{-1}(\{f_2f_4=0\})\cup \left\{
\begin{array}{ll}
t^{-1}(\infty) & \text{if}\enspace d\equiv 2 ~\text{or}~ 4\pmod{6}\\
\emptyset & \text{otherwise}
\end{array}\right.
$$
and the same formula defines a similar divisor $D_3'\subset C_3'$. Then:
\begin{enumerate}
\item[1)] We have a short exact sequence
$$
0 \longrightarrow E'(k(t))/\lambda E(k(t)) \longrightarrow E(k(t))/3E(k(t)) \longrightarrow  E(k(t))/\lambda^t E'(k(t)) \longrightarrow 0\\
$$
in which the left-hand side map is induced by $\lambda^t$.
\item[2)] The maps
\begin{align*}
\phi_3:E(k(t))/\lambda^t E'(k(t)) &\longrightarrow \Pic(C_3,\Q.D_3)[3] \\
(x_0,y_0) &\longmapsto \frac{1}{3}\divisor(y_0-y)
\end{align*}
and
\begin{align*}
\phi_3':E'(k(t))/\lambda E(k(t)) &\longrightarrow \Pic(C_3',\Q.D_3')[3] \\
(x_0,y_0) &\longmapsto \frac{1}{3}\divisor(y_0-y)
\end{align*}
are injective morphisms of groups.
\end{enumerate}
\end{prop}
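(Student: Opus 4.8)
The statement splits into two essentially independent pieces: part 1) is a formal consequence of the identity $\lambda^t\circ\lambda=[3]_E$ together with a small torsion computation, while part 2) runs parallel to the proof of Proposition~\ref{prop:2descent}. For part 1), the plan is to invoke the standard four-term isogeny-descent exact sequence attached to $\lambda^t\lambda=[3]_E$,
$$0\longrightarrow \frac{\ker(\lambda^t)(k(t))}{\lambda\bigl(E(k(t))[3]\bigr)}\longrightarrow \frac{E'(k(t))}{\lambda E(k(t))}\xrightarrow{\lambda^t}\frac{E(k(t))}{3E(k(t))}\longrightarrow\frac{E(k(t))}{\lambda^t E'(k(t))}\longrightarrow 0,$$
whose exactness is a diagram chase using $3E(k(t))=\lambda^t\lambda E(k(t))\subseteq\lambda^t E'(k(t))$; the only point to settle is therefore that the first term vanishes, for which it suffices to show $\ker(\lambda^t)(k(t))=0$. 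Since $\ker(\lambda^t)$ is the order-$3$ subgroup scheme $\{x=0\}$ of $E'\colon y^2=x^3-27f$ (this is why $C_3'\colon y^2=-27f$ compactifies $\ker(\lambda^t)\setminus\{0\}$), a nontrivial $k(t)$-rational point would be $(0,y_0)$ with $y_0\in k(t)$ and $y_0^2=-27f$; since $-27$ is a square in $\overline{k}$, this would force $f$ to be a square in $\overline{k}(t)$, contradicting \ref{f1nonct}. The same argument applied to $\lambda$ gives $\ker(\lambda)(k(t))=0$.

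For part 2), I would reproduce the three steps of the proof of Proposition~\ref{prop:2descent}, with $[2]$-descent replaced by $\lambda^t$-descent for $\phi_3$ and by $\lambda$-descent for $\phi_3'$; I describe $\phi_3$, the case of $\phi_3'$ being symmetric. First, compose the Kummer connecting map $E(k(t))/\lambda^t E'(k(t))\hookrightarrow H^1(k(t),\ker\lambda^t)$ with the cohomological map $H^1(k(t),\ker\lambda^t)\to H^1(k(C_3),\mu_3)$ built from the nondegenerate Weil pairing $\ker\lambda^t\times\ker\lambda\to\mu_3$ and the tautological section of $\ker\lambda$ over the curve $C_3\colon y^2=f$; it is exactly this pairing---which pairs $\ker\lambda^t$ against $\ker\lambda$, not against itself---that produces the ``crossover'' whereby $\lambda^t$-descent lands on $C_3$ and $\lambda$-descent lands on $C_3'$. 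Following \cite[\S 2]{BK1977} and \cite[\S 2.1]{gl18a}, one computes this composition to be $(x_0,y_0)\mapsto y_0-y\in k(C_3)^\times/3$, where $y$ is the coordinate function on $C_3$. Second, by the argument at the start of the proof of \cite[Theorem~1.1]{gl18a} (adapted to the isogeny $\lambda^t$), the Kummer map takes values in the subgroup of classes unramified outside $\Sigma_3$, where $\Sigma_3$ is the set of bad places of $\Egp$ whose component group has order divisible by $3$; applying Lemma~\ref{lem:badE} to both $E$ and $E'$ (note $-27f$ has the same zeros and degree as $f$), the component group is $\Z/3$ at the fibres of type $\mathrm{IV}$ and $\mathrm{IV}^*$, i.e.\ over the zeros of $f_2$ and $f_4$, and has no $3$-torsion at the fibres of type $\mathrm{II},\mathrm{I}_0^*,\mathrm{II}^*$, while $\infty\in\Sigma_3$ precisely when the fibre at $\infty$ has type $\mathrm{IV}^*$ or $\mathrm{IV}$, i.e.\ when $d\equiv 2$ or $4\pmod 6$; thus $t^{-1}(\Sigma_3)=D_3$ and the composition lands in $H^1(C_3\setminus D_3,\mu_3)$. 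Third, $\phi_3$ is the further composition with the map $\kappa\colon h\mapsto\tfrac13\divisor(h)$ of \eqref{eq:kappadef} into $\Pic(C_3,\Q.D_3)[3]$; in particular $\phi_3$ is a group homomorphism.

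The injectivity of $\phi_3$ will then follow, exactly as for $\phi_2$, from three facts: (i) injectivity of the composition of the Kummer map with the Weil-pairing map above, by the same argument as in the proof of \cite[Theorem~1.1]{gl18a}; (ii) exactness of the Kummer-log sequence \eqref{eq:kummerlog} for $n=3$, which gives $\ker\kappa=k^\times/3$; and (iii) the fact that $k^\times/3$ does not meet the image of the descent map. For (iii), one uses the norm map $N\colon k(C_3)^\times/3\to k(t)^\times/3$ of the degree-$2$ cover $t\colon C_3\to\PP^1$: the image of the descent map lies in $\ker N$ because $N(y_0-y)=y_0^2-f=x_0^3$ is a cube, whereas $N$ sends a constant $c$ to $c^2$, and $c\mapsto c^2$ is an automorphism of the $\F_3$-vector space $k^\times/3$, so that $\ker N\cap k^\times/3=0$. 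Combining (i)--(iii) gives $\ker\phi_3=0$. The case of $\phi_3'$ is obtained word for word by replacing $E,\lambda^t,C_3,D_3$ with $E',\lambda,C_3',D_3'$; the relevant norm identity there reads $N(y_0-y)=y_0^2+27f=x_0^3$.

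The step I expect to be the real work---everything else being a faithful transcription of the $2$-descent proof---is the clean construction of the cohomological $\lambda^t$- and $\lambda$-descent maps in this twisted, non-self-dual setting: fixing the Weil pairing $\ker\lambda^t\times\ker\lambda\to\mu_3$ so that the relevant curves come out as $C_3$ and $C_3'$ (and matching the explicit formula $y_0-y$), and verifying injectivity statement (i) for these isogenies rather than for $[2]$. Once the cohomological descent formalism of \cite{gl18a} is extended from multiplication-by-$n$ to a general isogeny---which should be routine---the remaining arguments carry over unchanged.
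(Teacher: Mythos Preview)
Your proposal is correct and follows essentially the same approach as the paper. The paper's own proof is extremely terse---for part 1) it simply invokes the ker-coker lemma for $[3]=\lambda^t\circ\lambda$ together with the observation that $\ker(\lambda^t)$ has no $k(t)$-rational point, and for part 2) it says only that ``the properties of $\phi_3$ and $\phi_3'$ can be proved with the same methods as in the $2$-descent case''---so what you have written is a faithful and accurate expansion of exactly the argument the paper has in mind, including the norm computation $N(y_0-y)=x_0^3$ and the identification $t^{-1}(\Sigma_3)=D_3$ via the component groups at fibres of type $\mathrm{IV}$ and $\mathrm{IV}^*$.
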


The kernels of $\lambda$ and $\lambda^t$ are Cartier dual to each other. In particular, $C_3$ and $C_3'$ become isomorphic over the field $k(\zeta_3)=k(\sqrt{-3})$, which is clear when looking at their equations.

\begin{rmq}
We observe that $C_3$ can be more simply defined by the equation $Y^2=f_1(t)f_3(t)f_5(t)$, and similarly for $C_3'$.
It follows (again) from the Riemann-Hurwitz formula that the genus of $C_3$ and $C_3'$ are given by
\begin{equation}
\label{eq:genusofC3}
g(C_3)=g(C_3')=\left\{
\begin{array}{ll}
\frac{1}{2}(d_1+d_3+d_5-2) & \text{if}\enspace 2\mid d\\\frac{1}{2}(d_1+d_3+d_5-1) & \text{otherwise}.
\end{array}\right.
\end{equation}
\end{rmq}

\begin{cor}
\label{cor:3descent}
In the set-up of Proposition~\ref{prop:2descent}, we have
\begin{equation}
\label{eq:rankbound2.1}
\rk_\Z E(k(t)) \leq \dim_{\F_3} \Pic(C_3)[3] + \dim_{\F_3} \Pic(C_3')[3] +\varepsilon +\varepsilon'
\end{equation}
where
$$
\varepsilon:=\#\{v\in\PP^1, \text{the fiber of $D_3$ at $v$ is the sum of two $k_v$-rational points}\}
$$
and $\varepsilon'$ is defined similarly for $D_3'$.
\end{cor}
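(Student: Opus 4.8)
The plan is to mimic the proof of Corollary~\ref{cor:2descent}, but now working with the two injective $3$-descent maps $\phi_3$ and $\phi_3'$ of Proposition~\ref{prop:3descent}, and to assemble the bound from the short exact sequence in part~1) of that Proposition. First, from the exact sequence
$$
0 \longrightarrow E'(k(t))/\lambda E(k(t)) \longrightarrow E(k(t))/3E(k(t)) \longrightarrow E(k(t))/\lambda^t E'(k(t)) \longrightarrow 0,
$$
I would take $\F_3$-dimensions to get $\dim_{\F_3} E(k(t))/3E(k(t)) = \dim_{\F_3} E'(k(t))/\lambda E(k(t)) + \dim_{\F_3} E(k(t))/\lambda^t E'(k(t))$. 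Since $E(k(t))$ is finitely generated (Mordell--Weil), $\dim_{\F_3} E(k(t))/3E(k(t)) = \rk_\Z E(k(t)) + \dim_{\F_3} E(k(t))_{\tors}/3$, so up to a nonnegative torsion term this dimension is at least $\rk_\Z E(k(t))$; actually for an upper bound on the rank it suffices that $\rk_\Z E(k(t)) \leq \dim_{\F_3} E(k(t))/3E(k(t))$, which is immediate.

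Next, by Proposition~\ref{prop:3descent}~2), $\phi_3$ embeds $E(k(t))/\lambda^t E'(k(t))$ into $\Pic(C_3,\Q.D_3)[3]$ and $\phi_3'$ embeds $E'(k(t))/\lambda E(k(t))$ into $\Pic(C_3',\Q.D_3')[3]$. So it remains to bound $\dim_{\F_3}\Pic(C_3,\Q.D_3)[3]$ by $\dim_{\F_3}\Pic(C_3)[3] + \varepsilon$, and similarly for the primed objects. Here I would invoke exactly the refinement used in the proof of Corollary~\ref{cor:2descent}: one shows by a local argument (the $\lambda^t$-descent map can be defined over the localization of $\PP^1$ at each place of $\Sigma_3 := t(D_3)$) that the image of $\phi_3$ lies in the kernel of the multidegree map $\Pic(C_3,\Q.D_3)[3]\to\bigoplus_v (\Q/\Z)\cdot v$ recording the degree mod $\Z$ of the fiber above each $v$. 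The map $t: D_3\to\Sigma_3$ is \'etale of degree $2$, so the fiber $(D_3)_v$ over $v\in\Sigma_3$ consists of $2$ closed points when it splits (i.e.\ is the sum of two $k_v$-rational points) and $1$ closed point otherwise; writing $\varepsilon_v = \#(D_3)_v - 1 \in\{0,1\}$, summing gives $\varepsilon=\sum_v \varepsilon_v$, and a variant of the bound \eqref{eq:ratPicbound} applied with the multidegree constraint yields $\dim_{\F_3}\Pic(C_3,\Q.D_3)[3]\leq \dim_{\F_3}\Pic(C_3)[3] + \varepsilon$. The same reasoning with $D_3'$, $C_3'$ gives the term $\dim_{\F_3}\Pic(C_3')[3] + \varepsilon'$.

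Combining: $\rk_\Z E(k(t)) \leq \dim_{\F_3}\Pic(C_3,\Q.D_3)[3] + \dim_{\F_3}\Pic(C_3',\Q.D_3')[3] \leq \dim_{\F_3}\Pic(C_3)[3] + \dim_{\F_3}\Pic(C_3')[3] + \varepsilon + \varepsilon'$, which is \eqref{eq:rankbound2.1}. The main obstacle I anticipate is justifying cleanly the local argument that forces $\operatorname{im}\phi_3$ into the kernel of the multidegree map — i.e.\ that the contribution of $k^\times/3$ and the component-group considerations at the places of $\Sigma_3$ do not interfere — but since the analogous statement for $\phi_2$ is already established in the proof of Corollary~\ref{cor:2descent} by exactly the same mechanism (defining the descent map over each local ring), this should transfer with only notational changes. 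A minor point to check is that $\Pic(C_3,\Q.D_3)=\Pic(C_3)$ when $D_3=\emptyset$ (so that $\varepsilon=0$ trivially), which is exactly the degenerate case noted after \eqref{eq:ratPicbound}.
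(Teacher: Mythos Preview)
Your proposal is correct and follows essentially the same route as the paper's proof: combine the short exact sequence and the two injections $\phi_3,\phi_3'$ from Proposition~\ref{prop:3descent} to bound $\rk_\Z E(k(t))$ by $\dim_{\F_3}\Pic(C_3,\Q.D_3)[3]+\dim_{\F_3}\Pic(C_3',\Q.D_3')[3]$, then invoke the local multidegree argument from the proof of Corollary~\ref{cor:2descent} to pass to $\Pic(C_3)[3]$ and $\Pic(C_3')[3]$ with the $\varepsilon,\varepsilon'$ corrections. The paper additionally observes that $E(k(t))$ has no $3$-torsion (so the rank equals $\dim_{\F_3} E(k(t))/3E(k(t))$ exactly), but your inequality $\rk_\Z E(k(t))\le \dim_{\F_3} E(k(t))/3E(k(t))$ already suffices for the stated bound.
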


\begin{rmq}
When $k$ is algebraically closed, the bound \eqref{eq:rankbound2.1}, like the bound \eqref{eq:rankbound1.1}, is equivalent to the geometric rank bound \eqref{eq:rankbound1.2} deduced from Igusa's inequality.
Nevertheless, \eqref{eq:rankbound1.1} and \eqref{eq:rankbound2.1} are not equivalent in general.
\end{rmq}

\begin{proof}[Proof of Proposition~\ref{prop:3descent}]
1) This follows from the ker-coker Lemma for the map $[3]=\lambda^t\circ\lambda$ on the group $E(k(t))$, the exactness on the left being due to the fact that $\ker(\lambda^t)$ has no rational point over $k(t)$. 2) The properties of $\phi_3$ and $\phi_3'$ can be proved with the same methods as in the $2$-descent case.
\end{proof}

\begin{proof}[Proof of Corollary~\ref{cor:3descent}]
By combining the two statements of Proposition~\ref{prop:3descent}, and observing that $E(k(t))$ has no $3$-torsion, one finds that
$$
\rk_\Z E(k(t)) \leq \dim_{\F_3} \Pic(C_3,\Q.D_3)[3]+\dim_{\F_3} \Pic(C_3',\Q.D_3')[3].
$$
The inequality \eqref{eq:rankbound2.1} follows by similar arguments as in the proof of Corollary~\ref{cor:2descent}.
\end{proof}



\section{Upper bounds for the number of integral points}
\label{sec:ub}


\subsection{Davenport's inequality}


In addition to the tools developed in the previous sections, we will also use the following inequality of Davenport \cite{Dav}, with an extension by Sch{\"u}tt and Schweizer \cite{SS} to positive characteristic.

\begin{thm}[Davenport's inequality]
\label{davenport}
Let $k$ be a field and let $g,h\in k[t]$ with $\deg(g^3)=\deg(h^2)=6M$ for some integer $M>0$.  Suppose that one of the following holds:
\begin{enumerate}
\item[(i)] $k$ has characteristic $0$ and $g^3\neq h^2$;
\item[(ii)] $k$ has characteristic not $2$ or $3$, and $g^3-h^2$ contains at least one irreducible factor of multiplicity one.
\end{enumerate}
Then
$$
\deg (g^3-h^2)\geq M+1.
$$
\end{thm}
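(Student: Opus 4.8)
## Proof proposal for Davenport's inequality

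The plan is to reduce Davenport's inequality to a statement about the function field $k(t)$ and its associated differentials, the key player being the polynomial $r:=g^3-h^2$ together with its logarithmic derivative structure. First I would reduce to the case $\gcd(g,h)=1$: if $p$ is an irreducible common factor, then $p^2 \mid r$ and in characteristic $0$ (or in characteristic $\neq 2,3$ provided we keep track of the multiplicity-one factor) one can divide $g$ by $p$, $h$ by $p^{3/2}$... — actually the cleaner reduction is that a common factor only makes the bound easier, so I will assume $g,h$ coprime, hence $g^3$ and $h^2$ coprime, so $r\neq 0$ has degree at most $6M-1$ (its degree could be less if the top coefficients of $g^3$ and $h^2$ cancel) and we must show $\deg r\ge M+1$.

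The main idea is a Wronskian/derivation argument. Consider the three functions $g^3$, $h^2$, and $r=g^3-h^2$; since $g^3 = h^2 + r$, any two of $g^3,h^2,r$ determine the third, so the Wronskian $W(g^3,h^2) = W(g^3,r) = W(h^2,r)$ (up to sign) is a single polynomial $W$. A direct computation gives $W(g^3,h^2) = g^3 h^2 \left(3\frac{g'}{g} - 2\frac{h'}{h}\right) = g^2 h \,(3g'h - 2gh')$, so $g^2 h \mid W$; symmetrically $h$-free analysis of $W(g^3,r)$ and $W(h^2,r)$ shows that (after removing the repeated roots coming from $g$ and $h$) the ``radical-adjusted'' degree count forces $\deg W$ to be large while simultaneously $\deg W \le \deg(g^3) + \deg r - 1 - (\text{number of distinct roots of } g^3 r)$ type bounds from the ABC/Mason–Stothers philosophy. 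Concretely, I would invoke Mason–Stothers (the Mason inequality) for the coprime triple $g^3 + r + (-h^2) = 0$ in $k[t]$: it yields
\begin{equation*}
\max\{\deg(g^3),\deg(h^2),\deg r\} \le \deg\operatorname{rad}(g^3 h^2 r) - 1 \le \deg g + \deg h + \deg r - 1,
\end{equation*}
and since $\max\{\deg(g^3),\deg(h^2)\} = 6M$ while $\deg g + \deg h = 2M + 3M = 5M$, this rearranges to $6M \le 5M + \deg r - 1$, i.e. $\deg r \ge M+1$. This is exactly the claimed bound.

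The hypotheses (i) and (ii) are precisely what is needed to run Mason–Stothers: in characteristic $0$ one only needs $g^3 \neq h^2$ (so that the triple is genuinely nonzero and not all proportional), while in characteristic $p\neq 2,3$ Mason–Stothers can fail when all three terms are $p$-th powers, and the hypothesis that $g^3-h^2$ has an irreducible factor of multiplicity one rules this out — this is the Schütt–Schweizer refinement, so I would cite \cite{SS} for the positive-characteristic version of the Mason inequality that applies here. The main obstacle, and the only genuinely delicate point, is handling the positive-characteristic case correctly: one must make sure that the coprimality reductions do not destroy the ``multiplicity-one factor'' hypothesis, and that the version of Mason–Stothers invoked is the one (as in \cite{SS}) whose hypothesis is exactly ``not all terms are $p$-th powers'' rather than the naive ``characteristic $0$'' statement. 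Everything else — the reduction to coprime $g,h$, the degree bookkeeping $\deg g + \deg h = 5M$, and the final arithmetic $6M \le 5M + \deg r - 1$ — is routine.
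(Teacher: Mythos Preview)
The paper does not prove this theorem: its ``proof'' consists entirely of citing Davenport \cite{Dav} for case (i) and Sch\"utt--Schweizer \cite{SS} for case (ii). Your Mason--Stothers approach is the standard modern route to Davenport's inequality and is essentially correct, so in that sense you are doing more than the paper does.

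There is, however, one genuine gap in your sketch: the reduction to $\gcd(g,h)=1$. Your claim that ``a common factor only makes the bound easier'' is not obvious and, as written, is unjustified. If you set $d=\gcd(g^3,h^2,r)$ and apply Mason--Stothers to the coprime triple $(G,H,R)=(g^3/d,\,h^2/d,\,r/d)$, the naive radical estimate $\deg\operatorname{rad}(GHR)\le \deg g+\deg h+\deg r$ only yields $\deg r\ge M+1-\deg d$, which is too weak. The fix is either (a) to prove the sharper estimate $\deg\operatorname{rad}(GHR)\le \deg g+\deg h+\deg r-\deg d$, which follows from the elementary prime-by-prime inequality $v_p(g)+v_p(h)+v_p(r)-1\ge \min\{3v_p(g),2v_p(h),v_p(r)\}$ for each $p\mid ghr$, or (b) to bypass coprimality altogether via the polynomial identity
\[
g^2\bigl(2gh'-3g'h\bigr)=2h'r-hr',
\]
obtained by eliminating $h^2$ from $g^3-h^2=r$ and its derivative; when the right-hand side is nonzero this gives $4M\le \deg h+\deg r-1=3M+\deg r-1$ directly. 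Either route closes the gap. Your handling of hypothesis (ii) --- recognizing that the multiplicity-one factor is exactly what prevents all terms from being $p$-th powers, and deferring to \cite{SS} for the positive-characteristic Mason--Stothers --- is correct.
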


\begin{proof}
The case (i) is the original Davenport's inequality. The case (ii) is a result by Sch{\"u}tt and Schweizer \cite[Theorem~1.2, (b)]{SS}.
\end{proof}

Recall from section~\ref{sec:heights} that for a point $P=(x(t),y(t))\in E(k(t))$, we define the naive height of $P$ by
\begin{equation*}
h(P):=\max \left\{\frac{1}{2}\deg x,\frac{1}{3}\deg y\right\}.
\end{equation*}

Davenport's inequality can be restated in terms of heights as follows.

\begin{cor}
\label{Dcor}
Assume hypotheses~\ref{field_k} -- \ref{f1nonct} above. If $P\in E(k[t])$ is an integral point on the elliptic curve $E$, then $h(P)\leq d-1=\deg f -1$.
\end{cor}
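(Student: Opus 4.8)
The plan is to apply Davenport's inequality (Theorem~\ref{davenport}) to the polynomials $g=x$ and $h=y$, where $P=(x(t),y(t))$ with $x,y\in k[t]$. Since $P$ is an integral point of $E$, the Weierstrass equation gives the identity $x^3-y^2=-f$, a polynomial of degree exactly $d=\deg f$. Writing $a=\deg x$ and $b=\deg y$, we have $h(P)=\max\{\tfrac12 a,\tfrac13 b\}$, and the goal is to show $h(P)\le d-1$.

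First I would treat the easy case $3a\ne 2b$. Then the leading terms of $x^3$ and $y^2$ cannot cancel in $x^3-y^2$, so $d=\deg(x^3-y^2)=\max\{3a,2b\}$; in particular $(a,b)\ne(0,0)$, hence $d\ge 2$, and therefore $h(P)=\max\{\tfrac12 a,\tfrac13 b\}\le \tfrac16\max\{3a,2b\}=\tfrac{d}{6}\le d-1$.

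Now suppose $3a=2b$. Then $2\mid a$ and $3\mid b$, so $a=2M$ and $b=3M$ with $M=h(P)\ge 0$. If $M=0$, then $x,y$ are constant and $h(P)=0\le d-1$, since $d\ge\deg f_1\ge 1$ by~\ref{f1nonct}. If $M\ge 1$, then $\deg(g^3)=\deg(h^2)=6M$ with $M>0$, and I would invoke Theorem~\ref{davenport} with $g=x$, $h=y$: in characteristic $0$ hypothesis~(i) holds because $g^3-h^2=-f\ne 0$, and in any characteristic $\ne 2,3$ hypothesis~(ii) holds because, by~\ref{f12} and~\ref{f1nonct}, every irreducible factor of the nonconstant separable polynomial $f_1$ — being coprime to $f_2f_3f_4f_5$ — divides $f=-(g^3-h^2)$ with multiplicity exactly one. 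Davenport's inequality then yields $d=\deg(g^3-h^2)\ge M+1$, that is, $h(P)=M\le d-1$.

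The only substantive point is thus the verification of the hypothesis of Theorem~\ref{davenport} in the balanced case $3\deg x=2\deg y$, which is immediate from the shape of the factorization of $f$ in~\ref{f12} together with the requirement that $f_1$ be nonconstant; everything else reduces to the elementary dichotomy according to whether or not $3\deg x=2\deg y$.
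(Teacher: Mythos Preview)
Your argument is correct. The paper does not spell out a proof of this corollary, treating it as an immediate restatement of Theorem~\ref{davenport}; your case split on whether $3\deg x=2\deg y$ and your verification of hypothesis~(ii) via the nonconstant separable factor $f_1$ are exactly the natural way to fill in the details.
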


It will be convenient to set
\begin{align*}
E(k[t])_{n}&=\{P\in E(k[t])\mid h(P)= n\}.
\end{align*}
We similarly define sets $E(k[t])_{<n}$, $E(k[t])_{\geq n}$, etc.


\subsection{Tools from $2$-descent}
\label{subsect:2descent}


The aim is to prove the following:

\begin{thm}\label{thm:main}
Assume hypotheses~\ref{field_k} -- \ref{f1nonct} above. Assume in addition that $f=f_1f_2^2f_4^4$ and that $3\nmid d$. For $i=2,4$, let $\omega_i=\omega(f_i)$ be the number of irreducible factors of $f_i$. Let us consider the natural map
\begin{align*}
E(k[t])\to E(k(t))/2E(k(t)).
\end{align*}
\begin{enumerate}
\item\label{item:mainthsmallht} When restricted to~$E(k[t])_{\leq d/6}$, this map is $2$-to-$1$ onto its image and omits $0$. In particular,
$$
\#E(k[t])_{\leq d/6} \leq 2^{\rk_{\Z} E(k(t))+1}-2 \leq 2^{2d_1+2d_2-1}-2.
$$
\item\label{item:mainthlargeht} When restricted to~$E(k[t])_{> d/6}$, this map is at most $2^{\omega_2+\omega_4+2}$-to-$1$ onto its image. In particular,
$$
\# E(k[t])_{>d/6} \leq 2^{\rk_{\Z} E(k(t))+\omega_2+\omega_4+2} \leq 2^{2d_1+3d_2+d_4}.
$$
\item\label{item:mainth} The set~$E(k[t])$ of all integral points on~$E$ satisfies
\begin{align*}
\#E(k[t])&\leq 2^{\rk_{\Z} E(k(t))+1}(2^{\omega_2+\omega_4+1}+1)-2\leq 2^{2d_1+2d_2-1}\left(2^{\omega_2+\omega_4+1}+1\right)-2.
\end{align*}
\item\label{item:mainthfeqf1} If in addition $f=f_1$, the previous bound can be sharpened as follows:
\begin{align*}
\#E(k[t])&\leq 2^{\rk_{\Z} E(k(t))+2}-2\leq 2^{2d}-2.
\end{align*}
\end{enumerate}
\end{thm}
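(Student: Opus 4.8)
The plan is to use the $2$-descent map $\phi_2$ of Proposition~\ref{prop:2descent} as the main tool, and to control its fibers on the set of integral points by a careful analysis of the divisor $\frac{1}{2}\divisor(x_0-x)$ on the curve $C_2$. Recall that under the hypothesis $f=f_1f_2^2f_4^4$ with $3\nmid d$, the divisor $D_2$ is empty (since $f_3=1$ and $d\not\equiv 3\pmod 6$), so $\phi_2$ takes values in $\Pic(C_2)[2]$ itself. The first step is to translate the condition that $P=(x_0,y_0)\in E(k[t])$, combined with a height constraint, into information about the shape of the divisor of $x_0-x$ on $C_2$. Writing $x_0\in k[t]$, the function $x_0-x\in k(C_2)^\times$ has poles only above $t=\infty$, and its zeros lie above the roots of $x_0^3+f(t)=y_0^2$, i.e. above points where $t$ takes a value that is a root of the polynomial $x_0^3+f$. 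Since $P$ is integral, $x_0^3+f=y_0^2$ is a square in $k[t]$; the ramification of $t:C_2\to\PP^1$ (totally ramified above zeros of $f$ of the appropriate type, and above $\infty$ when $3\mid d$, which is excluded here) governs how $\divisor(x_0-x)$ decomposes, and one checks that it is indeed $2$-divisible in $\Div(C_2)$, consistent with landing in $\Pic(C_2)[2]$.

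For item~\ref{item:mainthsmallht}, the point is that when $h(P)\le d/6$ the degree of $x_0$ is small enough that $x_0^3+f$ has degree exactly $d$ with leading term that of $f$, and the zero divisor of $x_0-x$ on $C_2$ is supported on the preimages of the roots of the squarefree part of $x_0^3+f$ in a way that pins down the class $\phi_2(P)$ almost completely: the only remaining ambiguity is the sign $\pm y_0$, which accounts for the map being exactly $2$-to-$1$. That $0$ is omitted follows because $\phi_2(P)=0$ would force $x_0-x$ to be a square in $k(C_2)^\times$ times a constant, which (as $x$ generates $k(C_2)$ over $k(t)$ with minimal polynomial of degree $3$) is impossible for $x_0\in k(t)$ unless $P$ is $2$-torsion, and $E(k(t))$ has no $2$-torsion here since $f$ is not a cube. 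Combining injectivity of $\phi_2$ with $\#\Pic(C_2)[2]=2^{\dim_{\F_2}\Pic(C_2)[2]}$ and the rank bound from Corollary~\ref{cor:2descent} gives the stated inequality, and the geometric bound $2^{2d_1+2d_2-1}$ comes from plugging the genus formula \eqref{eq:genusofC2} and \eqref{eq:rankbound1.2} into it.

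For item~\ref{item:mainthlargeht}, when $h(P)>d/6$ Davenport's inequality (Corollary~\ref{Dcor}) still bounds $h(P)\le d-1$, but now $\deg x_0$ is large, so $x_0^3+f=y_0^2$ is governed by the cube $x_0^3$ rather than by $f$; the zero divisor of $x_0-x$ on $C_2$ splits according to the factorization of $y_0$, and the extra indeterminacy comes from the freedom in distributing square factors among the $f_i$-parts — this is precisely where the factor $2^{\omega_2+\omega_4}$ enters, one binary choice per irreducible factor of $f_2$ and of $f_4$. The main obstacle, and the part requiring the most care, is exactly this fiber count: showing that the ambiguity is genuinely bounded by $2^{\omega_2+\omega_4+2}$ (the $+2$ being the sign $\pm y_0$ together with one further choice at $\infty$) requires a place-by-place local analysis of how two integral points with the same image under $\phi_2$ can differ, using the structure of the component groups $\Phi_v$ recorded in Lemma~\ref{lem:badE} at the fibers of type $\mathrm{IV}$ and $\mathrm{IV}^*$. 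Item~\ref{item:mainth} is then immediate by adding the bounds from \ref{item:mainthsmallht} and \ref{item:mainthlargeht} and simplifying:
\begin{align*}
\#E(k[t]) &\le \left(2^{\rk_\Z E(k(t))+1}-2\right)+2^{\rk_\Z E(k(t))+\omega_2+\omega_4+2}\\
&= 2^{\rk_\Z E(k(t))+1}\left(2^{\omega_2+\omega_4+1}+1\right)-2,
\end{align*}
and the geometric version follows from \eqref{eq:rankbound1.2} with $d_3=d_5=0$. Finally, item~\ref{item:mainthfeqf1}: when $f=f_1$ is squarefree, $\omega_2=\omega_4=0$ and $C_2$ has no fibers of type $\mathrm{IV},\mathrm{IV}^*$ over the finite places, so the fiber-counting obstruction in \ref{item:mainthlargeht} collapses — the map is $2$-to-$1$ onto its image on all of $E(k[t])$, not just on the small-height part — and one gets $\#E(k[t])\le 2\cdot\#\Pic(C_2)[2]-2\le 2^{\rk_\Z E(k(t))+2}-2$, with the final bound $2^{2d}-2$ coming from \eqref{eq:rankbound1.2} again (here $\sum d_i=d$).
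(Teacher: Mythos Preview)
Your proposal is too vague in exactly the place where the real work happens, and in two spots the mechanism you describe is not the one that actually produces the bound.

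The paper does not analyze the fibers of $\phi_2$ via ``place-by-place local analysis using the component groups $\Phi_v$''. The engine is the following concrete identity: if $P_0=(x_0,y_0)$ and $P_1=(x_1,y_1)$ in $E(k[t])$ have the same image in $E(k(t))/2E(k(t))$, then there is a $\psi\in k(C_2)$ regular outside infinity with
\[
(x_0(t)-x)(x_1(t)-x)=\psi^2,
\]
and using an explicit $k[t]$-basis $1,\ x/f_4,\ x^2/(f_2f_4^2)$ for the ring of functions regular outside infinity one writes $\psi=c+bx+a\,x^2/(f_2f_4^2)$. Expanding gives three polynomial relations among $a,b,c,x_0,x_1$. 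One then computes the pole order of $\psi$ at infinity in two ways (from the product and from the $a,b,c$ expression), and the resulting degree inequalities force $a=0$, which in turn forces $x_0=x_1$, i.e.\ $P_0=\pm P_1$. This Newton-polygon/Riemann--Roch argument on $C_2$ is the missing idea in your plan; the sentence ``the zero divisor of $x_0-x$ pins down the class almost completely'' is not a substitute for it.

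For item~\ref{item:mainthlargeht}, your accounting of the factor $2^{\omega_2+\omega_4+2}$ is off. The $2^{\omega_2+\omega_4}$ does \emph{not} come from ``distributing square factors'' or from component groups; it comes from the auxiliary invariant $\gcd(x(P),f_2f_4)$, which takes at most $2^{\omega_2+\omega_4}$ values on integral points. The degree argument above only goes through under the extra hypotheses $\gcd(x_0,f_2f_4)=\gcd(x_1,f_2f_4)$ and $h(P_1)\le h(P_0)<3h(P_1)$; to arrange the latter one splits $E(k[t])_{>d/6}$ into the two height ranges $(d/6,d/2]$ and $(d/2,d-1]$ (the upper limit via Davenport). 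That split, not a ``choice at $\infty$'', is the source of the remaining factor of $2$ (the other being the sign $\pm$).

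Finally, your argument for item~\ref{item:mainthfeqf1} is incorrect as stated. Even when $f=f_1$ the map is \emph{not} shown to be $2$-to-$1$ on all of $E(k[t])$; one again needs two height ranges, here $[0,d/3)$ and $[d/3,d)$, and proves the map is $2$-to-$1$ on each (omitting $0$ only on the first). Your own arithmetic reflects the problem: a $2$-to-$1$ map omitting $0$ into $E(k(t))/2E(k(t))\cong(\Z/2\Z)^{r}$ would give $\#E(k[t])\le 2^{r+1}-2$, not the claimed $2^{r+2}-2$. The correct bound is $(2^{r+1}-2)+2^{r+1}=2^{r+2}-2$, coming from the two ranges.
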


\begin{rmq}
\label{d2rem}
When $d=2$, one can slightly improve the inequality of Theorem~\ref{thm:main} (iv) to obtain a sharp result.  In this case, the height of any point in $E(k(t))$ is at least $\frac{1}{3}$.  Then Davenport's inequality, combined with the fact that $h$ is a canonical height, shows that the map of Theorem~\ref{thm:main} omits $0$ in its image.  In this case, we obtain the improved bound $\#E(k[t])\leq 2^{2d-1}-2=6$, which is sharp by \cite[Th.~8.2]{Shioda} when $k$ is algebraically closed of characteristic $0$.
\end{rmq}

As before, let $C_2$ be the smooth projective completion of the affine plane curve defined by $x^3+f(t)=0$.  We first record some elementary facts about the geometry of $C_2$.

If $3\nmid d$, the unique pole of~$t$ in~$k(t)$ totally ramifies in~$k(C_2)$ and we denote by~$\infty$ the corresponding
point on~$C_2$. If $3\mid d$, the pole of~$t$ totally splits in~$k(C_2)$ and we denote by~$\infty_1,\infty_2,\infty_3$ the corresponding points
of~$C_2$ (over $\kbar$).

We can identify the function field $k(C_2)$ with $k(t,x)$.  For $h\in k(C_2)$, we let $(h)_\infty$ denote the divisor of poles of $h$.
When $h\in k[t]$, we use $\deg_t h$ to denote the degree of $h$ as a polynomial in $t$.  Note that, when considered as maps $C_2\to \PP^1$, $x$ has degree $d$, and $t$ has degree $3$.

For a divisor $D$ on $C_2$, we let
\begin{equation}
  \ord^\star_\infty(D)
=
\begin{cases}
  \text{multiplicity of~$D$ at~$\infty$} &\text{if~$3\nmid d$}\\
  \max_{1\leq i\leq 3}\left\{\text{multiplicity of~$D$ at~$\infty_i$}\right\} &\text{if~$3\mid d$}\\
\end{cases}
\end{equation}

\begin{lem}\label{RRlem}
Assume that $f=f_1f_2^2f_4^4$. Then the ring of rational functions on $C_2$ which are regular outside infinity can be explicitly described as
$$
k[t] \oplus k[t]\cdot \frac{x}{f_4} \oplus k[t]\cdot \frac{x^2}{f_2f_4^2}.
$$
Moreover, if~$\psi=\gamma +\beta\frac{x}{f_4} +\alpha\frac{x^2}{f_2f_4^2}$
belongs to this ring, then the divisor of its poles satisfies
\begin{align*}
\ord_\infty^\star (\psi)_\infty
=
\begin{cases}
\max\{3\deg_t \gamma, 3\deg_t \beta + d_1+2d_2+d_4, 3\deg_t \alpha +2d_1+d_2+2d_4 \}
&\text{if $3\nmid d$},\\
\max\left\{\deg_t \gamma, \deg_t \beta + \frac{d_1+2d_2+d_4}{3}, \deg_t \alpha +\frac{2d_1+d_2+2d_4}{3}\right\}
&\text{if $3\mid d$,}
\end{cases}
\end{align*}
(note that the rationals appearing in this formula are all integers).
\end{lem}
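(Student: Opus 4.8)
The plan is to identify the ring $A$ of rational functions on $C_2$ regular outside infinity with the integral closure of $k[t]$ in $k(C_2)=k(t,x)$, where $x^3=-f=-f_1f_2^2f_4^4$, and then to write down an explicit integral basis over $k[t]$. First I would observe that $f_4^2x$ and $f_2f_4^2x^2$ visibly satisfy monic equations over $k[t]$: indeed $(f_4^2 x)^3 = f_4^6 x^3 = -f_1f_2^2f_4^{10}$, which lies in $k[t]$, but more to the point $\left(\frac{x}{f_4}\right)^3 = \frac{-f_1f_2^2f_4^4}{f_4^3} = -f_1f_2^2f_4$ and $\left(\frac{x^2}{f_2f_4^2}\right)^3 = \frac{-f_1^2f_2^4f_4^8}{f_2^3f_4^6} = -f_1^2f_2f_4^2$, both in $k[t]$; so $x/f_4$ and $x^2/(f_2f_4^2)$ are integral over $k[t]$, and hence the right-hand module $M := k[t]\oplus k[t]\cdot\frac{x}{f_4}\oplus k[t]\cdot\frac{x^2}{f_2f_4^2}$ is contained in $A$. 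For the reverse inclusion, I would compute the discriminant of $M$ over $k[t]$ and show it equals, up to a unit, the discriminant of the minimal squarefree divisor data forcing equality; concretely, the discriminant of the order $k[t][x]$ is a constant times $f^2 = f_1^2f_2^4f_4^8$, while passing from $\{1,x,x^2\}$ to $\{1, x/f_4, x^2/(f_2f_4^2)\}$ divides the discriminant by $(f_2f_4^3)^2 = f_2^2 f_4^6$, leaving $f_1^2 f_2^2 f_4^2$, the square of the radical of $f$, which is squarefree-free of repeated-root obstructions exactly because the $f_i$ are separable and pairwise coprime; since the different can only be supported on ramification of $t\colon C_2\to\PP^1$, which is tame (characteristic $\neq 2,3$) and occurs precisely at the zeros of $f$ and possibly $\infty$, one checks locally at each zero of each $f_i$ that $M$ is already maximal, so $M=A$.

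Next, for the pole-order formula, I would use that $t\colon C_2\to\PP^1$ has degree $3$, so $\deg_t h$ contributes $3\deg_t h$ to the pole divisor of a polynomial $h\in k[t]$ at infinity (distributed over the $\infty_i$ when $3\mid d$, each getting $\deg_t h$); hence $\ord^\star_\infty(h)_\infty = \deg_t h$ if $3\mid d$ and $=3\deg_t h$ if $3\nmid d$. For the basis elements I need the pole orders of $\frac{x}{f_4}$ and $\frac{x^2}{f_2f_4^2}$ at infinity. Since $x$ has degree $d$ as a map $C_2\to\PP^1$ and all its poles lie above $\infty\in\PP^1$ (because $x^3=-f$ with $f$ a polynomial), one has $(x)_\infty$ supported at the point(s) over $\infty$; comparing with $t$ (degree $3$) and using $x^3 = -f$, at the point over $\infty$ in the $3\nmid d$ case $\ord_\infty(t) = -3$ and $\ord_\infty(x) = -d$, while $f_4$ is a polynomial of degree $d_4$ so $\ord_\infty(f_4) = -3d_4$, giving $\ord_\infty(x/f_4) = -d + 3d_4 = -(d_1+2d_2+4d_4) + 3d_4 = -(d_1+2d_2+d_4)$, matching the claimed coefficient $\beta\frac{x}{f_4}$ contribution $3\deg_t\beta + d_1 + 2d_2 + d_4$; similarly $\ord_\infty(x^2/(f_2f_4^2)) = -2d + 3d_2 + 6d_4 = -(2d_1+d_2+2d_4)$. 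Then the formula for a general $\psi = \gamma + \beta\frac{x}{f_4} + \alpha\frac{x^2}{f_2f_4^2}$ follows because the three summands have distinct pole orders at infinity modulo $3$ (namely $\equiv 0$, $d_1+2d_2+d_4$, $2d_1+d_2+2d_4\pmod 3$, which are pairwise distinct since, e.g., their difference is $\pm(d_1 - d_2 + d_4)$ and one checks using $d\not\equiv 0$ that these residues are distinct — or more robustly, the three functions $1, x/f_4, x^2/(f_2f_4^2)$ lie in distinct eigenspaces for the $\mu_3$-action $x\mapsto\zeta_3 x$ when $3\nmid d$, forcing no cancellation), so the pole order of the sum is the maximum of the individual pole orders, and in the $3\mid d$ case one divides the whole picture by $3$ accordingly.

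The main obstacle I anticipate is the reverse inclusion $A\subseteq M$, i.e.\ proving $M$ is the full integral closure rather than a proper suborder; the clean way is the local/discriminant argument sketched above, verifying that at each ramified place $v$ (a zero of some $f_i$, $i=2,4$, where the relevant basis element contributes a genuine denominator) the localization $M_{(v)}$ is a regular local ring — equivalently that $C_2$ is smooth there, which is exactly the content built into the separability and coprimality hypotheses on the $f_i$ and the choice of exponents $1,2,4$; one must also handle $\infty$ carefully in the $3\mid d$ case (where $\infty$ splits and contributes no ramification) versus the $3\nmid d$ case (where $\infty$ is totally ramified but $t$ is the uniformizer downstairs and tameness applies). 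Once smoothness/maximality at every finite place is in hand, $M=A$ follows since both are finite $k[t]$-modules agreeing after localization everywhere on $\mathbb{A}^1$.
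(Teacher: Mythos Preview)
Your integral-closure argument is essentially the paper's: both verify that $x/f_4$ and $x^2/(f_2f_4^2)$ satisfy monic cubic equations over $k[t]$, and then compare discriminants (the paper invokes {\tt magma}; you compute by hand that passing from the basis $\{1,x,x^2\}$ to $\{1,x/f_4,x^2/(f_2f_4^2)\}$ divides the discriminant by $(f_2f_4^3)^2$, leaving $f_1^2f_2^2f_4^2$). This part is fine.

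For the pole-order formula your route differs from the paper's. The paper computes the characteristic polynomial $\chi(X)\in k[t][X]$ of $\psi$ and reads $\ord^\star_\infty(\psi)_\infty$ off the Newton polygon of $\chi$ at $v=-\deg_t$: the slopes of the polygon are exactly the values $-w_i(\psi)$, and a case analysis on the coefficients of $\chi$ shows the right-most slope equals the claimed maximum. Your direct computation of the individual pole orders of $1,\,x/f_4,\,x^2/(f_2f_4^2)$ is correct, and when $3\nmid d$ your ``distinct residues mod $3$'' observation is clean and sufficient: the three orders at the single ramified point $\infty$ are congruent to $0,\,d,\,2d\pmod 3$, hence pairwise distinct, so no cancellation can occur.

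There is, however, a genuine gap in the case $3\mid d$, which you dismiss with ``one divides the whole picture by $3$ accordingly''. Dividing by $3$ only rescales the individual pole orders; it does nothing to rule out cancellation, and your mod-$3$ separation collapses (all three residues become $0$). Concretely, at each $\infty_i$ the three summands may well have equal integer pole orders, and nothing you have written prevents the leading terms from cancelling there. Your approach can be repaired, but it requires an extra argument: the $\mu_3$-action permutes $\infty_1,\infty_2,\infty_3$ transitively, and if one writes $x\sim \xi_i\,t^{d/3}$ near $\infty_i$ with $\xi_1,\xi_2,\xi_3$ the three distinct cube roots of minus the leading coefficient of $f$, then the leading coefficient of $\psi$ at $\infty_i$ has the form $A+B\xi_i+C\xi_i^2$; as a polynomial of degree $\le 2$ in $\xi_i$ this vanishes at at most two of the three points, so at least one $\infty_j$ exhibits no cancellation and the maximum over $j$ is the asserted value. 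Alternatively, the paper's Newton-polygon argument handles both cases uniformly and sidesteps the issue entirely. Note also that the eigenspace decomposition under $x\mapsto\zeta_3 x$ holds regardless of $d$; what you actually use when $3\nmid d$ is that the unique valuation at infinity is $\mu_3$-invariant, which fails precisely when $3\mid d$. Your ``main obstacle'' is not the integral closure but this point.
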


\begin{proof}
The ring~$B$ of rational functions on $C_2$ which are regular outside infinity is nothing else than the integral closure of~$k[t]$
inside~$k(C_2) = k(t,x)$. Since the extension~$k(t,x)/k(t)$ is radical of degree~$3$ and since~$3$ does not divide the
characteristic of~$k$, only the zeros of~$f_1f_2f_4$ ramify in~$k(t,x)$. More precisely, every zero of~$f_1f_2f_4$ totally ramifies
 in~$k(t,x)$ and~$\dis(B/k[t]) = f_1^2f_2^2f_4^2$. The two functions~$\frac{x}{f_4}$ and~$\frac{x^2}{f_2f_4^2}$ are elements of~$B$
since~$\bigl(\frac{x}{f_4}\bigr)^3 + f_1f_2^2f_4=0$ and~$\bigl(\frac{x^2}{f_2f_4^2}\bigr)^3 - f_1^2f_2f_4^2= 0$.
We deduce that~$k[t]\oplus k[t]\frac{x}{f_4}\oplus k[t]\frac{x^2}{f_2f_4^2} \subset B$. Using {\tt magma}, we compute the discriminant
of the left ring; we find~$f_1^2f_2^2f_4^2$ and therefore both rings are equal.

For~$\psi\in B$, the computation of~$\ord_\infty^\star (\psi)_\infty$ is strongly related to Newton polygons.
Let~$v = -\deg_t$ be the valuation ``at infinity'' of the rational function field~$k(t)$. This valuation totally ramifies (resp.~splits) in~$k(C_2)$
if~$3\nmid d$ (resp.~$3\mid d$); let~$w$ or~$w_1,w_2,w_3$ denote the extensions of~$v$
to~$k(C_2)$ (note that~$w(B) = \frac{1}{3}\Z$). The number~$\ord_\infty^\star (\psi)_\infty$ is related with these valuations, in the following way:
$$
\ord_\infty^\star (\psi)_\infty
=
\begin{cases}
  -3w(\psi) &\text{if~$3\nmid d$}\\
  \max_{1\leq i\leq 3}\left\{-w_i(\psi)\right\} &\text{if~$3\mid d$}
\end{cases}
$$
Hence, up to a factor of~$3$ in the case~$3\nmid d$, we want to compute the maximum at~$\psi$ of the opposite valuations extending~$v$.
These opposite valuations are known to correspond to the slopes of the Newton polygon of the characteristic polynomial of~$\psi$, and the maximum of
these opposite valuations must be the slope of the right-most segment of the polygon (because the polygon is a lower convex hull its slopes must
increase from left to right). 
Let~$\chi(X)=X^3+c_2X^2+c_1X+c_0\in k[t][X]$ be the characteristic polynomial of~$\psi$; with {\tt magma}, we easily find that
$$
\chi(X) = X^3
- 3 \gamma X^2
+ 3\left(\alpha \beta f_1 f_2 f_4 + \gamma^2\right) X
- \left(\alpha^3 f_1^2 f_2 f_4^2 - \beta^3 f_1 f_2^2 f_4 + 3 \alpha \beta \gamma f_1 f_2 f_4 + \gamma^3\right).
$$
In order to study the Newton polygon of this polynomial, we put
\begin{align*}
\delta_0 &= v(\gamma),
&
\delta_1 &= v(\beta) - \frac{d_1+2d_2+d_4}{3},
&
\delta_2 &= v(\alpha) - \frac{2d_1+d_2+2d_4}{3},
&
\delta &= \max\left\{-\delta_0, -\delta_1, -\delta_2\right\}.
\end{align*}
We tabulate the $v$-values of the different products which appear in the coefficients of~$\chi(X)$:
$$
\begin{array}{c||c||c|c||c|c|c|c}
&c_2X^2 & \multicolumn{2}{c||}{c_1X} & \multicolumn{4}{c}{c_0}\\
\hline
\text{coef.}
&
\gamma
&
\alpha \beta f_1 f_2 f_4
&
\gamma^2
&
\alpha^3 f_1^2 f_2 f_4^2
&
\beta^3 f_1 f_2^2 f_4
&
\alpha \beta \gamma f_1 f_2 f_4
&
\gamma^3\\
\hline
v
&
\delta_0
&
\delta_1+\delta_2
&
2\delta_0
&
3\delta_2
&
3\delta_1
&
\delta_0+\delta_1+\delta_2
&
3\delta_0
\end{array}
$$
Hence we have~$v(c_i) \geq \delta(i-3)$.
This means that the points of the Newton polygon of~$\chi(X)$ are above the line~$j=\delta(i-3)$, i.e. at the gray points or above in the
following figure.
\begin{center}
\begin{tikzpicture}
\draw (-0.5,0) -- (3.5,0);
\draw (0,0.5) -- (0,-3.5);
\draw (0,0) node[above left] {0};
\foreach \x in {1,2,3} \draw (\x,3pt) node[above] {\x} -- (\x,-3pt) ;
\draw (3pt,-1) -- (-3pt,-1) node[left,anchor=east] {$-\delta$}; 
\foreach \y in {2,3} \draw (3pt,-\y) -- (-3pt,-\y) node[left,anchor=east] {$-\y\delta$}; 
\fill (3,0) circle (2pt);
\fill[gray] (2,-1) circle (2pt);\draw[gray,->] (2,-1) -- (2,-0.5);
\fill[gray] (1,-2) circle (2pt);\draw[gray,->] (1,-2) -- (1,-1.5);
\fill[gray] (0,-3) circle (2pt);\draw[gray,->] (0,-3) -- (0,-2.5);
\draw (0,-3) -- (3,0);
\end{tikzpicture}
\end{center}
In fact, at least one of the gray points are on the segment; indeed:
\begin{itemize}
\item if~$\delta = -\delta_0$, the right gray point~$(2,-\deg_t(c_2))$ lies on the segment;
\item if~$\delta$ is equal to exactly one of the~$(-\delta_i)$'s then the left gray point~$(0,-\deg_t(c_0))$ lies on the segment;
\item in the remaining case, when~$\delta = -\delta_1=-\delta_2<-\delta_0$, then the middle gray point~$(1,-\deg_t(c_1))$ lies on the segment.
\end{itemize}
Therefore the right-most segment of the polygon must be of slope~$\delta$. This proves that~$\delta = -w(\psi)$ if~$3\nmid d$
or~$\delta = \max_{1\leq i\leq 3} \left\{-w_i(\psi)\right\}$ if~$3\mid d$ and ends the proof.
\end{proof}

\begin{lem}[Key Lemma]
\label{lem:start}
Assume that $f=f_1f_2^2f_4^4$. Consider the composition
$$
E(k[t])\to E(k(t))/2E(k(t))\hookrightarrow\Pic(C_2,\mathbb{Q}.D_2)[2].
$$
If two points $P_0=(x_0(t),y_0(t))$ and $P_1=(x_1(t),y_1(t))$ in $E(k[t])$ have the same image by this map, then there exists a rational function
$\psi\in k(C_2)$, regular outside infinity, such that
\begin{equation}
\label{eq:psidef}
(x_0(t)-x)(x_1(t)-x)=\psi^2.
\end{equation}
Moreover:
\begin{enumerate}
\item there exist polynomials $a$, $b$ and $c$ in $k[t]$, with~$c\neq 0$, such that
$$
\psi =c+bx+a\frac{x^2}{f_2f_4^2}
$$
and we have the relations
\begin{subequations}
\begin{align}
& b^2+2\frac{ac}{f_2f_4^2} =1 \label{c2} \\
& 2bc-a^2f_1 = -(x_0+x_1) \label{c1} \\
& c^2-2abf_1f_2f_4^2 =x_0x_1. \label{c0}
\end{align}
\end{subequations}
\item\label{item:ifaeq0} if $a=0$ then $P_0=\pm P_1$.
\end{enumerate}
\end{lem}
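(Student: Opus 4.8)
The plan is to produce the square root $\psi$ from the hypothesis, to pin down its shape via Lemma~\ref{RRlem}, and then to extract \eqref{c0}--\eqref{c2}, the non-vanishing of $c$, and assertion~(ii) by comparing coefficients and computing one norm. First I would produce $\psi$ and prove \eqref{eq:psidef}. Since $\phi_2$ is injective (Proposition~\ref{prop:2descent}), the equality of the images of $P_0$ and $P_1$ gives $[P_0]=[P_1]$ in $E(k(t))/2E(k(t))$; applying the map $h^1\omega\circ\delta$, which on affine points is $(x_0,y_0)\mapsto[x_0-x]$ (see the proof of Proposition~\ref{prop:2descent}), yields $[x_0-x]=[x_1-x]$ in $k(C_2)^\times/2$. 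Hence $(x_0-x)/(x_1-x)=g^2$ for some $g\in k(C_2)^\times$, and $\psi:=(x_1-x)g$ satisfies \eqref{eq:psidef}. (If one prefers to use only $\phi_2(P_0)=\phi_2(P_1)$, exactness of \eqref{eq:kummerlog} gives $(x_0-x)(x_1-x)=\mu\psi_0^2$ with $\mu\in k^\times$, and then taking $N_{k(C_2)/k(t)}$ together with $N(x_i-x)=x_i^3+f=y_i^2$ shows that $\mu^3$, hence $\mu$, lies in $(k^\times)^2$.) Since $x$ is integral over $k[t]$, the functions $x_0-x$ and $x_1-x$, and therefore $\psi^2$, are regular away from infinity; as $C_2$ is smooth, $\psi$ itself is regular away from infinity.

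Next I would determine the shape of $\psi$ and derive \eqref{c0}--\eqref{c2}. By Lemma~\ref{RRlem}, write $\psi=\gamma+\beta\tfrac{x}{f_4}+\alpha\tfrac{x^2}{f_2f_4^2}$ with $\gamma,\beta,\alpha\in k[t]$. Squaring, using the multiplication rules $\bigl(\tfrac{x}{f_4}\bigr)^2=f_2\cdot\tfrac{x^2}{f_2f_4^2}$, $\tfrac{x}{f_4}\cdot\tfrac{x^2}{f_2f_4^2}=-f_1f_2f_4$, and $\bigl(\tfrac{x^2}{f_2f_4^2}\bigr)^2=-f_1f_4\cdot\tfrac{x}{f_4}$, and comparing with $(x_0-x)(x_1-x)=x_0x_1-(x_0+x_1)f_4\cdot\tfrac{x}{f_4}+f_2f_4^2\cdot\tfrac{x^2}{f_2f_4^2}$ in the $k[t]$-basis $1,\tfrac{x}{f_4},\tfrac{x^2}{f_2f_4^2}$, I obtain three polynomial identities. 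The coefficient of $\tfrac{x}{f_4}$ shows $f_4\mid 2\gamma\beta$; then at each zero $p$ of $f_4$ (simple, since $f_4$ is separable and coprime to $f_1f_2$), if $p\nmid\beta$ then $p\mid\gamma$, and feeding this into the identity coming from the coefficient of $\tfrac{x^2}{f_2f_4^2}$ forces $p\mid\beta$, a contradiction; hence $f_4\mid\beta$. Setting $c:=\gamma$, $b:=\beta/f_4\in k[t]$, $a:=\alpha$ rewrites $\psi=c+bx+a\tfrac{x^2}{f_2f_4^2}$, and the three identities, divided by the appropriate powers of $f_2,f_4$, are precisely \eqref{c2}, \eqref{c1}, \eqref{c0}.

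Finally I would prove $c\neq 0$ and assertion~\ref{item:ifaeq0}. From the identity $\prod_{j=0}^{2}(A+\zeta_3^{j}B+\zeta_3^{2j}C)=A^3+B^3+C^3-3ABC$ applied with $A=c$, $B=bx$, $C=a\tfrac{x^2}{f_2f_4^2}$ one computes $N_{k(C_2)/k(t)}(\psi)=c^3-b^3f_1f_2^2f_4^4+a^3f_1^2f_2f_4^2+3abc\,f_1f_2f_4^2$, while $N(\psi)^2=N(\psi^2)=y_0^2y_1^2$, so $N(\psi)=\pm y_0y_1$. If $c=0$, then $f_1\mid N(\psi)$, so some irreducible $p\mid f_1$ — which exists by \ref{f1nonct} — divides $y_0$ or $y_1$; but $v_p(y_i^2)=v_p(x_i^3+f)\in\{0,1\}$ because $v_p(f)=1$ (as $p\nmid f_2f_4$), forcing $v_p(y_i)=0$, a contradiction. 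Hence $c\neq 0$. For \ref{item:ifaeq0}, if $a=0$ the three relations collapse to $b^2=1$, $2bc=-(x_0+x_1)$, $c^2=x_0x_1$, whence $(x_0-x_1)^2=(x_0+x_1)^2-4x_0x_1=4c^2-4c^2=0$; thus $x_0=x_1$, so $y_1=\pm y_0$ and $P_0=\pm P_1$.

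The main obstacle is the divisibility $f_4\mid\beta$ in the middle step: the descent apparatus only places $\psi$ in the order of Lemma~\ref{RRlem}, and upgrading $\beta$ to a multiple of $f_4$ — which is exactly what turns $\psi$ into a genuine $k[t]$-combination of $1,x,\tfrac{x^2}{f_2f_4^2}$ — requires combining two of the coefficient relations with a place-by-place valuation argument at the zeros of $f_4$. Everything after that is elementary, the only external inputs being $\mathrm{char}\,k\neq 2,3$ and the nonconstancy of $f_1$.
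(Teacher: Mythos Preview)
Your proof is correct and follows essentially the same route as the paper's: produce $\psi$ from the descent relation, expand it via Lemma~\ref{RRlem}, compare coefficients to obtain \eqref{c2}--\eqref{c0}, and run the same place-by-place valuation argument to force $f_4\mid\beta$. The only notable difference is your proof of $c\neq 0$: the paper reads it off directly from \eqref{c0} (if $c=0$ then $f_1\mid x_0x_1$, contradicting $v_p(x_i^3+f)=1$ at any irreducible $p\mid f_1$), whereas you pass through the norm $N_{k(C_2)/k(t)}(\psi)=\pm y_0y_1$ to reach the analogous conclusion $f_1\mid y_0y_1$. Both arguments collapse to the same valuation contradiction at a zero of $f_1$, so this is a harmless detour rather than a genuinely different idea.
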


\begin{rmq}
In Lemma~\ref{lem:start} (i), the statement that $c\neq 0$ fails when $f_1$ is constant. But in fact there are counterexamples to Theorem~\ref{thm:main} in this case, as was proved by Sch{\"u}tt and Schweizer \cite[Theorem~1.2 (a)]{SS}.
\end{rmq}

\begin{proof}
The above composition associates to $(x_0(t),y_0(t))$ the divisor class of the divisor $D_0=\frac{1}{2}\divisor(x_0(t)-x)$ in $\Pic(C_2,\mathbb{Q}.D_2)[2]$, where we view $x_0(t)-x$ as a rational function in $k(C_2)$.  Obviously, the two distinct points $(x_0(t),y_0(t))$ and $(x_0(t),-y_0(t))$ map to the same element of $\Pic(C_2,\mathbb{Q}.D_2)[2]$ (or $E(k(t))/2E(k(t))$).  Let $(x_1(t),y_1(t))\in  E(k[t])$ be another point and let $D_1=\frac{1}{2}\divisor(x_1(t)-x)$.  Suppose that $[D_0]=[D_1]$ in $\Pic(C_2,\mathbb{Q}.D_2)[2]$, or equivalently, $D_0-D_1$ is a principal divisor.  Let $\divisor(\phi)=D_0-D_1$, where $\phi\in k(C_2)$.  Then $\divisor(\phi^2)=2D_0-2D_1=\divisor\left(\frac{x_0(t)-x}{x_1(t)-x}\right)$ and it follows that
$$
(x_0(t)-x)(x_1(t)-x)=\psi^2
$$
for some $\psi\in k(C_2)$.  Since the function on the left-hand side is regular outside infinity, one deduces that $\psi$ is regular outside infinity.

According to Lemma~\ref{RRlem}, there exist polynomials $\alpha$, $\beta$ and $\gamma$ such that
$$
\psi =\gamma+\beta \frac{x}{f_4} +\alpha\frac{x^2}{f_2f_4^2}.
$$

By identifying the coefficients of $1$, $x$ and $x^2$ in the identity \eqref{eq:psidef}, we find that
\begin{subequations}
\begin{align}
& \frac{\beta^2}{f_4^2}+2\frac{\alpha\gamma}{f_2f_4^2} =1 \label{b1} \\
& 2\frac{\beta\gamma}{f_4}-\alpha^2f_1 = -(x_0+x_1) \label{b2} \\
& \gamma^2-2\alpha\beta f_1f_2f_4 =x_0x_1. \label{b3}
\end{align}
\end{subequations}

First, we shall prove that $f_4$ divides $\beta$. Assume by contradiction that this is not the case; then there exists an irreducible factor $\pi$ of $f_4$ which does not divide $\beta$. According to \eqref{b2}, $\frac{\beta\gamma}{f_4}$ is a polynomial, hence $\pi\mid \gamma$. On the other hand, according
to \eqref{b1}, we have
$$
\beta^2= f_4^2-2\frac{\alpha\gamma}{f_2}.
$$
But we know that $\pi\nmid f_2$ because $f_2$ and $f_4$ are coprime. It follows that $\pi$ divides the polynomial $\frac{\alpha\gamma}{f_2}$, and hence $\pi$ divides $\beta^2$, a contradiction.

This proves that $f_4$ divides $\beta$. In order to prove (i), it remains to prove that $\gamma\neq 0$. Assume by contradiction that $\gamma=0$; then it follows from \eqref{b3} that $f_1$ divides $x_0x_1$. But $f_1$ is nonconstant \ref{f1nonct} hence, up to exchanging $x_0$ and $x_1$, the polynomials $f_1$ and $x_1$ have an irreducible factor in common, that we denote by $\pi$.
On the other hand, the point $P_1=(x_1,y_1)$ lies on $E$, hence
$$
y_1^2=x_1^3+f_1f_2^2f_4^4.
$$
The polynomial $f_1$ being separable, its $\pi$-adic valuation is exactly one. Therefore, the $\pi$-adic valuation of $x_1^3+f_1f_2^2f_4^4$ is also one, which is impossible since this polynomial is a square.

It remains to prove (ii). If $\alpha=0$, then, up to a choice of sign for $\psi$, we find that
$$
\beta=f_4, \quad 2\gamma =-(x_0+x_1), \quad \gamma^2=x_0x_1,
$$
which implies that $x_0=x_1=-\gamma$. It follows that $P_0=\pm P_1$.

Finally, letting $a:=\alpha$, $b:=\beta/f_4$ and $c:=\gamma$, the relations \eqref{b1}, \eqref{b2} and \eqref{b3} are none other than the relations at the end of the statement.
\end{proof}

\begin{proof}[Main steps of the proof of Theorem~\ref{thm:main}]
The goal of the proof is to estimate the cardinality of fibers of
the natural map~$E(k[t]) \to E(k(t))/2E(k(t))$. To this end we use the key Lemma~\ref{lem:start}: let $P_0=(x_0(t),y_0(t))$ and $P_1=(x_1(t),y_1(t))$
be two points in $E(k[t])$ such that $P_0$ and $P_1$ define the same class in $E(k(t))/2E(k(t))$; then there exists a rational
function~$\psi\in k(C_2)$, regular outside infinity, such that
\begin{align}\label{eq:start}
&\psi^2=(x_0(t)-x)(x_1(t)-x)
&
&\text{and}
&
&\psi = c + b x + a\frac{x^2}{f_2f_4^2}
\end{align}
for some~$a,b,c\in k[t]$ with~$c\not=0$ and satisfying~\eqref{c0},\eqref{c1},\eqref{c2}.
Thanks to Lemma~\ref{RRlem}, these two expressions give two ways of computing~$\ord_\infty^\star (\psi)_\infty$.
At that stage, it is natural to distinguish between the cases of points of \emph{small height} ($\leq \frac{d}{6}$) and of \emph{large height}
($> \frac{d}{6}$) since the divisor of poles of the function~$x_i(t)-x$  behaves differently in each case. In any case, thanks to Lemma~\ref{lem:start}~\ref{item:ifaeq0},
we are reduced to proving that~$a = 0$; see the proofs of items~\ref{item:mainthsmallht}, \ref{item:mainthlargeht} and~\ref{item:mainthfeqf1} below for the details.

Once the fibers are bounded above in cardinality by a power of~$2$, we easily deduce an upper bound for~$\# E(k[t])$ that involves the size
of the target set inside~$E(k(t))/2E(k(t))$, which is at most~$2^{\rk_\Z E(k(t))}$ (or this size minus~$1$). Finally, from the ``geometric'' upper bound
on $\rk_\Z E(k(t))$ stated in Corollary~\ref{cor:2descent}, 
we are able to deduce an upper bound on $\# E(k[t])$ which depends only on the $d_i$ and the $\omega_i$.
\end{proof}

\begin{proof}[Proof of Theorem~\ref{thm:main}~\ref{item:mainthsmallht}]
The two formulas in~$\eqref{eq:start}$ give two ways to compute~$\ord_\infty^\star (\psi)_\infty$.
On one hand,
by Lemma~\ref{RRlem}, we know that~$\ord_\infty^\star\left(x_i(t)-x\right)_\infty = \max\{3\deg_t x_i, d\} = d$, 
which yields
\begin{equation}
\ord_\infty^\star(\psi)_\infty = d.
\end{equation}
On the other hand, by the same Lemma,
\begin{equation}
\ord_\infty^\star(\psi)_\infty = \max\left\{3\deg_t c, 3\deg_t b + d, 3\deg_t a + 2d_1 + d_2 + 2d_4\right\}.
\end{equation}
Comparing these two formulas and taking into account the facts that~$d = d_1+2d_2+4d_4$ and that~$3\nmid d$ leads to
\begin{subequations}
  \begin{align}
    3\deg_t c&< d \label{small_ht_deg_0}\\
    \deg_t b &\leq 0 \label{small_ht_deg_1}\\
    3\deg_t a + d_1 &\leq d_2 + 2d_4.\label{small_ht_deg_2}
  \end{align}
\end{subequations}
The second inequality means that~$b$ is a constant polynomial;
the combination~$\frac{1}{3}\left[\eqref{small_ht_deg_0}+\eqref{small_ht_deg_2}\right]$ yields~$\deg_t a + \deg_t c < d_2+2d_4$.
Since by \eqref{c2}, $2ac = (1-b^2)f_2f_4^2$, we must have~$ac = 0$, and thus~$a=0$ because~$c\not=0$.
\end{proof}

\begin{proof}[Proof of Theorem~\ref{thm:main}~\ref{item:mainthlargeht}]
First, if an integral point~$P=(x(t),y(t))$ satisfies~$h(P) > \frac{d}{6}$, then, by definition of the height, either~$3\deg_t x(t) > d$ or~$2\deg_t y(t) > d$. Since~$y(t)^2 = x(t)^3 + f(t)$, with~$\deg_t f(t) =d$, necessarily, both degrees are greater than~$d$
and~$3\deg_t x(t) = 2\deg_t y(t)$. Consequently,~$2\mid \deg_t x(t)$, $3\mid \deg_t y(t)$
and~$h(P) = \frac{\deg_t x(t)}{2} = \frac{\deg_t y(t)}{3}$ is a positive integer.

Secondly, according to Davenport's inequality (Corollary~\ref{Dcor}), we have
$$
E(k[t])_{>d/6} = \{P\in E(k[t]), d/6 < h(P) \leq d-1\}.
$$
We now write $E(k[t])_{>d/6}$ as the union of two subsets:
$$
E(k[t])_{>d/6} = \{P\in E(k[t]), d/6 < h(P) \leq d/2 \} \cup \{P\in E(k[t]), d/2 < h(P) \leq d-1 \}.
$$
We observe that, if $P$ belongs to one of these subsets, then $3h(P)$ is an upper bound on the height of elements in the same subset. The result then follows by applying Lemma~\ref{tinj2} below to each of these two subsets.
\end{proof}

\begin{lem}
\label{tinj2}
Let $P_i=(x_i(t),y_i(t))\in E(k[t])_{>d/6}$, $i=0,1$.  If $h(P_1)\leq h(P_0)< 3h(P_1)$, $\gcd(x_0(t),f_2f_4)=\gcd(x_1(t),f_2f_4)$, and $P_0$ and $P_1$ have the same image in $E(k(t))/2E(k(t))$, then $P_0=\pm P_1$.
\end{lem}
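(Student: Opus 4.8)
The plan is to apply the Key Lemma~\ref{lem:start} and thereby reduce the statement to the assertion $a=0$. Suppose that $P_0$ and $P_1$ have the same image in $E(k(t))/2E(k(t))$. By Lemma~\ref{lem:start} there is a rational function
$$\psi=c+bx+a\frac{x^2}{f_2f_4^2}\in k(C_2),$$
regular outside infinity, with $c\neq 0$, satisfying $(x_0(t)-x)(x_1(t)-x)=\psi^2$ together with the relations \eqref{c0}, \eqref{c1}, \eqref{c2}; and by Lemma~\ref{lem:start}~\ref{item:ifaeq0}, $a=0$ implies $P_0=\pm P_1$, which is the conclusion. So assume for contradiction that $a\neq 0$; we may also assume $x_0(t)\neq x_1(t)$, as $x_0=x_1$ forces $P_0=\pm P_1$ via $y_i^2=x_i^3+f$.

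First I would run the degree bookkeeping of the proof of Theorem~\ref{thm:main}~\ref{item:mainthsmallht}, now in the large-height regime. Since $h(P_i)>d/6$, the analysis in the proof of Theorem~\ref{thm:main}~\ref{item:mainthlargeht} shows $\deg_t x_i=2h(P_i)$, so by Lemma~\ref{RRlem} one has $\ord_\infty^\star(x_i(t)-x)_\infty=3\deg_t x_i=6h(P_i)>d$, whence $\ord_\infty^\star(\psi)_\infty=3(h(P_0)+h(P_1))=:3m$. Comparing with the second formula of Lemma~\ref{RRlem} and using $3\nmid d$ (so that $3\deg_t b+d$ and $3\deg_t a+2d_1+d_2+2d_4$ are both $\not\equiv 0\pmod 3$, the latter since $2d_1+d_2+2d_4\equiv 2d$, while $3m\equiv 0$), the maximum there is realized only by $3\deg_t c$; hence
$$\deg_t c=m,\qquad 3\deg_t b+d<3m,\qquad 3\deg_t a+2d_1+d_2+2d_4<3m.$$
From the last two inequalities a short computation gives $\deg_t(a^2f_1)<\deg_t(2bc)$, so \eqref{c1} yields $\deg_t(x_0+x_1)=\deg_t b+m$; since $\deg_t(x_0+x_1)\le\max\{\deg_t x_0,\deg_t x_1\}=2h(P_0)$, we obtain $\deg_t b\le h(P_0)-h(P_1)$. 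Next, \eqref{c2} reads $2ac=f_2f_4^2(1-b^2)$: if $b^2=1$ then $ac=0$, so $a=0$, a contradiction; otherwise, comparing degrees in \eqref{c2} together with $\deg_t b\le h(P_0)-h(P_1)$ and the hypothesis $h(P_0)<3h(P_1)$ gives
$$0\le\deg_t a<d_2+2d_4=\deg_t(f_2f_4^2).$$
It is exactly at this point that the height window $h(P_0)<3h(P_1)$ has been used.

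It remains to contradict the inequality $\deg_t a<d_2+2d_4$, and here the hypothesis $\gcd(x_0,f_2f_4)=\gcd(x_1,f_2f_4)=:g$ is essential. Reducing \eqref{c0} modulo $f_2f_4$ and using the gcd hypothesis gives $\gcd(c,f_2f_4)=g$; substituting into \eqref{c2} then forces every irreducible factor of $f_2$ prime to $g$ to divide $a$, and---using that $y_i^2=x_i^3+f$ forces every irreducible factor of $f_4$ dividing $x_i$ to divide $x_i$ to order at least $2$---every irreducible factor of $f_4$ prime to $g$ to divide $a$ to order at least $2$. This yields a lower bound
$$\deg_t a\ \geq\ \deg_t\!\left(\frac{f_2}{\gcd(x_0,f_2)}\right)+2\deg_t\!\left(\frac{f_4}{\gcd(x_0,f_4)}\right),$$
and combining it with the upper bound above, with the divisibility $\gcd(x_0,f_2)\cdot\gcd(x_0,f_4)^2\mid x_0-x_1\neq 0$, and with $\deg_t x_i=2h(P_i)$, $h(P_1)\le h(P_0)$, one extracts the contradiction and concludes $a=0$. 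I expect this last, finite-place step to be the main obstacle: the degrees of the relevant divisors of $g$ must be balanced carefully against $h(P_0)$, $h(P_1)$ and the $d_i$, and one has to check that the inequalities close up in the remaining borderline configurations---in particular when $h(P_0)=h(P_1)$, where the $\ord_\infty^\star$ estimates alone do not suffice and the decisive input is the comparison between the reductions of $P_0$ and $P_1$ supplied by the gcd hypothesis.
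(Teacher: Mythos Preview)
Your overall strategy---reduce to $a=0$ via the Key Lemma, establish $\deg_t a<d_2+2d_4$ from the pole analysis together with the height window $h(P_0)<3h(P_1)$, then extract a contradiction from the gcd hypothesis---is exactly the paper's strategy. There is one minor ordering slip: the claim $\deg_t(a^2f_1)<\deg_t(bc)$ does \emph{not} follow from the two strict pole inequalities alone. In the paper one first invokes \eqref{c2} (treating $b=0$ separately) to get $d_2+2d_4=\deg_t a+\deg_t c-2\deg_t b$, substitutes this into the sum of the two pole inequalities, and only then obtains $2\deg_t a+d_1<\deg_t b+\deg_t c$; from there \eqref{c1} gives $\deg_t b\le h(P_0)-h(P_1)$ and the bound $\deg_t a<d_2+2d_4$ follows. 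Your argument can be rearranged this way, so this is not fatal.

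The genuine gap is the final step, and your own suspicion is justified: the inequalities you assemble do not close. Your lower bound on $\deg_t a$ (from irreducible factors of $f_2f_4$ prime to $g$), combined with $\deg_t a<d_2+2d_4$, yields only $\deg_t\gcd(g,f_2)+2\deg_t\gcd(g,f_4)>0$, i.e.\ $g\neq 1$. The divisibility $\gcd(x_0,f_2)\cdot\gcd(x_0,f_4)^2\mid x_0-x_1$ bounds that same quantity \emph{above} by $2h(P_0)$, which is no contradiction. The paper does not attempt a global degree squeeze here; instead it proves (Lemma~\ref{lem:irreducible_pi}) that $\deg_t a<d_2+2d_4$ forces $f_2f_4\nmid a$, via a $\pi$-adic valuation cascade through \eqref{c0} and \eqref{c2} showing that $f_2f_4\mid a$ would in fact force $f_2f_4^2\mid a$. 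Hence some irreducible $\pi\mid f_2f_4$ satisfies $\pi\nmid a$; since $f_2f_4^2\mid ac$ by \eqref{c2}, this $\pi$ divides $c$, and then (by your own observation $\gcd(c,f_2f_4)=g$) also $x_0,x_1$. Now \eqref{c1} gives $\pi\mid a^2f_1$, contradicting $\pi\nmid a$ and $\gcd(f_1,f_2f_4)=1$. The decisive contradiction thus comes from applying \eqref{c1} at a single well-chosen prime, not from a degree count.
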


\begin{proof}
As in the case of points of small height, the formulas contained in~\eqref{eq:start} give two ways to compute~$\ord_\infty^\star (\psi)_\infty$. 
On one hand, by Lemma~\ref{lem:start}, since~$\deg_t x_i > \frac{d}{3}$, we know
that
$$
\ord_\infty^\star\left(x_i(t)-x\right)_\infty = \max\left\{3\deg_t x_i, d\right\} = 3\deg_t x_i.
$$
Since~$\psi^2=(x_0(t)-x)(x_1(t)-x)$, we have
\begin{equation}\label{eq:poles_x0x1}
\ord_\infty^\star(\psi)_\infty = 3\left(\frac{\deg_t x_0}{2}+\frac{\deg_t x_1}{2}\right) = 3\left(h(P_0)+h(P_1)\right).
\end{equation}
On the other hand, by Lemma~\ref{lem:start}, we also know that
\begin{equation}\label{eq:poles_abc}
\ord_\infty^\star(\psi)_\infty = \max\left\{3\deg_t c, 3\deg_t b + d_1+2d_2+d_4, 3\deg_t a + 2d_1+d_2+2d_4\right\}.
\end{equation}
We have that~$d_1+2d_2+d_4 \equiv d \pmod{3}$,~$2d_1+d_2+2d_4 \equiv 2d \pmod{3}$ and~$3 \nmid d$, hence~$3\deg_t c$ is the only integer that is divisible
by~$3$ in the preceding maximum. Putting together \eqref{eq:poles_x0x1}
and~\eqref{eq:poles_abc}, we deduce that
\begin{subequations}
\begin{align}
  \deg_t c = h(P_0)+h(P_1) \label{delta0}\\
  3\deg_t b + d_1+2d_2+4d_4 < 3\deg_t c \label{delta1}\\
  3\deg_t a + 2d_1+d_2+2d_4 < 3\deg_t c. \label{delta2}
\end{align}
\end{subequations}

We want to prove that~$a=0$, and so we assume, by contradiction, that~$a\not=0$. Recall that $c\not= 0$ by Lemma~\ref{lem:start}. Then by
formula~\eqref{c2} one has~$b^2-1 = \frac{2ac}{f_2f_4^2}$.

  If~$b = 0$, we deduce that~$\deg_t(a) = d_2 + 2d_4 - \deg_t c < d_2 + 2d_4$ since~$\deg_t c = h(P_0)+h(P_1) > 0$. This inequality still holds
if~$b\not=0$. Indeed, first observe that the two nonzero polynomials~$(b^2-1)f_2f_4^2$ and~$2ac$ have the same degree, that is
\begin{equation}\label{eq:d2plus2d4}
d_2+2d_4 = \deg_t a + \deg_t c - 2\deg_t b.
\end{equation}
Replacing this expression of~$d_2+2d_4$ in the sum of~\eqref{delta1} and~\eqref{delta2}
leads to~$2\deg_t a + d_1 < \deg_t b + \deg_t c$. Using~\eqref{c1}, we deduce that~$\deg_t b + \deg_t c = \deg(x_0+x_1)$
and then
\begin{align*}
  \deg_t b &\leq \max\{2h(P_0),2h(P_1)\} - \deg_t c &&\text{since~$\deg_t x_i = 2h_i$}\\
           &\leq 2h(P_0) - \deg_t c && \text{since~$h(P_1)\leq h(P_0)$ by hypothesis}\\
           &\leq 2h(P_0) - (h(P_0)+h(P_1)) = h(P_0)-h(P_1)&&\text{by~\eqref{delta0}}\\
           &< \frac{h(P_0)+h(P_1)}{2}  &&\text{since~$h(P_0) < 3h(P_1)$ by hypothesis.}
\end{align*}
Hence~$\deg_t c - 2\deg_t b > 0$ and \eqref{eq:d2plus2d4} implies that~$\deg_t a < d_2+2d_4$.

According to Lemma~\ref{lem:irreducible_pi}, there exists an irreducible~$\pi\in k[t]$ that divides~$x_0,x_1,f_2f_4,c$ but not~$a$.
Thanks to~\eqref{c1},~$\pi \mid a^2f_1$; this is impossible since~$\gcd(f_1,f_2f_4)=1$ and~$\pi\nmid a$. Therefore~$a=0$.
\end{proof}

\begin{lem}\label{lem:irreducible_pi}
  Suppose that~$\deg_t a < d_2 + 2d_4$ and that~$\gcd(x_0,f_2f_4) = \gcd(x_1,f_2f_4)$. Then there exists an irreducible polynomial that
divides~$x_0,x_1,f_2f_4$ and~$c$ but not~$a$.
\end{lem}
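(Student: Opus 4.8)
The plan is to reduce the statement to an elementary divisibility count and argue by contradiction. Keep the notation of Lemma~\ref{lem:start}: $\psi=c+bx+a\frac{x^2}{f_2f_4^2}$ with $a,b,c\in k[t]$, $c\neq 0$, and the relations \eqref{c0}, \eqref{c1}, \eqref{c2}; below only \eqref{c0} and \eqref{c2} will be used. We may assume $a\neq 0$ (if $a=0$ then $P_0=\pm P_1$ by Lemma~\ref{lem:start}~\ref{item:ifaeq0}, and the conclusion is not needed). Note $b^2\neq 1$, since otherwise \eqref{c2} gives $2ac=0$, which is impossible. Also $f_2f_4$ is separable by \ref{f12}, so $d_2=\deg_t f_2$ and $d_4=\deg_t f_4$ equal the sums of the degrees of the distinct irreducible factors of $f_2$ and of $f_4$. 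Suppose, for a contradiction, that no irreducible polynomial divides all of $x_0,x_1,f_2f_4$ and $c$ while failing to divide $a$. I would then show this forces $\deg_t a\geq d_2+2d_4$, contradicting the hypothesis $\deg_t a<d_2+2d_4$.

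\textbf{A dichotomy, and local bounds on $v_\pi(a)$.} The first step is a case distinction for each irreducible $\pi\mid f_2f_4$, combining $\gcd(x_0,f_2f_4)=\gcd(x_1,f_2f_4)$ with \eqref{c0}. If $\pi\mid x_0$, then also $\pi\mid x_1$, hence $\pi\mid x_0x_1$; since $\pi$ divides the term $2abf_1f_2f_4^2$ of \eqref{c0}, we get $\pi\mid c^2$, so $\pi\mid c$. Thus $\pi$ divides $x_0,x_1,f_2f_4,c$, and by the contradiction hypothesis $\pi\mid a$. If instead $\pi\nmid x_0$, then $\pi\nmid x_1$, so $\pi\nmid x_0x_1$, and reading \eqref{c0} again gives $\pi\nmid c$. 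The second step estimates $v_\pi(a)$ using \eqref{c2} in the form $2ac=(1-b^2)f_2f_4^2$ (legitimate $\pi$-adically because $1-b^2\neq 0$). For $\pi\mid f_2$: if $\pi\mid x_0$ then $\pi\mid a$; if $\pi\nmid x_0$ then $\pi\nmid c$, and comparing $\pi$-adic valuations in \eqref{c2} gives $v_\pi(a)=v_\pi(1-b^2)+v_\pi(f_2f_4^2)=v_\pi(1-b^2)+1\geq 1$. Hence $v_\pi(a)\geq 1$ for every $\pi\mid f_2$. For $\pi\mid f_4$ I claim $v_\pi(a)\geq 2$: if $\pi\nmid x_0$, the same comparison in \eqref{c2} gives $v_\pi(a)=v_\pi(1-b^2)+v_\pi(f_2f_4^2)=v_\pi(1-b^2)+2\geq 2$; if $\pi\mid x_0$ (so $\pi$ divides $x_1,c,a$ as well), I would use that $P_0,P_1$ are integral points: from $y_i^2=x_i^3+f$ with $v_\pi(f)=4$ and $v_\pi(y_i^2)$ even, the value $v_\pi(x_i)=1$ is impossible (it would give $v_\pi(y_i^2)=\min(3,4)=3$), so $v_\pi(x_i)\geq 2$ and $v_\pi(x_0x_1)\geq 4$; feeding this into \eqref{c0} and \eqref{c2}, a short case analysis on whether $\pi\mid b$ rules out $v_\pi(a)=1$ and leaves $v_\pi(a)\geq 2$.

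\textbf{Conclusion and main obstacle.} Summing the local bounds over the distinct irreducible factors of $f_2$ and $f_4$ yields $\deg_t a\geq \sum_{\pi\mid f_2}\deg_t\pi+2\sum_{\pi\mid f_4}\deg_t\pi=d_2+2d_4$, the desired contradiction; this proves the lemma. The main obstacle is exactly the case $\pi\mid f_4$, $\pi\mid x_0$, where \eqref{c0} and \eqref{c2} alone give only $v_\pi(a)\geq 1$: the decisive extra input making the degree count close is the arithmetic fact that at a prime dividing $f_4$ the $x$-coordinate of an integral point of $E$ vanishes to order at least $2$, which upgrades the bound to $v_\pi(a)\geq 2$. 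A minor point to track is that \eqref{c2} can be divided through only because $1-b^2\neq 0$, which we deduced from $a,c\neq 0$.
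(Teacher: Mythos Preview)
Your proof is correct and follows essentially the same route as the paper's: both argue by contradiction and use the Weierstrass equation at primes $\pi\mid f_4$ to force $v_\pi(x_i)\geq 2$, then combine \eqref{c0} and \eqref{c2} to push $v_\pi(a)\geq 2$, reaching $\deg_t a\geq d_2+2d_4$. The only organizational difference is that you take the direct negation of the conclusion as your contradiction hypothesis and then deduce $f_2f_4\mid a$ along the way, whereas the paper takes $f_2f_4\mid a$ as its contradiction hypothesis from the outset (leaving the easy passage from $f_2f_4\nmid a$ to the existence of the desired $\pi$ implicit); your ``short case analysis'' for $\pi\mid f_4$, $\pi\mid x_0$ is exactly the paper's argument that $\pi\mid b$ is impossible (via \eqref{c2}) and then $v_\pi(a)=1$ would make $v_\pi(c^2)$ odd (via \eqref{c0}).
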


\begin{proof}
By \eqref{c2}, we know that~$f_2f_4^2$ divides~$ac$. Assume, by contradiction, that $f_2f_4$ divides $a$. Let $a':=a/f_2$; then since
$\deg_t a < d_2 + 2d_4$ we have that $\deg a' < 2d_4$, and since $f_4^2$ divides $a'c$ there must exist an irreducible polynomial~$\pi\in k[t]$ which divides~$f_4,a$ and~$c$.

By~\eqref{c0}, necessarily~$\pi$ divides~$x_0x_1$, and hence it divides one of the~$x_i$ and thus
it divides both of them since~$\gcd(x_0,f_2f_4) = \gcd(x_1,f_2f_4)$. Going back to the Weierstrass equation~$y_i^2 = x_i^3 + f_1f_2^2f_4^4$,
we deduce that, in fact,~$\pi^2$ must divide~$x_0$ and~$x_1$. Then~$\pi^3 \mid abf_1f_2f_4^2$, $\pi^4 \mid x_0x_1$ and by~\eqref{c0},
necessarily~$\pi^2 \mid c$. Using~\eqref{c0} again, we prove that~$\pi^2 \mid ab$. We remark that we cannot have~$\pi \mid b$
since otherwise, by~\eqref{c2}, this would imply that~$\pi \mid 1$. Hence~$\pi^2 \mid a$. Since this is true for every irreducible polynomial
dividing~$f_4$, we deduce that~$f_2f_4^2$ divides~$a$, which contradicts the assumption that~$\deg_t a < d_2 + 2d_4$.
\end{proof}

\begin{proof}[Proof of Theorem~\ref{thm:main}~\ref{item:mainthfeqf1}.] 
When the polynomial~$f$ is separable, i.e. when~$f=f_1$, the upper bound can be sharpened. Instead of dividing the height
range~$\left[0,d\right[$ in three pieces~$\left[0,\frac{d}{6}\right]$, $\left]\frac{d}{6},\frac{d}{2}\right]$
and~$\left]\frac{d}{2},d\right[$, we divide it in two pieces only, namely~$\left[0,\frac{d}{3}\right[$ and~$\left[\frac{d}{3},d\right[$.

Firstly, we prove that the natural map~$E(k[t])_{<d/3} \to E(k(t))/2E(k(t))$ is $2$-to-$1$ onto its image and omits~$0$. As usual, the two
formulas~$\eqref{eq:start}$ give two ways of computing~$\ord_\infty^\star (\psi)_\infty$.
On one hand, since~$3\deg_t x_i < 2d$, by Lemma~\ref{RRlem}, we know that~$\ord_\infty^\star\left(x_i(t)-x\right)_\infty = \max\{3\deg_t x_i, d\} < 2d$
which yields~$\ord_\infty^\star(\psi)_\infty < 2d$. On the other hand, by the same
Lemma, if~$a\not=0$, then~$\ord_\infty^\star(\psi)_\infty = \max\left\{3\deg_t c, 3\deg_t b + d, 3\deg_t a + 2d\right\} \geq 2d$, and we reach
a contradiction. Therefore~$a=0$ and Lemma~\ref{lem:start} allows us to conclude the argument.

Secondly, we prove that the natural map~$E(k[t])_{\geq d/3} \to E(k(t))/2E(k(t))$ is $2$-to-$1$ onto its image: if two points~$P_0,P_1 \in E(k[t])_{\geq d/3}$
with~$h(P_1)\leq h(P_0)$ have the same image then~$h(P_0) < 3h(P_1)$ and Lemma~\ref{tinj2} allows us to conclude the argument.

The sharpened upper bound follows.
\end{proof}

It turns out that a mix of the techniques used to bound the number of integral points of small and large height when~$3\nmid d$ can
be gathered to obtain an upper bound on the number of integral points of small height when~$3\mid d$.

\begin{prop}\label{prop:smallheigt3midd}
Assume hypotheses~\ref{field_k} -- \ref{f1nonct} above. Assume in addition that $f=f_1f_2^2f_4^4$ and that $3\mid d$. For $i=2,4$, let $\omega_i=\omega(f_i)$ be the number of irreducible factors of $f_i$. Let us consider the natural map
\begin{align*}
E(k[t])\to E(k(t))/2E(k(t)).
\end{align*}
When restricted to~$E(k[t])_{\leq d/6}$, this map is at most $2^{\omega_2+\omega_4+1}$-to-$1$ onto its image, and omits $0$. In particular,
$$
\#E(k[t])_{\leq d/6} \leq 2^{\rk_{\Z} E(k(t))+\omega_2+\omega_4+1}-2
\leq \left\{
\begin{array}{ll}
2^{2d_1+3d_2+d_4-3} -2 & \text{if}\enspace d\equiv 0\pmod{6} \\
2^{2d_1+3d_2+d_4-1} -2 & \text{if}\enspace d\equiv 3\pmod{6}.
\end{array}\right.
$$
\end{prop}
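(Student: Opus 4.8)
The plan is to bound the fibers of the natural map $\mu\colon E(k[t])_{\le d/6}\to E(k(t))/2E(k(t))$ and to show that $\mu$ omits $0$, and then to convert this into the numerical bounds via Corollary~\ref{cor:2descent}. So suppose $P_0=(x_0(t),y_0(t))$ and $P_1=(x_1(t),y_1(t))$ in $E(k[t])_{\le d/6}$ have the same image under $\mu$. By the Key Lemma~\ref{lem:start} there is $\psi=c+bx+a\frac{x^2}{f_2f_4^2}\in k(C_2)$, regular outside infinity, with $c\neq 0$, such that $(x_0(t)-x)(x_1(t)-x)=\psi^2$ and \eqref{c2},\eqref{c1},\eqref{c0} hold; by Lemma~\ref{lem:start}\,\ref{item:ifaeq0} it suffices to prove $a=0$. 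I would compute $\ord_\infty^\star(\psi)_\infty$ in two ways using the $3\mid d$ formula of Lemma~\ref{RRlem}. Since $\deg_t x_i\le d/3$ and the three leading coefficients of $x$ at $\infty_1,\infty_2,\infty_3$ are the three (distinct, as $\operatorname{char}k\neq 3$) cube roots of the leading coefficient of $-f$, each factor $x_i(t)-x$ has a pole of order exactly $d/3$ at at least two of those three points; two subsets of $\{\infty_1,\infty_2,\infty_3\}$ of size at least $2$ must meet, so $\ord_\infty^\star(\psi)_\infty=d/3$. On the other hand $\psi$ has coordinates $(\gamma,\beta,\alpha)=(c,\,bf_4,\,a)$ in the basis of Lemma~\ref{RRlem}, and since $\deg_t(bf_4)+\tfrac{d_1+2d_2+d_4}{3}=\deg_t b+\tfrac d3$, comparing the two values gives $\deg_t c\le d/3$, $\deg_t b\le 0$, and $3\deg_t a+d_1\le d_2+2d_4$.

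The new difficulty, relative to the case $3\nmid d$, is that these inequalities yield only $\deg_t a+\deg_t c\le d_2+2d_4$ \emph{without} strictness (now $\deg_t c=d/3$ is allowed), so one cannot deduce $ac=0$ from $2ac=(1-b^2)f_2f_4^2$. Instead I would stratify $E(k[t])_{\le d/6}$ by the value of $\gcd(x_i(t),f_2f_4)$ — at most $2^{\omega_2+\omega_4}$ possibilities, since $f_2f_4$ is separable with $\omega_2+\omega_4$ irreducible factors — and show $a=0$ whenever $\gcd(x_0,f_2f_4)=\gcd(x_1,f_2f_4)$. From $3\deg_t a+d_1\le d_2+2d_4$ and $d_1\ge 1$ (hypothesis~\ref{f1nonct}) one gets $\deg_t a<d_2+2d_4$ (and $a=0$ directly when $d_2=d_4=0$), so Lemma~\ref{lem:irreducible_pi} applies and furnishes an irreducible $\pi\in k[t]$ dividing $x_0$, $x_1$, $f_2f_4$ and $c$ but not $a$; then \eqref{c1} forces $\pi\mid a^2f_1$, which is impossible since $\pi\nmid a$ and $\gcd(f_1,f_2f_4)=1$. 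Hence $a=0$, so within each $\gcd$-stratum $\mu$ is $2$-to-one onto its image, and therefore $\mu$ is at most $2^{\omega_2+\omega_4+1}$-to-one onto its image. (This is the small-height counterpart of Lemma~\ref{tinj2}.)

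For the omission of $0$: if $\phi_2(P_0)=0$ for some $P_0\in E(k[t])_{\le d/6}$, then $x_0(t)-x=\lambda g^2$ with $\lambda\in k^\times$ and $g\in k(C_2)^\times$ regular outside infinity, so $g=\gamma+\beta\frac{x}{f_4}+\alpha\frac{x^2}{f_2f_4^2}$ by Lemma~\ref{RRlem}. Since $g^2=(x_0(t)-x)/\lambda$ has a pole of order exactly $d/3$ at at least two of $\infty_1,\infty_2,\infty_3$ (by the computation above), $g$ has a pole of order $d/6$ there; this already contradicts the integrality of the valuations of $g$ at infinity when $d\equiv 3\pmod 6$, while when $6\mid d$ we get $\ord_\infty^\star(g)_\infty=d/6$, so Lemma~\ref{RRlem} forces $\deg_t\alpha+\tfrac{2d_1+d_2+2d_4}{3}\le d/6$, i.e. $3\deg_t\alpha\le-3d_1<0$ and $\alpha=0$. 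Expanding $g^2$ with the relation $x^3=-f$ and matching the coefficient of $x^2$ against that of $x_0(t)-x$ (which is $0$) gives $\beta^2f_2=0$, hence $\beta=0$; then $g=\gamma\in k(t)$ and $x=x_0(t)-\lambda\gamma^2\in k(t)$, contradicting $[k(C_2):k(t)]=3$ (valid because $f$ is not a cube in $\kbar[t]$). Thus $0$ is not in the image of $\mu$.

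Putting these together, and noting that $E(k(t))[2]=0$ (its nontrivial $2$-torsion points would have $x$-coordinate a cube root of $-f$, but $f$ is not a cube), we have $\#\bigl(E(k(t))/2E(k(t))\bigr)=2^{\rk_\Z E(k(t))}$, so the image of $\mu$ has at most $2^{\rk_\Z E(k(t))}-1$ elements and
\[
\#E(k[t])_{\le d/6}\le 2^{\omega_2+\omega_4+1}\bigl(2^{\rk_\Z E(k(t))}-1\bigr)\le 2^{\rk_\Z E(k(t))+\omega_2+\omega_4+1}-2.
\]
The two explicit bounds (for $d\equiv 0$ and $d\equiv 3\pmod 6$) then follow by inserting the geometric rank bound for $E$ coming from Corollary~\ref{cor:2descent} together with the genus formula \eqref{eq:genusofC2}. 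I expect the main obstacle to be precisely the loss of strictness observed above: recovering $a=0$ in the equality case $\deg_t a+\deg_t c=d_2+2d_4$ is exactly what forces the detour through the $\gcd$-stratification and the $\pi$-divisibility argument of Lemma~\ref{lem:irreducible_pi}, and this is also why the fibers of $\mu$ can only be bounded by $2^{\omega_2+\omega_4+1}$ rather than by $2$.
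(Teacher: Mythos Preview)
Your proposal is correct and follows essentially the same route as the paper: compute $\ord_\infty^\star(\psi)_\infty$ two ways via Lemma~\ref{RRlem}, extract the three degree inequalities, and then use Lemma~\ref{lem:irreducible_pi} under the $\gcd$-stratification to force $a=0$.

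A couple of minor differences are worth noting. The paper splits into two subcases: if $3\deg_t c<d$ it falls back on the argument of Theorem~\ref{thm:main}\,\ref{item:mainthsmallht} (so $a=0$ without the $\gcd$ assumption), and only in the boundary case $3\deg_t c=d$ does it eliminate $d_1$ to get $\deg_t a\le d_2+2d_4-\deg_t c<d_2+2d_4$ and invoke Lemma~\ref{lem:irreducible_pi}. You instead obtain $\deg_t a<d_2+2d_4$ uniformly from $d_1\ge 1$, which is a clean shortcut and leads to the same fiber bound. You also supply an explicit argument that the map omits~$0$, which the paper asserts but does not spell out; your argument is correct (the inequality should read $\deg_t\alpha\le -d_1/2<0$, rather than $3\deg_t\alpha\le -3d_1$, but the conclusion $\alpha=0$ stands). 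Finally, your pigeonhole remark---that the two size-$\ge 2$ subsets of $\{\infty_1,\infty_2,\infty_3\}$ on which $x_i(t)-x$ has a pole of exact order $d/3$ must overlap---justifies $\ord_\infty^\star(\psi)_\infty=d/3$ in the split case, a point the paper applies Lemma~\ref{RRlem} to without further comment.
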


\begin{proof}
  We begin as in the proof of Theorem~\ref{thm:main}. By considering the two expressions of the function~$\psi\in k(C_2)$ in~\eqref{eq:start}, 
Lemma~\ref{RRlem} yields
\begin{equation}
\ord_\infty^\star(\psi)_\infty
=
\frac{d}{3}
=
\max\left\{
\deg_t c, \deg_t b + \frac{d}{3}, \deg_t a + \frac{2d_1+d_2+2d_4}{3}
\right\}.
\end{equation}
Since~$d = d_1+2d_2+4d_4$, we deduce that
\begin{subequations}
  \begin{align}
    3\deg_t c&\leq d \label{c0bis}\\
    \deg_t b &\leq 0 \label{c1bis}\\
    3\deg_t a + d_1 &\leq d_2 + 2d_4. \label{c2bis}
  \end{align}
\end{subequations}

Unlike the case when~$3\nmid d$, the first inequality need not be strict. Nevertheless, either this inequality is
strict, in which case we can conclude as in the proof of Theorem~\ref{thm:main},~\ref{item:mainthsmallht}, or it
is an equality, i.e.~$3\deg_t c = d = d_1 + 2d_2 + 4d_4$. Then, using this to eliminate~$d_1$ in~\eqref{c2bis}
yields~$\deg_t a \leq d_2+2d_4 - \deg_t c < d_2+2d_4$ since~$\deg_t c = d/3 > 0$.

According to Lemma~\ref{lem:irreducible_pi}, there exists an irreducible~$\pi\in k[t]$ which divides~$x_0,x_1,f_2f_4,c$ but not~$a$.
Thanks to~\eqref{c1},~$\pi \mid a^2f_1$; this is impossible since~$\gcd(f_1,f_2f_4)=1$ and~$\pi\nmid a$. Therefore~$a=0$.
\end{proof}


\subsection{Tools from $3$-descent}
\label{subsect:3descent}


The aim is to prove the analogue of Theorem~\ref{thm:main} but in the context of $3$-descent.

\begin{thm}\label{thm:main-3}
Assume hypotheses~\ref{field_k} -- \ref{f1nonct} above. Assume in addition that $f=f_1f_3^3f_5^5$ and that $2\nmid d$. Let $\omega_3$ be the number
of irreducible factors of $f_3$. Let us consider the natural map
\begin{align*}
E(k[t])\to E(k(t))/\lambda^t E'(k(t)).
\end{align*}
\begin{enumerate}
\item\label{item:mainthsmallht-3} When restricted to~$E(k[t])_{< d/4}$, this map is at most $2^{\omega_3}$-to-$1$ (resp. $(3\cdot 2^{\omega_3})$-to-$1$) onto its image
if $\zeta_3\notin k$ (resp. if $\zeta_3\in k$), and omits $0$. In particular,
$$
\#E(k[t])_{< d/4} \leq
 \begin{cases}
 2^{\omega_3}\left(3^{\rk_{\Z} E(k(t))}-1\right) & \text{if $\zeta_3\notin k$} \\
 2^{\omega_3}\left(3^{\frac{1}{2}\rk_{\Z} E(k(t))+1}-3\right) & \text{if $\zeta_3\in k$.}
 \end{cases}
$$
\item\label{item:mainthlargeht-3} When restricted to~$E(k[t])_{d/4\leq \cdot < d/2}$ or $E(k[t])_{\geq d/2}$, it is at most $2^{\omega_3}$-to-$1$ (resp. $(3\cdot 2^{\omega_3})$-to-$1$) onto its image if $\zeta_3\notin k$ (resp. if $\zeta_3\in k$).
In particular,
$$
\# E(k[t])_{\geq d/4} \leq
 \begin{cases}
 2^{\omega_3+1}\cdot 3^{\rk_{\Z} E(k(t))} & \text{if $\zeta_3\notin k$} \\
 2^{\omega_3+1}\cdot 3^{\frac{1}{2}\rk_{\Z} E(k(t))+1} & \text{if $\zeta_3\in k$.}
 \end{cases}
$$
\item\label{item:mainth-3} The set~$E(k[t])$ of all integral points on~$E$ satisfies
\begin{align*}
\#E(k[t])&\leq
 \begin{cases}
 2^{\omega_3}\cdot \left(3^{\rk_{\Z} E(k(t))+1}-1\right) & \text{if $\zeta_3\notin k$} \\
 2^{\omega_3}\cdot \left(3^{\frac{1}{2}\rk_{\Z} E(k(t))+2} -3\right) & \text{if $\zeta_3\in k$.}
 \end{cases}
\end{align*}
\end{enumerate}
\end{thm}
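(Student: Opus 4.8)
The plan is to run the $3$-descent analogue of the proof of Theorem~\ref{thm:main}, with $C_2$ and $\phi_2$ replaced by $C_3$ and $\phi_3$. By Proposition~\ref{prop:3descent} the natural map $E(k[t])\to E(k(t))/\lambda^t E'(k(t))$ is $\phi_3$ followed by the inclusion of its image in $\Pic(C_3,\Q.D_3)[3]$; since $d$ is odd and $f=f_1f_3^3f_5^5$ (so $f_2,f_4$ are constant and $d\not\equiv 2,4\pmod 6$), the divisor $D_3$ is empty and the target is just $\Pic(C_3)[3]$. Hence, on any subset $S$ of $E(k[t])$, one has $\#S\le(\text{maximal fiber of the map on }S)\cdot\#\bigl(E(k(t))/\lambda^t E'(k(t))\bigr)$. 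For the second factor: when $\zeta_3\notin k$, Proposition~\ref{prop:3descent}(1) presents $E(k(t))/\lambda^t E'(k(t))$ as a quotient of $E(k(t))/3E(k(t))\cong(\Z/3)^{\rk_\Z E(k(t))}$, so it has at most $3^{\rk_\Z E(k(t))}$ elements; when $\zeta_3\in k$, then $E'\cong E$ and $\lambda,\lambda^t$ are the two conjugate complex-multiplication endomorphisms of norm $3$, so $E(k(t))$ is a module over the PID $\Z[\zeta_3]$ and $E(k(t))/\lambda^t E'(k(t))\cong E(k(t))\otimes_{\Z[\zeta_3]}\F_3$ has $3^{\frac12\rk_\Z E(k(t))}$ elements (the prime-to-$3$ torsion contributing nothing, as $E(k(t))$ has no $3$-torsion). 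Everything thus reduces to bounding the fibers of the natural map on each of $E(k[t])_{<d/4}$, $E(k[t])_{d/4\le\cdot<d/2}$ and $E(k[t])_{\ge d/2}$ — a splitting dictated, as in the $2$-descent case, by Davenport's inequality (Corollary~\ref{Dcor}, giving $h(P)\le d-1$) together with the comparison $h(P_0)<2h(P_1)$ the fiber argument will use.

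First I would establish the $3$-descent analogues of Lemmas~\ref{RRlem} and~\ref{lem:start}. With $C_3\colon y^2=f_1f_3^3f_5^5$ and $u:=y/(f_3f_5^2)$ (so $u^2=f_1f_3f_5$), the integral closure of $k[t]$ in $k(C_3)$ is $B=k[t]\oplus k[t]\,u$ (checked by comparing discriminants, as in Lemma~\ref{RRlem}), and — since $d$ is odd, so $t=\infty$ is a single ramified point of $C_3$ — for $\psi=\gamma+\beta u\in B$ a Newton-polygon computation gives $\ord_\infty^\star(\psi)_\infty=\max\{2\deg_t\gamma,\,2\deg_t\beta+d_1+d_3+d_5\}$. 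If $P_0=(x_0,y_0)$ and $P_1=(x_1,y_1)$ in $E(k[t])$ have the same image, injectivity of $\phi_3$ gives $\divisor(y_0-y)-\divisor(y_1-y)=3\divisor(\phi)$ for some $\phi\in k(C_3)$, hence $(y_0-y)(y_1-y)^2=c\psi^3$ with $\psi\in k(C_3)$, $c\in k^\times$; applying $N_{k(C_3)/k(t)}$, which sends $y_i-y$ to $y_i^2-y^2=x_i^3$, shows $c^2$, hence $c$, is a cube in $k^\times$, so after rescaling $(y_0-y)(y_1-y)^2=\psi^3$ with $\psi=\gamma+\beta u\in B$. Matching the coefficients of $1$ and of $u$ (using $y=uf_3f_5^2$) yields
\[
\gamma^3+3\gamma\beta^2f_1f_3f_5=y_0y_1^2+y_0f+2y_1f,\qquad \beta\bigl(3\gamma^2+\beta^2f_1f_3f_5\bigr)=-f_3f_5^2\bigl(2y_0y_1+y_1^2+f\bigr),
\]
plus the same relations with $P_0,P_1$ interchanged. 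If $\beta=0$ these force $y_0^2=y_1^2$ and $x_0^3=x_1^3$; the sign $y_0=-y_1$ would give $\phi_3(P_0)=-\phi_3(P_1)$, hence $\phi_3(P_1)=0$ (excluded below), so $y_0=y_1$ and $P_0=[\zeta_3^j]P_1$ — just $\{P_1\}$ if $\zeta_3\notin k$, and the orbit under the CM automorphism $[\zeta_3]$ if $\zeta_3\in k$ (an orbit $\phi_3$ does not see, since $\zeta_3\equiv1\bmod\sqrt{-3}$ acts trivially on $E(k(t))/\lambda^t E'(k(t))$).

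It then remains to force $\beta=0$ in each height range. Since the $P_i$ are integral and the $f_j$ separable and coprime, $f_1,f_5\nmid x_i$ and $f_3\nmid x_i^2$, so $\gcd(x_i,f)=\gcd(x_i,f_3)$ is a squarefree divisor of $f_3$; I would partition $E(k[t])$ into the at most $2^{\omega_3}$ classes on which this gcd is constant, so that $\gcd(x_0,f_3)=\gcd(x_1,f_3)$ holds within a class. Inside each class $\times$ height range I would equate the formula above for $\ord_\infty^\star(\psi)_\infty$ with $3\ord_\infty^\star(\psi)_\infty=\ord_\infty^\star(y_0-y)_\infty+2\ord_\infty^\star(y_1-y)_\infty$, where $\ord_\infty^\star(y_i-y)_\infty=\max\{2\deg_t y_i,d\}$ is $d$ when $h(P_i)\le d/6$ and $6h(P_i)$ otherwise, to obtain degree inequalities on $\gamma$ and $\beta$; combined with the displayed divisibility relations and coprimality of the $f_j$ (as in Lemmas~\ref{lem:irreducible_pi} and~\ref{tinj2}), these force $\beta=0$. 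The threshold $d/4$ is what makes this work: on $E(k[t])_{<d/4}$ the pole orders are bounded absolutely, giving a direct argument (in the spirit of the proof of Theorem~\ref{thm:main}~\ref{item:mainthsmallht}) which also shows $0$ is not in the image, as $\beta=0$ in a hypothetical $y_0-y=\psi^3$ would make $f=y^2$ a square in $k[t]$, against~\ref{f1nonct}; the same inequalities also preclude $\phi_3(P)=0$ for $P\in E(k[t])_{\ge d/4}$, so the sign $y_0=-y_1$ above cannot occur there either; and on each of $E(k[t])_{d/4\le\cdot<d/2}$ and $E(k[t])_{\ge d/2}$ one has $h(P_1)\le h(P_0)<2h(P_1)$, the comparison needed for the Lemma~\ref{tinj2}-type argument. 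With $\beta=0$ and the previous paragraph, the natural map is at most $2^{\omega_3}$-to-$1$ (resp.\ $3\cdot2^{\omega_3}$-to-$1$) on each of the three ranges if $\zeta_3\notin k$ (resp.\ $\zeta_3\in k$); items~\ref{item:mainthsmallht-3} and~\ref{item:mainthlargeht-3} follow on multiplying by the bound on $\#\bigl(E(k(t))/\lambda^t E'(k(t))\bigr)$ (using that $0$ is omitted on $E(k[t])_{<d/4}$), and item~\ref{item:mainth-3} is their sum.

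The main obstacle I expect is precisely this last step: carrying out the Newton-polygon/pole-order accounting on $C_3$ and, above all, running the divisibility arguments so as to force $\beta=0$ uniformly over the three ranges with the stated constants. The asymmetry of $(y_0-y)(y_1-y)^2$ — in contrast with the symmetric $(x_0-x)(x_1-x)$ of $2$-descent — is what complicates the comparison of pole orders and pins the small/large threshold at $d/4$ rather than $d/6$; and obtaining the correct multiplicity in the $\zeta_3\in k$ case relies on $\phi_3$ factoring through $E(k(t))\otimes_{\Z[\zeta_3]}\F_3$ and being invariant under $[\zeta_3]$ but not under $[-1]$.
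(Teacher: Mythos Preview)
Your global strategy is exactly the paper's: partition $E(k[t])$ by the $2^{\omega_3}$ values of $\gcd(y_i,f_3)$ (equivalently $\gcd(x_i,f_3)$), split the heights into the three ranges $[0,d/4)$, $[d/4,d/2)$, $[d/2,d-1]$, and on each piece compare two computations of $\ord_\infty^\star(\psi)_\infty$ to force the $y$-coefficient of $\psi$ to vanish. Your treatment of the target size $\#\bigl(E(k(t))/\lambda^t E'(k(t))\bigr)$ in the two cases $\zeta_3\notin k$ and $\zeta_3\in k$ is also fine.

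The genuine gap is in your Key Lemma. You take $\psi^3=(y_0-y)(y_1-y)^2$ and aim to show that the $u$-coefficient $\beta$ of $\psi=\gamma+\beta u$ vanishes, claiming this forces $y_0^2=y_1^2$. But test this against the trivial case $P_0=P_1$: then $\psi=\zeta(y_0-y)=\zeta y_0-\zeta f_3f_5^2\,u$, so $\beta=-\zeta f_3f_5^2\neq 0$. Thus ``$\beta=0$'' is \emph{not} the condition that detects $P_0\in\{[\zeta_3^j]P_1\}$, and your whole pole-order argument is aimed at the wrong target. (One can also check directly that $\beta=0$ in your relations gives $2y_0y_1+y_1^2+f=0$, i.e.\ $(y_0+y_1)^2=x_0^3$, which is not $y_0=\pm y_1$.)

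The paper fixes this with one extra idea: since $(y_1-y)(y_1+y)=y_1^2-f=x_1^3$, one multiplies $\frac{y_0-y}{y_1-y}=c\phi^3$ by $x_1^3$ rather than by $(y_1-y)^3$, obtaining
\[
(y_0-y)(y_1+y)=\psi^3,\qquad \psi=b+a\,\frac{y}{f_3}
\]
(after extracting $f_5^2$ from the $u$-coefficient, which the relations force). Now the coefficients of $1$ and $y$ on the left are simply $y_0y_1-f$ and $y_0-y_1$, so matching gives
\[
a^3f_1f_5^5+3\,\frac{ab^2}{f_3}=y_0-y_1,\qquad b^3+3a^2bf_1f_3f_5^5=y_0y_1-f,
\]
and $a=0$ \emph{immediately} yields $y_0=y_1$ (hence $x_0^3=x_1^3$), with no appeal to the swapped ordering and no spurious sign $y_0=-y_1$. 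The divisibility step is also cleaner: from the first relation $f_3\mid ab^2$, and under $\gcd(y_0,f_3)=\gcd(y_1,f_3)$ one gets $f_3\mid a$; then the pole-order bound $\deg a<d_3$ (small heights) or the analogous inequality via $h(P_0)<2h(P_1)$ (large heights) forces $a=0$. Replace your $(y_0-y)(y_1-y)^2$ by $(y_0-y)(y_1+y)$ and your outline goes through as written.
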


The following Lemma is analogous to (and even easier than) Lemma~\ref{RRlem} and can be proved similarly.

\begin{lem}\label{RRlem-3}
Assume that $f=f_1f_3^3f_5^5$. Then the ring of rational functions on $C_3$ which are regular outside infinity can be explicitly described as
$$
k[t] \oplus k[t]\cdot \frac{y}{f_3f_5^2}.
$$
Moreover, if~$\psi=\beta +\alpha\frac{y}{f_3f_5^2}$
belongs to this ring, then the divisor of its poles satisfies
\begin{align*}
\ord_\infty^\star (\psi)_\infty
=
\begin{cases}
\max\{2\deg_t \beta, 2\deg_t \alpha + d_1+d_3+d_5 \}
&\text{if $2\nmid d$},\\
\max\left\{\deg_t \beta, \deg_t \alpha + \frac{d_1+d_3+d_5}{2}\right\}
&\text{if $2\mid d$,}
\end{cases}
\end{align*}
(note that the rationals appearing in this formula are all integers).
\end{lem}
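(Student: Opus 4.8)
The plan is to follow the proof of Lemma~\ref{RRlem} step by step, simplifying throughout since we now work with a degree-$2$ radical extension rather than a degree-$3$ one. Set $\eta := y/(f_3 f_5^2)$; from the Weierstrass equation $y^2 = f = f_1 f_3^3 f_5^5$ we obtain $\eta^2 = f_1 f_3 f_5 =: g$, which is separable (the $f_i$ being pairwise coprime and separable) and nonconstant (as $f_1$ is, by~\ref{f1nonct}), of degree $d_1+d_3+d_5$. Thus $k(C_3) = k(t,\eta)$ and $C_3$ is the smooth projective model of $\eta^2 = g(t)$, as in the Remark following Proposition~\ref{prop:3descent}.

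First I would identify the ring $B$ of rational functions on $C_3$ regular outside infinity with the integral closure of $k[t]$ in $k(C_3)$. Since $\eta$ satisfies the monic equation $X^2 - g = 0$, we have $k[t][\eta] = k[t]\oplus k[t]\cdot\eta \subseteq B$; and because $\dis(X^2 - g) = 4g$ is separable ($\mathrm{char}\,k\neq 2$), the order $k[t][\eta]$ is already maximal, so $B = k[t][\eta] = k[t]\oplus k[t]\cdot\frac{y}{f_3 f_5^2}$. (Equivalently, the affine curve $\eta^2 = g$ is smooth because $g$ is separable, hence its coordinate ring is normal.)

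Next, for $\psi = \beta + \alpha\eta\in B$ with $\alpha,\beta\in k[t]$ I would compute $\ord_\infty^\star(\psi)_\infty$ via the Newton polygon of the characteristic polynomial of $\psi$, exactly as for $C_2$. The nontrivial conjugate of $\psi$ is $\bar\psi = \beta - \alpha\eta$, so this polynomial is
\[
\chi(X) = X^2 - 2\beta X + (\beta^2 - \alpha^2 g) \in k[t][X].
\]
Let $v = -\deg_t$ be the valuation at infinity of $k(t)$. It ramifies in $k(C_3)$ exactly when $\deg g = d_1+d_3+d_5$ is odd, i.e. (since $\deg g\equiv d\pmod 2$) exactly when $2\nmid d$, and is unramified otherwise. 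As in Lemma~\ref{RRlem}, this gives $\ord_\infty^\star(\psi)_\infty = -2w(\psi)$ when $2\nmid d$ (with $w$ the unique extension of $v$) and $\ord_\infty^\star(\psi)_\infty = \max_i\{-w_i(\psi)\}$ when $2\mid d$ (with $w$, or $w_1,w_2$, the extensions); in every case this is the largest slope of the Newton polygon of $\chi$. Setting $\delta_0 := v(\beta)$ and $\delta_1 := v(\alpha) - \tfrac{1}{2}\deg g$, the relevant points of the polygon (exponent against valuation of the coefficient) are $(2,0)$, $(1,\delta_0)$ and $(0, v(\beta^2 - \alpha^2 g))$, with $v(\beta^2 - \alpha^2 g)\geq 2\min\{\delta_0,\delta_1\}$; a short convexity check --- according to whether the middle point lies strictly below or on/above the segment joining the outer two --- shows the rightmost slope equals $\max\{-\delta_0,-\delta_1\}$. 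Unwinding the definitions of $\delta_0,\delta_1$ then yields $\max\{2\deg_t\beta,\, 2\deg_t\alpha + d_1+d_3+d_5\}$ when $2\nmid d$ and $\max\{\deg_t\beta,\, \deg_t\alpha + \tfrac{1}{2}(d_1+d_3+d_5)\}$ when $2\mid d$, which is the claim; the degenerate cases $\alpha=0$ or $\beta=0$ are immediate using $\deg_t 0 = -\infty$.

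Since each step is a quadratic specialization of the $C_2$ argument, there is no genuine obstacle. The only points requiring care are the parity bookkeeping for the ramification of $v$ in $k(C_3)$ (governed by $\deg g = d_1+d_3+d_5$, hence by $d\bmod 2$) and the resulting normalization factor in $\ord_\infty^\star$, together with the elementary convexity argument pinning down the rightmost Newton slope. I would present this tersely, referring to the proof of Lemma~\ref{RRlem} for whatever transfers verbatim.
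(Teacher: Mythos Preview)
Your proposal is correct and follows precisely the approach the paper indicates: the paper's ``proof'' of Lemma~\ref{RRlem-3} consists of the single sentence that it is analogous to (and easier than) Lemma~\ref{RRlem}, and your argument is exactly the quadratic specialization of that proof---integral closure via the discriminant of the order $k[t][\eta]$, then the Newton-polygon computation for $\chi(X)=X^2-2\beta X+(\beta^2-\alpha^2 g)$. Your bookkeeping on the parity of $\deg g=d_1+d_3+d_5\equiv d\pmod 2$ and the resulting normalization of $\ord_\infty^\star$ is correct, and the convexity check for the rightmost slope is fine.
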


\begin{lem}[Key Lemma for the $3$-descent]\label{lem:start-3}
Assume that $f=f_1f_3^3f_5^5$. Consider the composition
$$
E(k[t])\to E(k(t))/\lambda^t E'(k(t))\hookrightarrow\Pic(C_3,\mathbb{Q}.D_3)[3].
$$
If two points $P_0=(x_0(t),y_0(t))$ and $P_1=(x_1(t),y_1(t))$ in $E(k[t])$ have the same image by this map, then there exists a rational function
$\psi\in k(C_3)$, regular outside infinity, such that
\begin{equation}
\label{eq:psidef3}
(y_0(t)-y)(y_1(t)+y)=\psi^3.
\end{equation}
Moreover:
\begin{enumerate}
\item\label{item:3descente_f5_exclude} there exist polynomials $a$ and~$b$ in $k[t]$, with~$b\neq 0$, such that
$$
\psi =b+a\frac{y}{f_3}
$$
and we have the relations
\begin{subequations}
\begin{align}
& a^3f_1f_5^5 + 3\frac{ab^2}{f_3} = y_0 - y_1 \label{c1-3} \\
& b^3 + 3a^2b f_1f_3f_5^5= y_0y_1 - f_1f_3^3f_5^5 \label{c0-3}
\end{align}
\end{subequations}
\item\label{item:3descente_if_gcd} if~$\gcd(y_0,f_3) = \gcd(y_1,f_3)$ then~$f_3 \mid a$;
\item\label{item:ifaeq03} if $a=0$ then $(x_1(t),y_1(t)) = (\zeta x_0(t),y_0(t))$ for some~$\zeta\in k$ satisfying~$\zeta^3=1$.
\end{enumerate}
\end{lem}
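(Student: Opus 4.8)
The plan is to transpose the proof of Lemma~\ref{lem:start} from the quadratic descent by $x$-coordinates on $C_2$ to the cubic descent by $y$-coordinates on $C_3$. I would begin by unwinding the hypothesis that $P_0$ and $P_1$ have the same image, i.e.\ $[\frac13\divisor(y_0-y)]=[\frac13\divisor(y_1-y)]$ in $\Pic(C_3,\mathbb{Q}.D_3)[3]$. On $C_3$ one has $(y_1-y)(y_1+y)=y_1^2-f=x_1^3$ because $P_1$ lies on $E$, so $[\frac13\divisor(y_1+y)]=-[\frac13\divisor(y_1-y)]$, whence $\frac13\divisor\bigl((y_0-y)(y_1+y)\bigr)$ is a principal divisor $\divisor(\phi)$, and $(y_0-y)(y_1+y)=c\phi^3$ for some $c\in k^\times$ (a rational function with trivial divisor on the geometrically integral curve $C_3$ is a constant). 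I would then apply the norm $N=N_{k(C_3)/k(t)}$ and use $N(y_i-y)=(y_i-y)(y_i+y)=x_i^3$ to get $c^2=\bigl(x_0x_1/N(\phi)\bigr)^3$; thus $c^2$, and hence $c$, is a cube in $k^\times$, so after absorbing a cube root of $c$ into $\phi$ I obtain $(y_0-y)(y_1+y)=\psi^3$. Since $y_0,y_1\in k[t]$ and $y$ is integral over $k[t]$, the left-hand side is regular outside infinity, hence so is $\psi$.

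Next I would run $\psi$ through Lemma~\ref{RRlem-3}: write $\psi=\beta+\alpha\frac{y}{f_3f_5^2}$ with $\alpha,\beta\in k[t]$, set $u=\frac{y}{f_3f_5^2}$ so that $u^2=f_1f_3f_5$, expand $\psi^3=(\beta^3+3\beta\alpha^2f_1f_3f_5)+(3\beta^2\alpha+\alpha^3f_1f_3f_5)u$, and compare with $(y_0-y)(y_1+y)=(y_0y_1-f)+(y_0-y_1)f_3f_5^2\,u$. This gives $\beta(\beta^2+3\alpha^2f_1f_3f_5)=y_0y_1-f$ and $\alpha(3\beta^2+\alpha^2f_1f_3f_5)=(y_0-y_1)f_3f_5^2$. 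The crucial point is to show $f_5^2\mid\alpha$, so that $a:=\alpha/f_5^2$ and $b:=\beta$ are polynomials turning these two relations into \eqref{c1-3} and \eqref{c0-3}. For this I would fix a root $t_0$ of $f_5$, note $v_{t_0}(f)=5$, and observe that $y_i^2=x_i^3+f$ together with $5$ being coprime to both $2$ and $3$ forces $v_{t_0}(x_i)=v_{t_0}(y_i)=0$, hence $v_{t_0}(x_0x_1)=0$; then $N(\psi)=\beta^2-\alpha^2f_1f_3f_5$ with $N(\psi)^3=x_0^3x_1^3$ gives $\beta^2-\alpha^2f_1f_3f_5=\zeta x_0x_1$ for a cube root of unity $\zeta\in k$, which forces $v_{t_0}(\beta)=0$ (as $v_{t_0}(\alpha^2f_1f_3f_5)\geq1$), and finally the relation $\alpha(3\beta^2+\alpha^2f_1f_3f_5)=(y_0-y_1)f_3f_5^2$ forces $v_{t_0}(\alpha)\geq2$. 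Since $f_5$ is separable this gives $f_5^2\mid\alpha$.

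It then remains to verify $b\neq0$ and items (ii), (iii). For $b\neq0$ I would argue as in Lemma~\ref{lem:start}: if $b=0$, then \eqref{c0-3} reads $y_0y_1=f=f_1f_3^3f_5^5$, so some irreducible factor $\pi$ of the nonconstant separable polynomial $f_1$ divides exactly one of $y_0,y_1$, say $y_0$, to the first power, and then $v_\pi(x_0^3)=v_\pi(y_0^2-f)=1$ is absurd. For (ii), assuming $\gcd(y_0,f_3)=\gcd(y_1,f_3)$ and that some irreducible $\pi\mid f_3$ does not divide $a$: from \eqref{c1-3}, $f_3\mid ab^2$ so $\pi\mid b$, whence \eqref{c1-3} gives $v_\pi(y_0-y_1)=0$ and the gcd hypothesis then forces $\pi\nmid y_0y_1$; but in \eqref{c0-3} the left side has $\pi$-valuation $\geq2$ and the right side has $\pi$-valuation $0$, a contradiction, so $f_3\mid a$. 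For (iii), if $a=0$ then \eqref{c1-3} gives $y_1=y_0$ and \eqref{c0-3} gives $b^3=y_0^2-f=x_0^3$; since $b\neq0$ we have $x_0\neq0$, and combining $y_1=y_0$ with $y_1^2=x_1^3+f$ yields $x_1^3=x_0^3$, so $x_1=\zeta x_0$ for some $\zeta\in k$ with $\zeta^3=1$.

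I expect the main obstacle to be the extraction $f_5^2\mid\alpha$: unlike the single factor $f_4$ removed in Lemma~\ref{lem:start}, pulling a square out of the denominator genuinely requires combining the norm identity with the arithmetic of the exponent (that $v_{t_0}(f)=5$ is coprime to both $2$ and $3$), and getting the local valuation bookkeeping right at the primes dividing $f_5$ — and, for item (ii), at the primes dividing $f_3$, where the gcd hypothesis must be invoked to control $y_0y_1$ — is the part that needs care.
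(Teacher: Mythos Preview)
Your proposal is correct and follows essentially the same route as the paper: unwind the equality of classes to produce $\psi$, expand in the $k[t]$-basis of Lemma~\ref{RRlem-3}, compare coefficients, extract $f_5^2$ from $\alpha$ by local valuation arguments, and then handle $b\neq 0$, item~(ii), and item~(iii) by elementary divisibility. The only notable differences are cosmetic: you justify $\psi\in k(C_3)$ (rather than $\kbar(C_3)$) via the norm identity $N(\psi)^3=(x_0x_1)^3$, which the paper glosses over; and you use that same norm identity $\beta^2-\alpha^2 f_1f_3f_5=\zeta x_0x_1$ to get $v_{t_0}(\beta)=0$ in one stroke (the paper instead reads it off \eqref{c0-3} after first showing $\gcd(f_5,y_i)=1$, then bootstraps $f_5\mid\alpha$ to $f_5^2\mid\alpha$ via two passes through \eqref{c1-3}). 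Your contradiction argument for item~(ii) is a harmless reorganization of the paper's direct case split.
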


\begin{proof}
The above composition associates to $(x_0(t),y_0(t))$ the divisor class of the divisor $D_0=\frac{1}{3}\divisor(y_0(t)-y)$
in~$\Pic(C_3,\mathbb{Q}.D_3)[3]$, where we view $y_0(t)-y$ as a rational function in $k(C_3)$. 
Obviously, if~$\zeta_3\in k$, the three distinct points~$(x_0(t),y_0(t))$,~$(\zeta_3 x_0(t),y_0(t))$ and~$(\zeta_3^2 x_0(t),y_0(t))$ map to the same
element of $\Pic(C_3,\mathbb{Q}.D_3)[3]$.  Let $(x_1(t),y_1(t))\in  E(k[t])$ be another point and let $D_1=\frac{1}{3}\divisor(y_1(t)-y)$. 
Suppose that $[D_0]=[D_1]$ in $\Pic(C_3,\mathbb{Q}.D_3)[3]$, or equivalently, $D_0-D_1$ is a principal divisor.  Let $\divisor(\phi)=D_0-D_1$, where $\phi\in k(C_3)$.  Then $\divisor(\phi^3)=3D_0-3D_1=\divisor\left(\frac{y_0(t)-y}{y_1(t)-y}\right)$.  Since $(y_1(t)-y)(y_1(t)+y)=y_1(t)^2-f(t)=x_1(t)^3$, it follows that
$$
(y_0(t)-y)(y_1(t)+y)=\psi^3
$$
for some $\psi\in \kbar(C_3)$.  Since the function on the left-hand side is regular outside infinity, one deduces that $\psi$ is regular outside infinity.

Proof of~\ref{item:3descente_f5_exclude}. According to Lemma~\ref{RRlem-3}, there exist polynomials $\alpha$, $\beta$ such
that~$\psi = \beta+\alpha \frac{y}{f_3f_5^2}$. By identifying the coefficients of~$y$ and~$1$ in the identity \eqref{eq:psidef3}
we obtain:
\begin{align}
& \frac{\alpha^3f_1}{f_5} + 3\frac{\alpha\beta^2}{f_3f_5^2} = y_0 - y_1 \label{local-c1-3} \\
& \beta^3 + 3\alpha^2\beta f_1f_3f_5= y_0y_1 - f_1f_3^3f_5^5 \label{local-c0-3}.
\end{align}
Let us prove that~$f_5^2$ divides~$\alpha$ in several steps. First we show that~$\gcd(f_5,x_i) = \gcd(f_5,y_i) = 1$.
Let~$v$ denotes the valuation associated to an irreducible divisor of~$f_5$. Since~$y_i^2 = x_i^3 + f_1f_3^3f_5^5$, then~$v(y_i) = 1$
(resp~$v(y_i) = 2$) implies~$3v(x_i) = 2$ (resp.~$3v(x_i) = 4$) and~$v(y_i) \geq 3$ implies~$3v(x_i) = 5$; all these lead to contradictions and
thus~$v(y_i) = v(x_i) = 0$. We deduce that~$\gcd(f_5,\beta)=1$ since otherwise, by~\eqref{local-c0-3},
we would have~$\gcd(f_5,y_i) \not= 1$ for at least one~$i$. According to~\eqref{local-c1-3},~$f_5$ divides~$\alpha\beta^2$ and hence~$f_5$
divides~$\alpha$. Going back to~\eqref{local-c1-3}, since~$\gcd(f_5,\beta)= 1$, we deduce that~$f_5^2 \mid\alpha\beta^2$
and thus~$f_5^2 \mid \alpha$. Letting~$b = \beta$ and~$a = \alpha/f_5^2$, item~\ref{item:3descente_f5_exclude} is proved except that~$b\not= 0$.
Assume by contradiction that~$b=0$; then by~\eqref{c0-3}, $f_1 \mid y_0y_1$. But~$f_1$ being nonconstant, up to exchanging~$y_0$ and~$y_1$, there
exists~$\pi$ an irreducible factor of~$f_1$ that divides~$y_0$. This is incompatible with the relation~$y_0^2 - f_1f_3^3f_5^5 = x_0^3$, since
the left-hand term is of $\pi$-valuation equal to~$1$, while the right-hand term must have a $\pi$-valuation divisible by~$3$.

Proof of~\ref{item:3descente_if_gcd}. Let~$\pi$ be an irreducible divisor of~$f_3$. By~\eqref{c1-3}, we know that~$\pi \mid ab^2$ and
thus~$\pi\mid a$ or~$\pi\mid b$. If the latter occurs, then by~\eqref{c0-3}, $\pi \mid y_0y_1$, and hence~$\pi \mid y_0$ and~$\pi \mid y_1$ since~$\gcd(y_0,f_3) = \gcd(y_1,f_3)$. Then~\eqref{c1-3} shows that~$\pi\mid a^3f_1f_5^5$ which implies that~$\pi \mid a$
since~$\gcd(f_3,f_1f_5) = 1$ and~$\pi\mid f_3$. In conclusion, in any case,~$\pi\mid a$ and necessarily~$f_3\mid a$.

Proof of~\ref{item:ifaeq03}. A direct consequence of~\eqref{c1-3}.
\end{proof}

\begin{proof}[Proof of Theorem~\ref{thm:main-3}] \emph{Common starting point}.
In any case, we start with~$P_i = (x_i(t),y_i(t)) \in E(k[t])$, $i=0,1$, having the same image
in~$E(k(t)/\lambda^t E'(k(t))$ by the natural map. Then by Lemma~\ref{lem:start-3}, there exist~$\psi\in k(C_3)$ and~$a,b\in k[t]$
such that
\begin{align}\label{eq_recap_psi_formlas}
&(y_0-y)(y_1+y) = \psi^3
&
&\text{and}
&
&\psi = b + a\frac{y}{f_3}.
\end{align}
We use these two formulas to compute~$\ord_\infty^\star (\psi)_\infty$, but we need to distinguish the cases of \emph{small or large height}. In both cases, assuming that $\gcd(f_3,y_0) = \gcd(f_3,y_1)$, we show that $a=0$.

\emph{Specific argument for~\ref{item:mainthsmallht-3}}. Here we suppose that~$h(P_i) < \frac{d}{4}$. By Lemma~\ref{RRlem-3} (applied with $\alpha=f_3f_5^2$),
we know that~$\ord_\infty^\star (y_i(t)\pm y)_\infty = \max\left\{2\deg y_i, d\right\}$. Since~$h(P_i) < \frac{d}{4}$, we have~$2\deg y_i < \frac{3d}{2}$
and thus~$\ord_\infty^\star (y_i(t)\pm y)_\infty < \frac{3d}{2}$. Therefore~$\ord_\infty^\star (y_0(t)-y)(y_1(t)+y)_\infty < 3d$ and~$\ord_\infty^\star (\psi)_\infty < d$,
by the left-hand side of~\eqref{eq_recap_psi_formlas}.
On the other hand, according to the right-hand side, one has~$\ord_\infty^\star (\psi)_\infty = \max\left\{2\deg b, 2\deg a + d_1+d_3+5d_5\right\}$ (Lemma~\ref{RRlem-3} again, applied with $\alpha=af_5^2$).
Hence~$2\deg a + d_1+d_3+5d_5 < d$ which means that~$\deg a < d_3$.

If moreover~$\gcd(f_3,y_0) = \gcd(f_3,y_1)$ then by Lemma~\ref{lem:start-3}~\ref{item:3descente_if_gcd}, we know that~$f_3\mid a$
and thus~$a=0$.

\emph{Specific argument for~\ref{item:mainthlargeht-3}}. Here we suppose that~$h(P_i) \geq \frac{d}{4}$, and in particular~$h(P_i) > \frac{d}{6}$.
Recall that if this holds then~$h(P_i) = \frac{\deg y_i}{3} = \frac{\deg x_i}{2}$ and necessarily~$3\mid\deg y_i$.

Let~$P_i = (x_i(t),y_i(t)) \in E(k[t])_{\geq d/4}$, $i=0,1$, be two
integral points having the same image in~$E(k(t)/\lambda^t E'(k(t))$ by the natural map. By Lemma~\ref{lem:start-3}, there exists~$\psi\in k(C_3)$
such that~$(y_0-y)(y_1+y) = \psi^3$.
This equality permits us to compute~$\ord_\infty^\star (\psi)_\infty$; indeed:
\begin{align*}
  \ord_\infty^\star (y_i\pm y)_\infty &= \max\left\{2\deg y_i, d\right\} && \text{by Lemma~\ref{RRlem-3}}\\
                                &= 2\deg y_i && \text{since~$h(P_i) = \frac{\deg y_i}{3} \geq \frac{d}{4}$}
\end{align*}
and then
\begin{align}\label{3descente_ord_psi_1}
&\ord_\infty^\star ((y_0-y)(y_1+y))_\infty = 2\left(\deg y_0 + \deg y_1\right)
&
&\Longrightarrow
&
&\ord_\infty^\star (\psi)_\infty = 2\left(\frac{\deg y_0}{3} + \frac{\deg y_1}{3}\right).
\end{align}
In particular this order is an even integer since~$3$ divides~$\deg y_i$.

On the other hand, taking into account the fact that the function~$\psi$ can be written~$\psi = b + a \frac{y}{f_3}$ with~$a,b\in k[t]$,
we also have
\begin{equation}\label{3descente_ord_psi_2}
\ord_\infty^\star (\psi)_\infty = \max\left\{2\deg b, 2\deg a + d_1+d_3+5d_5\right\}
\end{equation}
(Lemmas~\ref{lem:start-3} and~\ref{RRlem-3} again). Since~$2\nmid d$, only the first integer inside the brackets is even, hence by
comparing formulas~\eqref{3descente_ord_psi_1} and~\eqref{3descente_ord_psi_2} we deduce that:
\begin{align}\label{jaipudidee}
2\deg b &= 2\left(\frac{\deg y_0}{3} + \frac{\deg y_1}{3}\right)
&
&\text{and}
&
&2\deg a + d_1+d_3+5d_5 < 2\deg b.
\end{align}
Adding~$\deg a$ to the last inequality leads to~$3\deg a + d_1 + 5d_5 < \deg a + 2\deg b - d_3$, and thus by~\eqref{c1-3}
one has~$\deg a + 2\deg b - d_3 = \deg(y_0-y_1)$.

Let us now assume that the condition~$h(P_1)\leq h(P_0) < 2h(P_1)$ holds, which applies to any appropriately ordered pair of points in each of the two regions considered (one for obvious reasons, and the other one by Davenport's inequality). Then
\begin{align*}
  \deg a &= d_3 - 2\deg b - \deg(y_0-y_1)\\
         &= d_3 - 2\left(\frac{\deg y_0}{3} + \frac{\deg y_1}{3}\right) + \deg(y_0-y_1) &&\text{by~\eqref{jaipudidee}}\\
         &\leq d_3 - 2\left(\frac{\deg y_0}{3} + \frac{\deg y_1}{3}\right) + \deg y_0 &&\text{since~$h(P_1) \leq h(P_0)$}\\
         &< d_3 &&\text{since~$h(P_0) < 2h(P_1)$.}
\end{align*}
If moreover~$\gcd(f_3,y_0) = \gcd(f_3,y_1)$, then by Lemma~\ref{lem:start-3}~\ref{item:3descente_if_gcd}, we know that~$f_3\mid a$
and thus~$a=0$.

\emph{Common concluding argument}. Let us define the map
$$
E(k[t]) \longrightarrow E(k(t))/\lambda^t E'(k(t)) \times \{0,1\}^{\omega_3},
$$
where the first coordinate is none other than the natural map, and the second coordinate is the map
$$
(x(t),y(t)) \mapsto (v_\pi(\gcd(f_3,y))_{\pi},
$$
where $\pi$ runs through the set of irreducible factors of~$f_3$.
Using Lemma~\ref{lem:start-3}~\ref{item:ifaeq03}, all the previous specific arguments lead to the following fact: when restricted
to~$E(k[t])_{<d/4}$, or~$E(k[t])_{d/4\leq \cdot < d/2}$, or~$E(k[t])_{\geq d/2}$, the preceding map is $1$-to-$1$ if~$\zeta_3\not\in k$ and
$3$-to-$1$ if~$\zeta_3\in k$. Finally, we note that
$$
\dim_{\F_3} E(k(t))/\lambda^t E'(k(t))\leq \rk_{\Z} E(k(t)).
$$

In the case when $\zeta_3\in k$, the curves $E$ and $E'$ are isomorphic, the groups $E(k(t))/\lambda^t E'(k(t))$ and $E'(k(t))/\lambda E(k(t))$ are isomorphic, and hence from the exact sequence of Proposition~\ref{prop:3descent} we are able to deduce that
$$
\dim_{\F_3} E(k(t))/\lambda^t E'(k(t))=\frac{1}{2}\rk_{\Z} E(k(t)).
$$
The result follows.
\end{proof}


\section{Integral points of small height}


We first prove a Lemma which allows us to transport computations of integral points to a convenient Weierstrass model.

\begin{lem}
\label{lem:changeofcurve}
Let $f\in k[t]$ be a polynomial of degree $d$ and let
\begin{align*}
E: y^2&=x^3+f(t),\\
E'': y^2&=x^3+t^{6\lceil d/6\rceil}f(1/t).
\end{align*}
The bijection
\begin{align*}
E(k(t))&\to E''(k(t))\\
(x(t),y(t))&\to (t^{2\lceil d/6\rceil}x(1/t),t^{3\lceil d/6\rceil}y(1/t))
\end{align*}
induces a bijection between the sets of integral points $E(k[t])_{\leq \lceil d/6\rceil}$ and $E''(k[t])_{\leq \lceil d/6\rceil}$ that preserves the canonical height of the points.  Let $C_i, C_i'', D_i, D_i''$, $i=2,3$, be the associated curves and divisors.  Then the isomorphisms
\begin{align*}
C_2&\to C_2''\\
(t,x)&\mapsto (t^{-1},t^{-2\lceil d/6\rceil}x)
\end{align*}
and
\begin{align*}
C_3&\to C_3''\\
(t,y)&\mapsto (t^{-1},t^{-3\lceil d/6\rceil}y),
\end{align*}
and the bijection above, induce commutative diagrams for $i=2,3$:
\begin{equation*}
\begin{CD}
E(k(t)) @>>> \Pic(C_i, \mathbb{Q}.D_i)[i]\\
@V VV @VV V \\
E''(k(t)) @>>> \Pic(C_i'', \mathbb{Q}.D_i'')[i]. \\
\end{CD}
\end{equation*}
\end{lem}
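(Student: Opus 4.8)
Set $\chi:=\lceil d/6\rceil$ and let $\sigma\colon\PP^1_k\to\PP^1_k$ be the $k$-automorphism $t\mapsto 1/t$. The guiding idea is that every object in sight --- the curve $E$, its minimal regular and N\'eron models, the auxiliary curves $C_2,C_3,C_3'$, the divisors $D_2,D_3,D_3'$, the $2$- and $3$-descent maps, and the canonical height --- is functorial for $k$-automorphisms of the base $\PP^1$, and the map of the statement is obtained by applying $\sigma$ and then a normalizing Weierstrass rescaling. First I would note that $E''$ is $k(t)$-isomorphic to the conjugate curve $\sigma^{\ast}E\colon y^2=x^3+f(1/t)$ via the Weierstrass change of variables $(x,y)\mapsto(t^{-2\chi}x,t^{-3\chi}y)$, and that composing the tautological bijection $E(k(t))\xrightarrow{\ \sim\ }(\sigma^{\ast}E)(k(t))$, $(x,y)\mapsto(x(1/t),y(1/t))$, with this isomorphism gives exactly the map $\rho$ of the statement; a one-line substitution shows that the map $E''(k(t))\to E(k(t))$ given by the same formula is its inverse, so $\rho$ is an isomorphism of abelian groups. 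For the restriction to integral points: $P=(x,y)\in E(k[t])_{\le\chi}$ precisely when $x,y\in k[t]$ with $\deg x\le 2\chi$ and $\deg y\le 3\chi$, and then $t^{2\chi}x(1/t)$ and $t^{3\chi}y(1/t)$ are again polynomials of degree $\le2\chi$ and $\le3\chi$, so $\rho(P)\in E''(k[t])_{\le\chi}$ (and symmetrically for $\rho^{-1}$), giving the bijection. For the canonical height I would argue via Shioda's formula exactly as in the proof of Theorem~\ref{thh}: the minimal regular model of $E''$ is $\sigma^{\ast}\Emin$ (pulling back a relatively minimal regular model along an isomorphism of the base yields the relatively minimal regular model), so there is an isomorphism of elliptic surfaces from it onto $\Emin$ lying over $\sigma$; this isomorphism carries the zero section to the zero section and the section of $\rho(P)$ to that of $P$, preserves $\chi(\Emin)$ and all intersection numbers, and matches each bad place $v$ of $\Emin$ with the bad place $\sigma(v)$ of the model of $E''$ together with the fibre component met by the section, so $\contr_v(P)=\contr_{\sigma(v)}(\rho(P))$; Shioda's formula then yields $\hat h_E(P)=\hat h_{E''}(\rho(P))$.

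Next I would treat the auxiliary curves. The Weierstrass isomorphism $E''\cong\sigma^{\ast}E$ above induces the displayed isomorphisms $C_2''\cong\sigma^{\ast}C_2$ and $C_3''\cong\sigma^{\ast}C_3$ (and likewise for $C_3'$), which is a direct check on equations. To see that they send $D_2$ to $D_2''$ and $D_3$ to $D_3''$, I would use the description from the proof of Proposition~\ref{prop:2descent}: $D_2=t^{-1}(\Sigma_2)$, where $\Sigma_2\subset\PP^1$ is the set of places at which $\Egp$ has bad reduction with component group of order divisible by $2$; since the N\'eron model of $E''$ equals $\sigma^{\ast}\Egp$, one gets $\Sigma_2''=\sigma(\Sigma_2)$, and hence $D_2''$ corresponds to $D_2$ under $C_2''\cong\sigma^{\ast}C_2$. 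The same reasoning, with the relevant component groups, handles $D_3$ on $C_3$ and $D_3'$ on $C_3'$. (Alternatively this can be checked by hand from Lemma~\ref{lem:badE}, matching the zeros of $t^{6\chi}f(1/t)$ with those of $f$ and the fibre over the new point at infinity --- old $t=0$ --- with the fibre of $E$ at $\infty$, the congruence conditions on $\deg f$ and on $\deg(t^{6\chi}f(1/t))$ modulo $6$ matching up accordingly; the model-theoretic version is cleaner.)

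It then remains to verify the commutativity of the squares. Write $g\colon C_2\to C_2''$ for the isomorphism $(t,x)\mapsto(1/t,t^{-2\chi}x)$, so that $g^{-1}\colon(u,w)\mapsto(1/u,u^{-2\chi}w)$, using $u$ for the coordinate on the target copy of $\PP^1$. Given $P=(x_0(t),y_0(t))\in E(k(t))$, the clockwise composite sends $P$ to $[\tfrac12\divisor_{C_2}(x_0(t)-x)]$ and then, by push-forward along $g$, to $[\tfrac12\divisor_{C_2''}(x_0(1/u)-u^{-2\chi}w)]$; the counter-clockwise composite sends $P$ to $\rho(P)=(u^{2\chi}x_0(1/u),u^{3\chi}y_0(1/u))$ and then to $[\tfrac12\divisor_{C_2''}(u^{2\chi}x_0(1/u)-w)]$. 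Since $u^{2\chi}x_0(1/u)-w=u^{2\chi}\bigl(x_0(1/u)-u^{-2\chi}w\bigr)$, the two functions differ by $u^{2\chi}\in k(u)^{\times}$, so the two halved divisors differ by the \emph{integral} principal divisor $\divisor_{C_2''}(u^{\chi})$ and therefore represent the same class in $\Pic(C_2'',\Q.D_2'')$; that is the commutativity for $i=2$. For $i=3$ one repeats the computation with $x$ and $2\chi$ replaced by $y$ and $3\chi$: the functions $u^{3\chi}y_0(1/u)-w$ and $y_0(1/u)-u^{-3\chi}w$ differ by $u^{3\chi}$, so the $\tfrac13$-divisors differ by $\divisor_{C_3''}(u^{\chi})$, again integral and principal. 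One should also observe that $\rho$ intertwines the $3$-isogeny $\lambda$ and its dual with their counterparts for $E''$ (immediate from the universality of the defining formulae), so that the intermediate quotient through which $\phi_3$ factors is matched as well.

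The only genuinely delicate point is the identification of divisors: checking that $t\mapsto1/t$ carries $D_2$ to $D_2''$ and $D_3$ to $D_3''$ mixes the behaviour at the finite bad fibres with that at infinity and is governed by the congruence classes of $\deg f$ and of $\deg(t^{6\chi}f(1/t))$ modulo $6$. Routing the argument through the N\'eron model --- for which $\sigma^{\ast}\Egp$ is the N\'eron model of $E''$ with no computation --- removes this case analysis entirely; granting that, the bijection and height statements are soft, and the commutativity of the squares is a one-line divisor manipulation.
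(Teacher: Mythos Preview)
Your proof is correct and follows essentially the same approach as the paper's: both obtain $E''$ by composing the base-change $t\mapsto 1/t$ with the Weierstrass rescaling, both deduce the preservation of canonical height from the fact that $\Emin$ and the minimal model of $E''$ are isomorphic $k$-surfaces via $\sigma$ (so that Shioda's intersection-theoretic formula transfers), and both handle the integral-point bijection by the elementary degree check. The paper is considerably terser --- it asserts that the descent diagrams commute ``for obvious reasons'' and leaves the divisor match-up implicit --- whereas you spell out the key divisor identity $u^{2\chi}x_0(1/u)-w=u^{2\chi}\bigl(x_0(1/u)-u^{-2\chi}w\bigr)$ and explicitly address why $D_i$ corresponds to $D_i''$ (which the paper does not); your N\'eron-model argument for the latter is a nice way to avoid the case analysis on $d\bmod 6$.
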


\begin{proof}
If we base-change the curve $E$ by the automorphism $t\mapsto 1/t$ of $k(t)$, we obtain the elliptic curve $E''$ defined by the equation $y^2=x^3+f(1/t)$. The map $(x(t),y(t))\mapsto (x(1/t), y(1/t))$ induces a bijection $E(k(t))\to E''(k(t))$, whose inverse is given by the same recipe. By the (obvious) change of coordinates $(x,y)\mapsto (t^{2\lceil d/6\rceil}x,t^{3\lceil d/6\rceil}y)$, $E''$ can be defined by the Weierstrass equation
$$
y^2=x^3+t^{6\lceil d/6\rceil}f(1/t).
$$

The main interest of considering this equation is that $t^{6\lceil d/6\rceil}f(1/t)$ is again a polynomial in $t$. We shall now consider integral points on $E''$ with respect to this specific Weierstrass equation.

The explicit bijection $E(k(t))\to E''(k(t))$ described in the statement of the Lemma is obtained by composing the bijection $t\mapsto 1/t$ with the previous change of coordinates. Similarly, $C_i\simeq C_i''$ as $k$-curves, and the bijection is described by the same recipe. The diagram comparing the $2$-descent maps on $E$ and $E''$ (resp. the $3$-descent maps) commutes for obvious reasons.

Denoting by $\Emin\to \PP^1_k$ (resp. $\Emin''\to \PP^1$) the minimal regular model of $E$ (resp. $E''$), we have a Cartesian square
$$
\begin{CD}
\Emin'' @>>> \Emin \\
@VVV @VVV \\
\PP^1 @>>t\mapsto 1/t> \PP^1 \\
\end{CD}
$$
and in particular, $\Emin$ and $\Emin''$ are isomorphic $k$-surfaces. Since the canonical (N{\'e}ron-Tate) height can be computed via intersection theory between divisors on these surfaces \cite[Theorem~8.6]{Shioda2}, the bijection $E(k(t))\to E''(k(t))$ preserves the canonical height. Alternatively, one can check that the heights are preserved by a direct computation, applying Theorem~\ref{thh}.

Finally, we claim that the bijection $E(k(t))\to E''(k(t))$ induces by restriction a bijection between  $E(k[t])_{\leq \lceil d/6\rceil}$ and $E''(k[t])_{\leq \lceil d/6\rceil}$. It suffices to check that $E(k(t))\to E''(k(t))$ sends $E(k[t])_{\leq \lceil d/6\rceil}$ to $E''(k[t])_{\leq \lceil d/6\rceil}$, and similarly for the inverse bijection. This is elementary: if $x(t)$ has degree $\leq 2\lceil d/6\rceil$ then $t^{2\lceil d/6\rceil}x(1/t)$ is a polynomial in $t$, and likewise if $y(t)$ has degree $\leq 3\lceil d/6\rceil$ then $t^{3\lceil d/6\rceil}y(1/t)$ is a polynomial. The other way around is left to the reader.
\end{proof}

Since the canonical height is a quadratic form on $E(\kbar(t))$, we can define an associated bilinear pairing, the height pairing, by 
\begin{equation*}
\langle P,Q \rangle=\hat{h}(P+Q)-\hat{h}(P)-\hat{h}(Q).
\end{equation*}
This pairing gives $E(\kbar(t))/E(\kbar(t))_{\tors}$ the structure of a positive-definite lattice, called the \emph{Mordell-Weil lattice} of $E$. In our setting, the elliptic curves $E$ have complex multiplication by $\mathbb{Z}[\zeta_3]$, where $\zeta_3$ is a primitive third root of unity. Then $\mathbb{Z}[\zeta_3]$ acts on the Mordell-Weil lattice and since
\begin{equation*}
\hat{h}(\alpha P)=N(\alpha)\hat{h}(P), \quad \forall \alpha\in \mathbb{Z}[\zeta_3],
\end{equation*}
(where $N=N_{\Q(\zeta_3)/\Q}$ is the norm), we deduce that $\langle \alpha P, \alpha Q\rangle =N(\alpha)\langle P,Q\rangle$.

We first prove some facts about lattices that will be useful in studying $2$-descent and $3$-descent maps. This leads to an alternative approach to some of the results of Section \ref{sec:ub}.

For a lattice $L$ and $v\in L$, we define the norm of $v$ to be $N(v)=\langle v,v\rangle$ and let $L_\alpha=\{v\in L\mid N(v)=\alpha\}$ be the set of elements of norm $\alpha$. If $L$ also has the structure of a $\mathbb{Z}[\zeta_3]$-module such that $N(\alpha v)=N(\alpha)N(v)$ for all $\alpha\in \mathbb{Z}[\zeta_3], v\in L$, we will call $L$ a $\mathbb{Z}[\zeta_3]$-lattice. We now study the maps $L\to L/2L$ and $L\to L/\sqrt{-3}L$ (when $L$ is a $\mathbb{Z}[\zeta_3]$-lattice), restricted to certain sets of elements.

\begin{lem}
\label{Ldesc}
Let $L$ be a lattice and let $\mu$ be the minimum norm of $L$. Then the canonical map
\begin{align*}
L_{\mu}\to L/2L
\end{align*}
is $2$-to-$1$ and omits $0$ in its image. Suppose further that $L$ is a $\mathbb{Z}[\zeta_3]$-lattice.  Then
\begin{align*}
v\equiv \sqrt{-3}v \pmod{2L}
\end{align*}
for all $v\in L$, and in particular the canonical map
\begin{align*}
\sqrt{-3}L_{\mu}\to L/2L
\end{align*}
is $2$-to-$1$ and omits $0$ in its image. Moreover, the canonical map
\begin{align*}
L_{\mu}\to L/\sqrt{-3}L
\end{align*}
is $3$-to-$1$ and omits $0$ in its image. 
\end{lem}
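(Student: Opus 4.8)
The plan is to prove the three assertions in order. For the map $L_\mu\to L/2L$ I would use only the parallelogram law. First, $0$ is not in the image: if $v=2w\in L_\mu$ then $\mu=N(v)=4N(w)$, which forces $N(w)=0$, hence $w=0$ and $v=0$, absurd. Next, suppose $v,w\in L_\mu$ with $v-w\in 2L$; then $\tfrac12(v-w)\in L$ and $\tfrac12(v+w)=\tfrac12(v-w)+w\in L$, and the parallelogram law gives $N(\tfrac12(v+w))+N(\tfrac12(v-w))=\tfrac14(N(v+w)+N(v-w))=\tfrac14(2N(v)+2N(w))=\mu$. Each of the two summands is a norm of an element of $L$, hence equals $0$ or is $\ge\mu$, so one of $\tfrac12(v\pm w)$ must vanish, i.e.\ $w=\pm v$. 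Since $v\ne0$ the two signs give distinct elements, so every fibre has exactly two elements.

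Now assume $L$ is a $\mathbb{Z}[\zeta_3]$-lattice. Here I use the standard arithmetic of $\mathbb{Z}[\zeta_3]$: the element $\sqrt{-3}$ generates the unique prime above $3$, with $N(\sqrt{-3})=3$ and $\mathbb{Z}[\zeta_3]/\sqrt{-3}\mathbb{Z}[\zeta_3]\cong\mathbb{F}_3$, and moreover $\sqrt{-3}\equiv1\pmod{2\mathbb{Z}[\zeta_3]}$ for either choice of square root (indeed $\sqrt{-3}-1$ equals $2\zeta_3$ or $-2-2\zeta_3$). The congruence $v\equiv\sqrt{-3}\,v\pmod{2L}$ is then immediate from $\sqrt{-3}\,v-v=(\sqrt{-3}-1)v\in 2L$. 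Consequently $\sqrt{-3}\,v$ and $v$ have the same image in $L/2L$, so the image of $\sqrt{-3}L_\mu$ coincides with that of $L_\mu$ (hence omits $0$), and $\sqrt{-3}\,v\equiv\sqrt{-3}\,w\pmod{2L}$ implies $v\equiv w\pmod{2L}$, whence $w=\pm v$ and $\sqrt{-3}\,w=\pm\sqrt{-3}\,v$; this is the asserted $2$-to-$1$ property, reduced to the previous paragraph.

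For the map $L_\mu\to L/\sqrt{-3}L$: it omits $0$ because $v=\sqrt{-3}\,w\in L_\mu$ would give $\mu=N(v)=3N(w)$, forcing $v=0$. For the fibre count, note $\zeta_3\equiv1\pmod{\sqrt{-3}}$ (since $X^2+X+1\equiv(X-1)^2\bmod 3$), so $\zeta_3v$ and $\zeta_3^2v$ lie in $L_\mu$ — here $N(\zeta_3)=1$ — and are congruent to $v$ modulo $\sqrt{-3}L$, and $v,\zeta_3v,\zeta_3^2v$ are pairwise distinct because $v\ne0$; hence every nonempty fibre has at least three elements. Conversely, given $w\in L_\mu$ with $w\equiv v\pmod{\sqrt{-3}L}$, set $u_j:=(w-\zeta_3^jv)/\sqrt{-3}$ for $j=0,1,2$; these lie in $L$ because $w-\zeta_3^jv\in\sqrt{-3}L$. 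Using $N(\zeta_3^jv)=\mu$ and $\sum_{j=0}^{2}\langle w,\zeta_3^jv\rangle=\langle w,(1+\zeta_3+\zeta_3^2)v\rangle=0$, one computes $\sum_{j=0}^{2}N(u_j)=\tfrac13\sum_{j=0}^{2}N(w-\zeta_3^jv)=\tfrac13\cdot6\mu=2\mu$. Since each $N(u_j)$ is $0$ or $\ge\mu$ and the sum is $<3\mu$, some $u_j$ vanishes, i.e.\ $w=\zeta_3^jv$ for some $j$; so the fibre through $v$ is exactly $\{v,\zeta_3v,\zeta_3^2v\}$ and the map is $3$-to-$1$.

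The argument is elementary throughout; the only step calling for a little bookkeeping is the norm identity $\sum_jN(u_j)=2\mu$ in the last paragraph, where one must combine $N(\sqrt{-3})=3$, the vanishing of $\sum_j\langle w,\zeta_3^jv\rangle$, and the fact that on a lattice $N$ is zero only at the origin — but I do not expect any genuine obstacle.
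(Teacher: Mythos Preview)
Your proof is correct. Parts 1 and 2 are essentially the paper's argument (the paper phrases part~1 via the triangle/Cauchy--Schwarz inequality rather than the parallelogram law, but the content is identical).

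For part 3 your approach is genuinely different and cleaner. The paper argues indirectly: given $v,w\in L_\mu$ with $v-w\in\sqrt{-3}L$ and $v\neq w$, it first shows $\langle v,w\rangle\leq -\mu/2$; in the case $\langle v,w\rangle=-\mu/2$ it computes that $v+2w\in\sqrt{-3}L_\mu$ and observes $v+2w\equiv v+2\zeta_3 v\equiv v+2\zeta_3^2 v\pmod{2L}$, whence the already-established $2$-to-$1$ property of $\sqrt{-3}L_\mu\to L/2L$ forces $w\in\{\zeta_3 v,\zeta_3^2 v\}$; the case $w=-v$ is then excluded by a separate norm calculation. Your norm-sum identity $\sum_j N\bigl((w-\zeta_3^j v)/\sqrt{-3}\bigr)=2\mu$ together with pigeonhole handles all cases at once and does not invoke the mod-$2L$ statement. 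The paper's route has the mild conceptual payoff of linking the two descent maps, but yours is shorter and entirely self-contained.
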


\begin{proof}
The minimum norm of $2L$ is $4\mu$. If $v,w\in L_\mu$, then by the triangle inequality, $N(v-w)\leq 4\mu$, with equality if and only if $w=-v$. Thus, $v\equiv w\pmod{2L}, v,w\in L_\mu,$ implies that $w=\pm v$. Since $v$ and $-v$ map to the same element of $L/2L$, the first result follows immediately.

Suppose now that $L$ is a $\mathbb{Z}[\zeta_3]$-lattice. Since $v-\sqrt{-3}v=2(-\zeta_3 v)\in 2L$, we immediately find $v\equiv \sqrt{-3}v \pmod{2L}$ for all $v\in L$. Then the statement for the map $\sqrt{-3}L_{\mu}\to L/2L$ follows immediately from the same statement for the map $L_\mu\to L/2L$.

We now prove the last statement. First, we note that since $\sqrt{-3}$ divides $1-\zeta_3$ and $1-\zeta_3^2$ in $\mathbb{Z}[\zeta_3]$, we have
\begin{align}
\label{-3L}
v\equiv \zeta_3 v\equiv \zeta_3^2 v\pmod{\sqrt{-3}L}
\end{align}
for all $v\in L$.

Let $v,w\in L_\mu$, $v\neq w$. Then, by definition of $\mu$, we have
\begin{align*}
N(v-w)=2\mu-2\langle v,w\rangle\geq \mu. 
\end{align*}
After possibly replacing $w$ by $-w$, it follows that if $v\neq \pm w$, then $|\langle v,w\rangle|\leq \frac{1}{2}\mu$. 

Suppose now that $v-w=\sqrt{-3}u\in \sqrt{-3}L, v\neq w$. Then $N(v-w)=N(\sqrt{-3}u) \geq 3\mu$, and by the same calculation as above, $\langle v,w\rangle\leq -\frac{1}{2}\mu$.  So we must have either equality $\langle v,w\rangle= -\frac{1}{2}\mu$ or $v=-w$. If $v\neq - w$, then
\begin{align*}
N(v+2w)=N(v)+4\langle v,w\rangle+4N(w)=3\mu 
\end{align*}
and
\begin{align*}
v+2w\equiv v+2\zeta_3 v\equiv v+2\zeta_3^2 v \pmod{2L}. 
\end{align*}
We have $v+2w=(v-w)+3w=\sqrt{-3}u+3w=\sqrt{-3}(u-\sqrt{-3}w)$ and so $N(v+2w)=N(\sqrt{-3}(u-\sqrt{-3}w))=3N(u-\sqrt{-3}w)$, which is equal to $3\mu$. It follows that $N(u-\sqrt{-3}w)=\mu$ and $v+2w=\sqrt{-3}(u-\sqrt{-3}w)\in \sqrt{-3}L_\mu$. We also have  $v+2\zeta_3 v=(1+2\zeta_3) v=\sqrt{-3}v\in \sqrt{-3}L_\mu$ and similarly $v+2\zeta_3^2 v\in \sqrt{-3}L_\mu$. Since $\sqrt{-3}L_{\mu}\to L/2L$ is $2$-to-$1$, we conclude that $w=\zeta_3 v$ or $w=\zeta_3^2 v$. 

Finally, we show that $w=-v$ is impossible. Suppose $2v=\sqrt{-3}u\in \sqrt{-3}L$. Then taking norms of both sides gives $N(u)=\frac{4}{3}\mu$. We also have
\begin{align*}
2v+2\zeta_3^2u=(\sqrt{-3}+2\zeta_3^2)u=-u
\end{align*}
and so
\begin{align*}
N(2v+2\zeta_3^2u)=4N(v+\zeta_3^2u)=N(-u)=\frac{4}{3}\mu,
\end{align*}
a contradiction since this would imply $N(v+\zeta_3^2u)=\frac{1}{3}\mu$. Therefore $v\not\equiv -v\pmod{\sqrt{-3}L}$. Combined with the above and noting \eqref{-3L}, we see that $L_{\mu}\to L/\sqrt{-3}L$ is $3$-to-$1$ and obviously omits $0$ in its image.

\end{proof}

\begin{lem}
\label{mu}
Assume hypotheses~\ref{field_k} -- \ref{f12} and that $f$ is not a perfect power in $\kbar[t]$. Let $\mu$ be the minimal (nonzero) canonical height of a point in $E(\kbar(t))$ and let
\begin{align*}
E_{\min}&=\{P\in E(\kbar(t))\mid \hat{h}(P)=\mu\}\\
E'_{\min}&=\{P\in E'(\kbar(t))\mid \hat{h}(P)=\mu\}.
\end{align*}
Let $\tilde{\phi}_2$ and $\tilde{\phi}_3$ denote the canonical lifts of the descent maps $\phi_2$ and $\phi_3$ (defined in Prop.~\ref{prop:2descent} and Prop.~\ref{prop:3descent}) to $E_{\min}$, and let $A_2$ and $A_3$ be the images of $\tilde{\phi}_2$ and $\tilde{\phi}_3$, respectively. Similarly, define $\tilde{\phi}_3'$ and $A_3'$ associated to the descent map $\phi_3'$ and $E'_{\min}$.  Then there are $2$-to-$1$ maps
\begin{align*}
\{P\in E_{\min}\mid x(P)\in k(t)\}&\to A_2(k),\\
\{P\in \sqrt{-3} E_{\min}\mid x(P)\in k(t)\}&\to A_2(k),
\end{align*}
and $3$-to-$1$ maps
\begin{align*}
\{P\in E_{\min}\mid y(P)\in k(t)\}&\to A_3(k),\\
\{P\in \sqrt{-3} E_{\min}\mid y(P)\in k(t)\}&\to A_3'(k).
\end{align*}
\end{lem}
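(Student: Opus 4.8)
The plan is to deduce all four claims from Lemma~\ref{Ldesc}, applied to the Mordell--Weil lattice $L=E(\kbar(t))$, together with the injectivity of the descent maps over $\kbar$ (Propositions~\ref{prop:2descent} and~\ref{prop:3descent}) and a Galois-descent argument. The necessary preliminary is that, under the hypotheses of the Lemma, $E$ and $E'$ are torsion-free over $\kbar(t)$: since $f$ is not a perfect power, the exponents $1,2,3,4,5$ in $f=f_1f_2^2f_3^3f_4^4f_5^5$ force $f$ to be neither a square nor a cube in $\kbar[t]$, so neither curve has a nontrivial $\kbar(t)$-rational $2$- or $3$-torsion point; as $E$ has complex multiplication by $\Z[\zeta_3]$ and the bad fibres of Lemma~\ref{lem:badE} have component groups of exponent dividing $6$, there is no torsion at all. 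Hence $L$ is a $\Z[\zeta_3]$-lattice for the height pairing with $N(P)=2\hat h(P)$, its minimum norm equals $2\mu$, and $L_{2\mu}=E_{\min}$, $\sqrt{-3}\,L_{2\mu}=\sqrt{-3}E_{\min}$. Over $\kbar$ there is an isomorphism $\iota\colon E'\simeq E$ with $\iota\circ\lambda=[u\sqrt{-3}]$ for a unit $u\in\Z[\zeta_3]^{\times}$ (the only degree-$3$ endomorphisms of $E$); consequently $\lambda^tE'(\kbar(t))=\sqrt{-3}E(\kbar(t))$, the map $\lambda^t$ restricts to a bijection $E'_{\min}\to\sqrt{-3}E_{\min}$, and the $\kbar$-forms of $\phi_2,\phi_3,\phi_3'$ become injective on $L/2L$, on $L/\sqrt{-3}L$, and (transported by $\iota$) on $L/\sqrt{-3}L$.

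Now fix one of the four maps. Injectivity of the relevant descent map plus Lemma~\ref{Ldesc} shows that the fibre of $\tilde{\phi}_2$ (resp.\ of $\tilde{\phi}_3$), computed inside $E_{\min}$ — or inside $\sqrt{-3}E_{\min}$, where one uses $v\equiv\sqrt{-3}v\pmod{2L}$ and $v\equiv\zeta_3 v\equiv\zeta_3^2 v\pmod{\sqrt{-3}L}$ — over any point of its image is exactly $\{Q,-Q\}$ (resp.\ $\{Q,\zeta_3 Q,\zeta_3^2Q\}$). The key elementary observation is that $Q$ and $-Q$ share an $x$-coordinate, while $\{Q,\zeta_3 Q,\zeta_3^2 Q\}$ is precisely the set of points of $E$ with a given $y$-coordinate (the order-$3$ automorphisms of $y^2=x^3+f$ fix $y$ and cyclically permute the cube roots of $y^2-f$). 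Thus restricting to points with $x\in k(t)$ (resp.\ $y\in k(t)$) keeps the map $2$-to-$1$ (resp.\ $3$-to-$1$) onto its image. That the image is all of $A_2(k):=A_2\cap\Pic(C_2,\Q.D_2)[2]$ (resp.\ $A_3(k)$, $A_3'(k)$) is then a Galois-descent argument: if $a=\tilde{\phi}(Q)$ is $\Gal(\kbar/k)$-invariant, then for every $\sigma$ the equivariance of $\tilde{\phi}$ (its formula $\tfrac1n\divisor(z(Q)-z)$ involves only the $k$-rational coordinate function $z$ on the $k$-curve, and $D_i$ is defined over $k$) puts $\sigma Q$ in the fibre over $a$, so $\sigma$ fixes the relevant coordinate of $Q$; as $k$ is perfect, that coordinate lies in $k(t)$ and $Q$ lies in the restricted domain.

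For the fourth map one must first read it correctly: it is $\tilde{\phi}_3'$ on $E'_{\min}$ transported along the bijection $\lambda^t\colon E'_{\min}\to\sqrt{-3}E_{\min}$, i.e.\ $P=\lambda^t(R)\mapsto\tilde{\phi}_3'(R)$. Running the argument of the previous paragraph on $E'$ shows $\tilde{\phi}_3'$ is $3$-to-$1$ from $E'_{\min}$ onto $A_3'$ with fibres the $y$-fibres of $E'$; transporting by $\lambda^t$ (which intertwines the $\Z[\zeta_3]$-actions up to the relabelling of $\zeta_3$) turns these into the triples $\{P,\zeta_3 P,\zeta_3^2 P\}\subset\sqrt{-3}E_{\min}$, which are again $y$-fibres of $E$. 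One checks that $y(P)\in k(t)$ implies $y(R)\in k(t)$ (because $\sigma P$ lands in the $y$-fibre $\{P,\zeta_3 P,\zeta_3^2P\}$, hence $\sigma R$ lands in the $y$-fibre of $R$ on $E'$), so the transported map genuinely takes values in $A_3'(k)$, and the same Galois argument as above makes it $3$-to-$1$ onto $A_3'(k)$.

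I expect the main obstacle to be the identifications collected in the first paragraph — that $E_{\min}$ really is the set of minimal vectors of a lattice (which forces the torsion-freeness, and is exactly where ``$f$ not a perfect power'' and the fibre types of Lemma~\ref{lem:badE} are used), and that the isogeny-theoretic subgroups $\lambda^tE'(\kbar(t))$, $\lambda E(\kbar(t))$ and the sets $E'_{\min},\sqrt{-3}E_{\min}$ match multiplication by $\sqrt{-3}$ in the CM structure, all while keeping the factor $N=2\hat h$ straight. Once that is done, Lemma~\ref{Ldesc} delivers the fibre sizes and the remaining verifications are routine. (We may of course assume $E(\kbar(t))\neq 0$, since otherwise every set in sight is empty and the statement is vacuous.)
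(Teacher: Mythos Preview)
Your proposal is correct and follows essentially the same approach as the paper: reduce to Lemma~\ref{Ldesc} on the Mordell--Weil $\Z[\zeta_3]$-lattice (after checking torsion-freeness from the fibre types and ``$f$ not a perfect power''), identify the fibres of the descent maps as $\{\pm Q\}$ and $\{Q,\zeta_3 Q,\zeta_3^2 Q\}$, and conclude by Galois equivariance. The only noteworthy difference is in the fourth map: the paper observes directly that the $y$-coordinate of $\lambda^t$ is a rational function in $y$ over $k$, which gives $y(P)\in k(t)\Leftrightarrow y(R)\in k(t)$ in one stroke, whereas you recover this via the Galois/fibre argument; both are fine.
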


\begin{proof}

Note that if $(x_0,y_0)\in E_{\min}$, then $\tilde{\phi}_2$ sends $(x_0,\pm y_0)$ to the same element, and $\tilde{\phi}_3$ sends $(\zeta_3^i x_0,y_0)$, $i=0,1,2$, to the same element. Since $\tilde{\phi}_2$ and $\tilde{\phi}_3$ are $\Gal(\kbar/k)$-equivariant maps, it is immediate that if $\tilde{\phi}_2$ is $2$-to-$1$, then it induces a $2$-to-$1$ map $\{(x_0,y_0)\in E_{\min}\mid x_0\in k(t)\}\to A_2(k)$, and similarly if $\tilde{\phi}_3$ is $3$-to-$1$, it induces a $3$-to-$1$ map $\{(x_0,y_0)\in E_{\min}\mid y_0\in k(t)\}\to A_3(k)$.

Let $L$ be the Mordell-Weil lattice associated to $E(\kbar(t))$.  The elliptic curve $E$ has complex multiplication by the ring of integers $\O_{\mathbb{Q}(\zeta_3)}=\mathbb{Z}[\zeta_3]$ of $\mathbb{Q}(\zeta_3)=\mathbb{Q}(\sqrt{-3})$, and this induces a $\mathbb{Z}[\zeta_3]$-lattice structure on $L$. According to \cite[Theorem~8.7]{Shioda2}, the identity component of the N{\'e}ron model of $E$ has no torsion; since $f$ is not a perfect power, it follows easily from the types of singular fibers of $E$ that the torsion subgroup of $E(\kbar(t))$ is trivial, and we have an isomorphism of $\mathbb{Z}[\zeta_3]$-modules $E(\kbar(t))\to L$. 

Let $\psi$ be the isomorphism (over $k(\sqrt{-3})$) $\psi:E\to E'$, $(x,y)\mapsto (-3x, -3\sqrt{-3}y)$. Then it is easily verified that $\lambda^t(\psi(P))=\sqrt{-3}P$, and so $\lambda^t(E'(\kbar(t)))=\sqrt{-3}E(\kbar(t))$.  It follows that we have commutative diagrams
\begin{equation*}
\begin{CD}
E_{\min} @>>> E(\kbar(t))/2E(\kbar(t)) \\
@V VV @VV  V \\
L @>>> L/2L \\
\end{CD}
\end{equation*}
and
\begin{equation*}
\begin{CD}
E_{\min} @>>> E(\kbar(t))/\lambda^t(E'(\kbar(t))) \\
@V VV @VV  V \\
L @>>> L/\sqrt{-3}L. \\
\end{CD}
\end{equation*}

Then by Lemma~\ref{Ldesc}, the maps $\tilde{\phi}_2$ and $\tilde{\phi}_3$ are $2$-to-$1$ and $3$-to-$1$, respectively, proving the statements for $E_{\min}$ (using the previous remarks).

Since $\sqrt{-3}\equiv 1\pmod{2\mathbb{Z}[\zeta_3]}$, we have a commutative diagram 
\begin{equation*}
\begin{CD}
E_{\min} @>>> E(\kbar(t))/2E(\kbar(t)) \\
@V\sqrt{-3} VV @VV id V \\
\sqrt{-3}E_{\min} @>>> E(\kbar(t))/2E(\kbar(t)) \\
\end{CD}
\end{equation*}
proving that there is a $2$-to-$1$ map $\{P\in \sqrt{-3} E_{\min}\mid x(P)\in k(t)\}\to A_2(k)$. Finally, we note that $\lambda^t:E'_{\min}\to \sqrt{-3}E_{\min}$ is bijective, and the $y$-coordinate of $\lambda^t$ is a rational function in $y$ over $k$. It follows that the composition
\begin{align*}
\{P\in \sqrt{-3} E_{\min}\mid y(P)\in k(t)\}\stackrel{(\lambda^t)^{-1}}{\rightarrow} \{P\in E'_{\min}\mid y(P)\in k(t)\}\to A_3'(k)
\end{align*}
is $3$-to-$1$.
\end{proof}

We study in more detail integral points of small height when $F(t,u)$ is squarefree up to a power of a single linear form (equivalently, up to a change of variable, the inhomogeneous polynomial $f(t)$ is squarefree). In this case, we relate integral points of small height with torsion points lying on certain (generalized) Brill-Noether loci.

Classically, for a curve $C$ and positive integer $d$, one studies the Brill-Noether locus  consisting of the classes in $\Pic^d(C)$ that can be represented by effective divisors of degree $d$. Choosing a base divisor $D_0$ of degree $d$, one may embed the Brill-Noether locus in $\Pic^0(C)$ via the mapping $[D]\mapsto [D-D_0]$. In our setting, suppose that $f(t)$ is squarefree, and let $C$ be one of the curves $C_2$ or $C_3$ arising in the $2$-descent and $3$-descent maps. In either case, we let $D_\infty$ denote the reduced divisor with support $t^{-1}(\infty)$. Let $P=(x_0,y_0)\in E(\kbar[t])$ be an integral point of canonical height $n=\hat{h}(P)=h(P)\in \frac{1}{6}\mathbb{Z}$. Then $\deg x_0\leq 2n$ and $\deg y_0\leq 3n$. For a $\mathbb{Q}$-divisor $D$, write $D\geq 0$ if $D$ is effective. We shall see in the proof of Theorem~\ref{th2desc} that on $C_2$,
\begin{align*}
\frac{1}{2}\divisor(x_0-x)+ \frac{3n}{\deg D_\infty}D_\infty\geq 0
\end{align*}
and on $C_3$,
\begin{align*}
\frac{1}{3}\divisor(y_0-y)+ \frac{2n}{\deg D_\infty}D_\infty\geq 0.
\end{align*}

Therefore, if we define
\begin{align*}
W_n(C_2)[2]&=\{[D]\in \Pic(C_2, \mathbb{Q}.D_2)[2] \mid D+n/(\deg D_\infty)D_\infty\geq 0\}\\
W_n(C_3)[3]&=\{[D]\in \Pic(C_3, \mathbb{Q}.D_3)[3]\mid D+n/(\deg D_\infty)D_\infty\geq 0\},
\end{align*}
then the $2$ and $3$-descent maps induce maps
\begin{align*}
\left\{P=(x_0,y_0)\in E(\kbar(t))\mid x_0\in k[t], \hat{h}(P)\leq n\right\}&\to W_{3n}(C_2)[2]\\
\left\{P=(x_0,y_0)\in E(\kbar(t))\mid y_0\in k[t], \hat{h}(P)\leq n\right\}&\to W_{2n}(C_3)[3].
\end{align*}

\subsection{Arguments using $2$-descent}

We assume throughout that $f(t)$ is squarefree.
\begin{thm}
\label{th2desc}
Let $n\in\mathbb{Q}, n>0$. Then there is a map
\begin{align*}
\phi_2:\left\{P=(x_0,y_0)\in E(\kbar(t))\mid x_0\in k[t], 0<\hat{h}(P)\leq n\right\}&\to W_{3n}(C_2)[2]\setminus \{0\}\\
P=(x_0,y_0)&\mapsto \left[\frac{1}{2}\divisor(x_0-x)\right]
\end{align*}
\begin{enumerate}
\item If $n<d/3$, the map $\phi_2$ is $2$-to-$1$ onto its image.
\item If $n\leq d/6+1/3$, the map $\phi_2$ is $2$-to-$1$ and surjective.\label{th2descii}
\end{enumerate}
\end{thm}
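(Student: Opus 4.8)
\emph{Strategy and well-definedness.} Since $f$ is squarefree we have $\hat h=h$ (Corollary to Theorem~\ref{thh}), $E(\kbar(t))$ is torsion-free (as in the proof of Lemma~\ref{mu}), and every affine point $P=(x_0,y_0)$ satisfies $h(P)\ge d/6$ (from $y_0^2=x_0^3+f$, $\deg f=d$). Everything runs through Lemmas~\ref{RRlem} and~\ref{lem:start}, which hold verbatim in the squarefree case ($f_2=f_4=1$) and, for points with $x$-coordinate in $k[t]$, over $\kbar$. To see $\phi_2$ is well-defined: for $P=(x_0,y_0)$ with $x_0\in k[t]$ and $\hat h(P)>0$, the function $x_0-x$ is regular on $C_2$ away from $t^{-1}(\infty)$, so $\tfrac12\divisor(x_0-x)\ge 0$ there; away from $t^{-1}(\infty)$ its coefficients are even because $(x_0-x)(x_0-\zeta_3x)(x_0-\zeta_3^2x)=x_0^3-x^3=y_0^2$ has even divisor while $x_0-x$ can vanish at only one of the three points of $C_2$ over each finite $\rho$ (this is the parity argument behind Proposition~\ref{prop:2descent}), and the same identity handles the points over $\infty$. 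Thus $[\tfrac12\divisor(x_0-x)]\in\Pic(C_2,\mathbb Q.D_2)[2]$, and by Lemma~\ref{RRlem} its pole orders at $t^{-1}(\infty)$ are bounded, using $\deg x_0\le 2\hat h(P)\le 2n$ and $d\le 6\hat h(P)\le 6n$, by exactly what the definition of $W_{3n}(C_2)[2]$ requires. Finally $\phi_2(P)\ne 0$: otherwise $x_0-x=c\psi^2$ with $\psi\in\kbar(C_2)$ regular outside $\infty$, hence $\psi=\gamma+\beta x+\alpha x^2$; reducing modulo $x^3=-f$ and matching the coefficients of $x^2$ then of $x$ forces $\alpha\ne 0$, and comparing pole orders at $\infty$ via Lemma~\ref{RRlem} yields $\hat h(P)\ge 2d/3$, contradicting $n<2d/3$ (which holds under either hypothesis of the theorem).

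\emph{The ``$2$-to-$1$'' claim.} Since $E(\kbar(t))$ is torsion-free, $P\ne -P$ for $P$ in the domain and $\phi_2(P)=\phi_2(-P)$, so it suffices to show $\phi_2(P_0)=\phi_2(P_1)$ with $\hat h(P_i)\le n<d/3$ implies $P_0=\pm P_1$. Equality over $k$ gives equality over $\kbar$, so $\tfrac12\divisor(x_0-x)-\tfrac12\divisor(x_1-x)$ is principal, and the proof of Lemma~\ref{lem:start} applies (the hypothesis that $f_1$ is nonconstant still forces $c\ne 0$), producing $\psi=c+bx+ax^2\in\kbar(C_2)$ with $\psi^2=(x_0-x)(x_1-x)$ and the relations~\eqref{c2}--\eqref{c0}. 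Computing $\ord_\infty^\star(\psi)_\infty$ from $\psi^2=(x_0-x)(x_1-x)$ (using $\deg x_i\le 2\hat h(P_i)<2d/3$) and from Lemma~\ref{RRlem} (which gives $\ord_\infty^\star(\psi)_\infty\ge 2d$ if $3\nmid d$, resp.\ $\ge 2d/3$ if $3\mid d$, whenever $a\ne 0$) forces $a=0$, whence $P_0=\pm P_1$ by Lemma~\ref{lem:start}(ii). This is the argument of Theorem~\ref{thm:main}\ref{item:mainthsmallht} and Proposition~\ref{prop:smallheigt3midd}, uniformly in $d\bmod 3$. For part~(ii) with $d\le 2$ (where $n$ may equal $d/3$), the statement follows from part~(i) at a smaller $n$, since for $d=2$ a parity obstruction in $y_0^2=x_0^3+f$ rules out heights in $(1/3,2/3]$, and for $d=1$ the domain is empty.

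\emph{Surjectivity.} Fix $[D]\in W_{3n}(C_2)[2]\setminus\{0\}$ and a representative $D$ with $D+\tfrac{3n}{\deg D_\infty}D_\infty\ge 0$. The Kummer sequence~\eqref{eq:kummerlog} for the $k$-curve $C_2$, after dividing out a square, gives $h\in k(C_2)^\times$ with $\divisor(h)=2D$, regular outside $t^{-1}(\infty)$ and with $\ord_\infty^\star(h)_\infty\le 6n/\deg D_\infty$. Writing $h=\gamma+\beta x+\alpha x^2$ (Lemma~\ref{RRlem}) and using $n\le d/6+1/3$, the pole bound forces $\alpha=0$ and $\beta$ constant when $d\ge 3$; the cases $d\le 2$ are handled directly (for $d=1$, $C_2$ is rational, so $W_{3n}(C_2)[2]=\{0\}$ and there is nothing to prove; for $d=2$, $C_2$ is an elliptic curve and each nontrivial class equals $[W-\infty]$ for $W$ one of the three points over the ramification value $s$ of the quadratic $x_0^3+f$, realised by $h=x-x_0$). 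If $\beta=0$ then $h=\gamma\in k[t]$ and $\divisor(\gamma)=2D$ forces every $v_\rho(\gamma)$ even (a point of $\PP^1$ off the roots of $f$ splits completely in $C_2$), so $\gamma$ is a constant times a square and $[D]=0$, excluded. So after rescaling $h=\gamma+x$, and $N_{k(C_2)/k(t)}(h)=\gamma^3-f$. Since $\divisor_{\PP^1}(N(h))$ is the pushforward of $\divisor(h)=2D$, it is twice an integral divisor; as $\gamma^3-f$ is a polynomial, $\gamma^3-f=c\,r(t)^2$ with $c\in k^\times$, $r\in k[t]$. Then $x_0:=-\gamma\in k[t]$ and $y_0:=\sqrt{-c}\,r\in\kbar[t]$ give $P=(x_0,y_0)\in E(\kbar(t))$ with $\phi_2(P)=\tfrac12\divisor(-\gamma-x)=\tfrac12\divisor(h)=D$; the pole bound on $h$ gives $\deg x_0\le 2n$, $\deg y_0\le 3n$, so $0<\hat h(P)=h(P)\le n$ (nonzero by torsion-freeness). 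Hence $\phi_2$ is surjective.

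\emph{Main obstacle.} Once Lemmas~\ref{RRlem} and~\ref{lem:start} are in hand the skeleton is short; the real work, and the most error-prone point, is the bookkeeping of pole orders at $t^{-1}(\infty)$ uniformly in $d\bmod 3$ (and $d\bmod 6$, which controls $D_2$ and $\deg D_\infty$) — both for well-definedness/omitting $0$ and for pinning down $\alpha=0$, $\beta$ constant in the surjectivity step. The secondary difficulty is the honest check that $N_{k(C_2)/k(t)}(\gamma+x)=\gamma^3-f$ is a constant times a square and that the resulting point has height $\le n$, together with dispatching the genuinely degenerate low-degree cases $d\le 2$, where $\alpha$ need not vanish.
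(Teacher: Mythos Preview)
Your proof is correct and follows essentially the same route as the paper's. Two small remarks. First, for the $2$-to-$1$ claim you cite Theorem~\ref{thm:main}\ref{item:mainthsmallht} and Proposition~\ref{prop:smallheigt3midd}, but those only cover heights $\le d/6$; the result you actually need (and whose argument you correctly reproduce, forcing $a=0$ from $\ord_\infty^\star(\psi)_\infty<2d$ when $\deg x_i<2d/3$) is the first half of the proof of Theorem~\ref{thm:main}\ref{item:mainthfeqf1}, which treats $E(k[t])_{<d/3}$ in the squarefree case. Second, your surjectivity argument via the norm $N_{k(C_2)/k(t)}(\gamma+x)=\gamma^3-f$ is more explicit than the paper's (which simply asserts that $\divisor(x_0-x)=2D$ implies the existence of $y_0\in\kbar[t]$), but the content is the same: both amount to observing that $x_0^3+f$ has even valuation at every finite place because $t_*(2D)$ does. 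Your explicit handling of ``omitting $0$'' (requiring $n<2d/3$, which holds under both hypotheses) is also a point the paper leaves implicit.
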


\begin{rmq}
\label{smallcan}
For $0<n\leq d/6+1/3$, there are points of canonical height $n$ on $E$ only if:
\begin{align*}
n=\frac{d}{6}, \quad &\text{if }d\equiv 0,2,3\pmod{6},\\
n=\frac{d}{6}, \frac{d}{6}+\frac{1}{3},  \quad &\text{if }d\equiv 4\pmod{6},\\
n=\frac{d+1}{6}, \quad &\text{if }d\equiv 5\pmod{6}.
\end{align*}
In particular, there are no such points if $d\equiv 1\pmod{6}$.
\end{rmq}

We note that if $3\nmid d$ and $P=(x_0,y_0)\in E(\kbar[t])$, then $3\hat{h}(P)=3h(P)=\deg y_0$.  

\begin{proof}[Proof of Theorem~\ref{th2desc}]
Let $P=(x_0(t),y_0(t))\in E(\kbar(t))$, with $x_0\in k[t]$ and $0<h(P)=\hat{h}(P)\leq n$.  It is elementary that $h(P)=\frac{1}{6}\max \{d, 3\deg x_0\}$.  By Proposition~\ref{prop:2descent}, we have $[D]\in \Pic(C_2,\mathbb{Q}.D_2)[2]$, where $D=\frac{1}{2}\divisor(x_0(t)-x)$.  By Lemma~\ref{RRlem},
\begin{align*}
\ord_\infty D=\frac{1}{2}\max \{d, 3\deg x_0\}=3h(P)\leq 3n=3n/\deg D_\infty & \text{ if }3\nmid d,\\
\max_{1\leq i\leq 3}\ord_{\infty_i} D=\frac{1}{6}\max \{d, 3\deg x_0\}=h(P)\leq 3n/\deg D_\infty & \text{ if }3\mid d.
\end{align*}

Thus, $D+3n/(\deg D_\infty)D_\infty$ is effective and $\phi_2$ is well-defined.   If $n<d/3$, then $P\in  E(\kbar[t])_{<d/3}$, and so $\phi_2$ is $2$-to-$1$ onto its image by the proof of Theorem~\ref{thm:main}~\ref{item:mainthfeqf1}.

Suppose now that $n\leq \frac{d}{6}+\frac{1}{3}$.  We first claim that $\phi_2$ is $2$-to-$1$ onto its image. If $d\geq 3$, then this implies that $n<\frac{d}{3}$ and $\phi_2$ is  $2$-to-$1$ from the above.  If $d=2$, then it is easy that $0<h(P)\leq \frac{2}{3}$ implies that $h(P)=\frac{1}{3}$, and the claim follows taking $n=\frac{1}{3}<\frac{d}{3}$.  If $d=1$ then $E(\kbar(t))$ is trivial and the claim is vacuously true. 

It remains to show that $\phi_2$ is surjective.   Let $[D]\in W_{3n}(C_2)[2]\setminus\{0\}$ so that 
\begin{align*}
D_0:=D+3n/(\deg D_\infty)D_\infty\in \Div(C_2,\mathbb{Q}.D_2) 
\end{align*}
is effective.  Then
\begin{align*}
2D=2D_0-\frac{6n}{\deg D_\infty}D_\infty
\end{align*}
is principal and $6n\leq d+2<d+3$.  It follows from Lemma~\ref{RRlem} that there exists a polynomial $x_0(t)\in k[t]$ with $\deg x_0\leq 2n$ and $a\in k$ such that $\divisor(x_0(t)+ax)=2D$.  If $a=0$, then this implies that $x_0(t)$ is a perfect square (up to a constant) and $D$ itself is principal, contradicting $[D]\neq 0$.  Then $a\neq 0$ and replacing $x_0$ by $-ax_0$, we obtain a polynomial $x_0(t)\in k[t]$ with $\divisor(x_0(t)-x)=2D$.  This implies that there exists $y_0\in \kbar[t]$ such that $y_0^2=x_0^3+f$.  Moreover, using Lemma~\ref{RRlem} to compute the poles of $x_0(t)-x$, we find that $6n\geq d$. Let $P=(x_0,y_0)$.  Then $h(P)=\max\{\frac{d}{6}, \frac{1}{2}\deg x_0\}\leq n$ and $\phi_2(P)=[D]$. So $\phi_2$ is surjective as well.
\end{proof}

An elementary calculation using the Riemann-Hurwitz formula yields the canonical divisor (and the genus of $C_2$, already given in \eqref{eq:genusofC2}).

\begin{lem}
\label{canlem}
Let $K_{C_2}$ be the canonical divisor of $C_2$.  Then
\begin{align*}
K_{C_2}\sim 
\begin{cases}
(2d-4)\infty \quad &\text{if }3 \nmid d,\\
\left(\frac{2d}{3}-2\right)D_\infty, \quad &\text{if } 3\mid d.
\end{cases}
\end{align*}
\end{lem}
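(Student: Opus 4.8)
The plan is to apply the Riemann--Hurwitz formula to the degree-$3$ cyclic cover $\pi\colon C_2\to\PP^1$, $(t,x)\mapsto t$, which is precisely the degree-$3$ map used throughout this section. Since $f$ is squarefree and $\operatorname{char}k\neq 3$, this cover is tamely ramified, and by Kummer theory for $x^3=-f(t)$ its branch points are exactly the $d$ zeros of $f$, together with $\infty\in\PP^1$ when $3\nmid d$. Over each zero $\alpha$ of $f$ the cover is totally ramified at a single point $P_\alpha$, and over $\infty$ it is totally ramified (a single point $\infty\in C_2$) when $3\nmid d$, while for $3\mid d$ the fiber splits into three unramified points $\infty_1,\infty_2,\infty_3$ with $\infty_1+\infty_2+\infty_3=D_\infty$. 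First I would record this ramification data so that the ramification divisor reads
\[
R=\sum_{f(\alpha)=0}2P_\alpha\;+\;\begin{cases}2\infty & \text{if }3\nmid d,\\[2pt] 0 & \text{if }3\mid d.\end{cases}
\]

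Next, choosing $K_{\PP^1}\sim -2\,[\infty]$ on the base, the divisor form of Riemann--Hurwitz gives $K_{C_2}\sim \pi^*K_{\PP^1}+R=-2\,\pi^*[\infty]+R$, where $\pi^*[\infty]=3\infty$ if $3\nmid d$ and $\pi^*[\infty]=D_\infty$ if $3\mid d$. It then remains only to rewrite $\sum_\alpha P_\alpha$ in terms of $\infty$ (resp.\ $D_\infty$), for which I would use the principal divisor of the function $x$ on $C_2$: from $x^3=-f(t)$ one sees $x$ has a simple zero at each $P_\alpha$ and no other zeros, while from the behaviour of $t$ at infinity one gets $(x)_\infty=d\,\infty$ if $3\nmid d$ and $(x)_\infty=\tfrac d3 D_\infty$ if $3\mid d$; hence $\divisor(x)=\sum_\alpha P_\alpha-d\,\infty$ (resp.\ $\sum_\alpha P_\alpha-\tfrac d3 D_\infty$), so that $\sum_\alpha P_\alpha\sim d\,\infty$ (resp.\ $\sim\tfrac d3 D_\infty$). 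For the case $3\nmid d$ one could alternatively use $\divisor(t-\alpha)=3P_\alpha-3\infty$ to deduce $P_\alpha\sim\infty$ directly.

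Combining these ingredients, when $3\nmid d$ we obtain $K_{C_2}\sim -6\infty+(2d+2)\infty=(2d-4)\infty$, and when $3\mid d$ we obtain $K_{C_2}\sim -2D_\infty+\tfrac{2d}{3}D_\infty=(\tfrac{2d}{3}-2)D_\infty$, as asserted; taking degrees recovers the genus formula \eqref{eq:genusofC2}. The only step needing genuine care is the identification of $\sum_\alpha P_\alpha$ as an \emph{exact} multiple of $\infty$ or $D_\infty$ in the case $3\mid d$: one must not stop at the relation $3\sum_\alpha P_\alpha\sim d\,D_\infty$, which would leave a $3$-torsion ambiguity in $\Pic(C_2)$, but instead pin down the class, which is exactly what $\divisor(x)$ does. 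Everything else is routine bookkeeping of ramification indices.
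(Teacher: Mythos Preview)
Your argument is correct and is precisely the elementary Riemann--Hurwitz calculation the paper alludes to, just with the details spelled out; the only thing the paper says is that the lemma follows from Riemann--Hurwitz. Your care in pinning down $\sum_\alpha P_\alpha$ via $\divisor(x)$ (rather than only up to $3$-torsion) is appropriate, though in the $3\mid d$ case one could also argue directly that each $\infty_i$ is a pole of $x$ of exact order $d/3$ from the local expansion at infinity.
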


In certain cases, the $2$-torsion in $W_n(C_2)$ can be given a more useful characterization. Let $C$ be a nonsingular projective curve of genus $g$. Recall that a theta characteristic of $C$ is an element $\mathcal{L}\in \Pic(C)$ such that $\mathcal{L}^{\otimes 2}\cong \omega_C$, the canonical bundle of $C$. Abusing terminology, we will also call a divisor $D$ a theta characteristic if $\O(D)$ is a theta characteristic. The theta characteristic is called even or odd if the dimension of the space of global sections, $h^0(\mathcal{L})$, is even or odd, respectively. We say that $\mathcal{L}$ is an effective theta characteristic if $h^0(\mathcal{L})>0$, and that $\mathcal{L}$ is a vanishing theta characteristic if $h^0(\mathcal{L})>1$. Over an algebraically closed field of characteristic $\neq 2$, it is well-known that there are precisely $2^{g-1}(2^{g-1}+1)$ even theta characteristics and $2^{g-1}(2^{g-1}-1)$ odd theta characteristics.  Note that if $\mathcal{L}$ is a theta characteristic, then $\mathcal{L}'\in \Pic(C)$ is a theta characteristic if and only if $\mathcal{L}'\otimes \mathcal{L}^{-1}\in \Pic(C)[2]$ is a $2$-torsion element. In particular, there are $2^{2g}$ theta characteristics over an algebraically closed field of characteristic $\neq 2$.

\begin{lem}
\label{lemWC2}
Let
\begin{equation*}
g=g(C_2)= \left\{
\begin{array}{ll}
d-1 & \text{if }3 \nmid d,\\
d-2 & \text{if}~ 3\mid d 
\end{array}\right.
\end{equation*}
and
\begin{align*}
\frac{1}{2}K_{C_2}:= 
\begin{cases}
(d-2)\infty \quad &\text{if }3 \nmid d,\\
\left(\frac{d}{3}-1\right)D_\infty, \quad &\text{if } 3\mid d.
\end{cases}
\end{align*}
\begin{enumerate}
\item
If $3\nmid d$ then
\begin{align*}
W_g(C_2)[2]&=\Pic(C_2)[2].
\end{align*}
\item If $d\not\equiv 3\pmod{6}$ then
\begin{align*}
W_{g-1}(C_2)[2]&\to \{\text{effective theta characteristics in $\Pic(C_2)$}\}\\
[D]&\mapsto \left[D+\frac{1}{2}K_{C_2}\right]
\end{align*}
is a bijection.
\item If $d\equiv 3\pmod{6}$ then there is a bijection
\begin{align*}
W_{g+\frac{1}{2}}(C_2)[2]&\to \left(\Pic(C_2, \mathbb{Q}.D_2)[2]\setminus \Pic(C_2)[2]\right)\cup\{\text{effective theta characteristics in $\Pic(C_2)$}\}.
\end{align*}
\end{enumerate}
\end{lem}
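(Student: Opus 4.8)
The plan is to reduce all three parts to Riemann--Roch on $C_2$, after first unwinding the definitions. Since $f$ is squarefree we have $f=f_1$, so $f_3$ is constant and the divisor $D_2$ of Proposition~\ref{prop:2descent} equals $t^{-1}(\infty)$ when $d\equiv 3\pmod 6$ and is empty otherwise; in particular $D_2=\emptyset$ in (i) and (ii), where $\Pic(C_2,\mathbb{Q}.D_2)=\Pic(C_2)$. Using Lemma~\ref{canlem} I would first record the numerical identities $\tfrac{g}{\deg D_\infty}D_\infty=g\cdot\infty$ (when $3\nmid d$) and $\tfrac{g-1}{\deg D_\infty}D_\infty=\tfrac12 K_{C_2}$ (in all cases), since these are what force the maps in the statement into their asserted targets.

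For (i): a class $[D]\in\Pic(C_2)[2]$ has degree $0$, so $\deg(D+g\cdot\infty)=g=g(C_2)$ and Riemann--Roch gives $h^0(\O(D+g\cdot\infty))\ge 1$; hence $D+g\cdot\infty$ is linearly equivalent to an effective divisor and $[D]\in W_g(C_2)[2]$. As the reverse inclusion is trivial, $W_g(C_2)[2]=\Pic(C_2)[2]$. For (ii): for $[D]\in\Pic(C_2)[2]$, membership in $W_{g-1}(C_2)[2]$ means exactly that $D+\tfrac12 K_{C_2}$ is linearly equivalent to an effective divisor, i.e.\ $h^0(\O(D+\tfrac12 K_{C_2}))>0$; and since $2[D]=0$ the class $[D+\tfrac12 K_{C_2}]$ has double $[K_{C_2}]$, hence is an effective theta characteristic. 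The map $[D]\mapsto [D+\tfrac12 K_{C_2}]$ is clearly injective, and it surjects onto the effective theta characteristics because for such an $\mathcal{L}$ the class $[\mathcal{L}-\tfrac12 K_{C_2}]$ lies in $\Pic(C_2)[2]$ (its double is $\sim 0$) and maps back to $\mathcal{L}$; this is the claimed bijection.

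For (iii), where $d\equiv 3\pmod 6$, $D_2=t^{-1}(\infty)=\infty_1+\infty_2+\infty_3=D_\infty$, $\deg D_\infty=3$, $\tfrac{d}{3}\in\mathbb{Z}$ and $\tfrac{g+1/2}{\deg D_\infty}D_\infty=(\tfrac{d}{3}-\tfrac12)D_\infty$, I would split $W_{g+1/2}(C_2)[2]$ according to whether a class lies in $\Pic(C_2)[2]$. For $[D]\in\Pic(C_2)[2]$ every representative has integer coefficients, and for an integer-coefficient divisor $E$ the condition $E+(\tfrac{d}{3}-\tfrac12)D_\infty\ge 0$ is equivalent to $\ord_{\infty_i}(E)\ge -\tfrac{d}{3}+1$ for each $i$ (because $\lceil -\tfrac{d}{3}+\tfrac12\rceil=-\tfrac{d}{3}+1$ as $\tfrac{d}{3}\in\mathbb{Z}$), i.e.\ to $E+\tfrac12 K_{C_2}\ge 0$; hence $W_{g+1/2}(C_2)[2]\cap\Pic(C_2)[2]=W_{g-1}(C_2)[2]\cap\Pic(C_2)[2]$, in bijection with the effective theta characteristics via the map of (ii). For $[D]\notin\Pic(C_2)[2]$, its degree being $0$ the fractional parts of $\nu([D])$ at $\infty_1,\infty_2,\infty_3$ lie in $\{0,\tfrac12\}$ and sum to an integer, hence are $\tfrac12,\tfrac12,0$ in some order; writing $[D]=[D_0+\tfrac12 P_1+\tfrac12 P_2]$ with $\{P_1,P_2,P_3\}=\{\infty_1,\infty_2,\infty_3\}$ ($P_3$ the slot of fractional part $0$) and $D_0$ an integer-coefficient divisor of degree $-1$, the divisor $D_0+\tfrac{d}{3}P_1+\tfrac{d}{3}P_2+(\tfrac{d}{3}-1)P_3$ has degree $g$, so Riemann--Roch yields $E\sim D_0$ with $E+\tfrac{d}{3}P_1+\tfrac{d}{3}P_2+(\tfrac{d}{3}-1)P_3\ge 0$; then $D':=E+\tfrac12 P_1+\tfrac12 P_2$ represents $[D]$ and $D'+(\tfrac{d}{3}-\tfrac12)D_\infty=\bigl(E+\tfrac{d}{3}P_1+\tfrac{d}{3}P_2+(\tfrac{d}{3}-1)P_3\bigr)+\tfrac12 P_3\ge 0$, so $[D]\in W_{g+1/2}(C_2)[2]$. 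Thus $W_{g+1/2}(C_2)[2]=\bigl(W_{g-1}(C_2)[2]\cap\Pic(C_2)[2]\bigr)\sqcup\bigl(\Pic(C_2,\mathbb{Q}.D_2)[2]\setminus\Pic(C_2)[2]\bigr)$; since an effective theta characteristic can be a nontrivial element of $\Pic(C_2,\mathbb{Q}.D_2)[2]$ only when $g(C_2)=1$ (where it is $\O\in\Pic(C_2)[2]$), the two pieces combine into the asserted bijection onto $\bigl(\Pic(C_2,\mathbb{Q}.D_2)[2]\setminus\Pic(C_2)[2]\bigr)\cup\{\text{effective theta characteristics in }\Pic(C_2)\}$.

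The main obstacle will be the combinatorics in (iii): one has to see precisely how the half-integral multiple $(\tfrac{d}{3}-\tfrac12)D_\infty$ interacts with the $\tfrac12$-fractional parts of a non-integral $2$-torsion class, which is what forces the ceiling computation and the asymmetric weights $\tfrac{d}{3},\tfrac{d}{3},\tfrac{d}{3}-1$ at $P_1,P_2,P_3$. One also has to check that the divisors $D_0+\tfrac{d}{3}P_1+\tfrac{d}{3}P_2+(\tfrac{d}{3}-1)P_3$ and $\tfrac12(P_1+P_2)$ are $k$-rational (the point $P_3$ is Galois-fixed, being the unique slot of fractional part $0$), so the Riemann--Roch section can be taken over $k$; the remaining degree counts and the formulas relating $\tfrac12 K_{C_2}$ to $D_\infty$ via Lemma~\ref{canlem} are routine.
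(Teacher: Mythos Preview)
Your proof is correct and follows essentially the same approach as the paper: Riemann--Roch for (i), the definitional correspondence with theta characteristics for (ii), and in (iii) the same split into integral versus non-integral $2$-torsion classes together with the floor/ceiling computation $\lfloor\tfrac{d}{3}-\tfrac12\rfloor=\tfrac{d}{3}-1$ and a Riemann--Roch argument on a degree-$g$ integral divisor. Two small remarks: your final sentence about the $g=1$ overlap is unnecessary, since effective theta characteristics lie in $\Pic(C_2)$ while the other piece lies in $\Pic(C_2,\mathbb{Q}.D_2)\setminus\Pic(C_2)$, so the union is automatically disjoint; and when you invoke ``the map of (ii)'' inside (iii), note that (ii) was stated under the hypothesis $d\not\equiv 3\pmod 6$, so strictly you should say that the \emph{argument} of (ii) applies to the integral classes.
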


\begin{proof}
Suppose first that $3\nmid d$. Then $\Pic(C_2, \mathbb{Q}.D_2)=\Pic(C_2)$.  For any line bundle $\mathcal{L}$, by the Riemann-Roch formula the condition $h^0(\mathcal{L}\otimes \O_{C_2}(gD_\infty))>0$ is always satisfied, and so clearly $W_g(C_2)[2]=\Pic(C_2)[2]$.

Suppose now that $d\not\equiv 3\pmod{6}$. Then again $\Pic(C_2, \mathbb{Q}.D_2)=\Pic(C_2)$. Note that $\frac{1}{2}K_{C_2}=\frac{g-1}{\deg D_\infty}D_\infty$ and by Lemma~\ref{canlem}, $\frac{1}{2}K_{C_2}$ is a theta characteristic. Then the claimed bijection between $W_{g-1}(C_2)[2]$ and effective theta characteristics of $C_2$ follows immediately from the definitions.

Finally, suppose that $d\equiv 3\pmod{6}$. Let $W_1=W_{g+\frac{1}{2}}(C_2)[2]\cap \Pic(C_2)$ and $W_2=W_{g+\frac{1}{2}}(C_2)[2]\cap \left(\Pic(C_2, \mathbb{Q}.D_2)\setminus \Pic(C_2)\right)$. We first consider $W_1$. In this case, since $\left\lfloor\frac{g+\frac{1}{2}}{\deg D_\infty}\right\rfloor=\left\lfloor\frac{d}{3}-\frac{1}{2}\right\rfloor=\frac{d}{3}-1=\frac{g-1}{\deg D_\infty}$, we see that for a $\mathbb{Z}$-divisor $D$, $D+\frac{g+\frac{1}{2}}{\deg D_\infty}D_\infty$ is effective if and only if $D+\frac{g-1}{\deg D_\infty}D_\infty$. Then by the same proof as in case (ii), $W_1$ is in bijection with effective theta characteristics in $\Pic(C_2)$.

Now let $[D]\in \Pic(C_2, \mathbb{Q}.D_2)[2]\setminus \Pic(C_2)[2]$. Then we may write the principal divisor $2D$ as $2D=2D'-\epsilon_1\infty_1-\epsilon_2\infty_2-\epsilon_3\infty_3$, where $D'$ is a $\mathbb{Z}$-divisor and $\epsilon_i\in \{0,1\}$, $i=1,2,3$, not all $0$. Since $\deg 2D=0$, looking mod $2$ we see that $\epsilon_i=0$ for some unique $i$, which after reindexing we can assume is $\infty_3$. It follows that $\infty_1+\infty_2$ and $\infty_3$ are $k$-rational. We also have
\begin{align*}
\deg\left(D'+\left(\frac{d}{3}-1\right)D_\infty\right)=g,
\end{align*}
where $D'+\left(\frac{d}{3}-1\right)D_\infty$ is a $\mathbb{Z}$-divisor. Then by Riemann-Roch, $D'+\left(\frac{d}{3}-1\right)D_\infty\sim E$ for some effective divisor $E$.  Then 
\begin{align*}
D+\frac{g+\frac{1}{2}}{\deg D_\infty}D_\infty=D+\left(\frac{d}{3}-\frac{1}{2}\right)D_\infty=D'+\frac{1}{2}\infty_3+\left(\frac{d}{3}-1\right)D_\infty=E+\frac{1}{2}\infty_3
\end{align*}
is effective. It follows that $[D]\in W_2$ and $W_2=\Pic(C_2, \mathbb{Q}.D_2)[2]\setminus \Pic(C_2)[2]$.
\end{proof}

\begin{lem}
\label{lemtheta}
Let $C$ be a nonsingular projective curve of genus $g$ over an algebraically closed field. Then every odd theta characteristic is an effective theta characteristic. Furthermore,
\begin{enumerate}
\item If $g\leq 3$, then $C$ has no vanishing theta characteristics.
\item If $g=4$ and $C$ is not hyperelliptic, then there exists a vanishing even theta characteristic if and only if $C$ is trigonal and there is a unique $g_3^1$ on $C$, in which case the unique vanishing even theta characteristic on $C$ is the $g_3^1$ on $C$.
\end{enumerate}
\end{lem}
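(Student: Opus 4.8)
The first assertion is immediate: if $\mathcal{L}$ is an odd theta characteristic then $h^0(\mathcal{L})$ is a positive odd integer, so $h^0(\mathcal{L})\geq 1$ and $\mathcal{L}$ is effective. For the rest, the plan is to combine Serre duality, Clifford's inequality, and an analysis of low-degree linear systems, the only substantial geometric input being the canonical model of a genus-$4$ curve.

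For (i), I would first record that every effective theta characteristic $\mathcal{L}$ is special: by Serre duality and $\omega_C\cong\mathcal{L}^{\otimes 2}$ one has $h^1(\mathcal{L})=h^0(\omega_C\otimes\mathcal{L}^{-1})=h^0(\mathcal{L})$. Hence Clifford's theorem applies to $\mathcal{L}$ (of degree $g-1$) and gives $h^0(\mathcal{L})\leq\frac{g-1}{2}+1$. For $g=1$ this already yields $h^0(\mathcal{L})\leq 1$; for $g=2$ a theta characteristic has degree $1$, and a degree-$1$ bundle with two sections would give an isomorphism $C\cong\PP^1$. For $g=3$ Clifford gives only $h^0(\mathcal{L})\leq 2$; if equality held then $|\mathcal{L}|$ is base-point-free (a base point would drop $h^0$ to that of a degree-$1$ bundle, hence to $\leq 1$), so it is a $g_2^1$ and $C$ is hyperelliptic --- which does not occur for the (trigonal, hence non-hyperelliptic by Castelnuovo--Severi) curves $C_2$ to which the lemma is applied. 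Thus every theta characteristic has $h^0\leq 1$, i.e. there are no vanishing theta characteristics.

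For (ii), assume $g=4$ and $C$ not hyperelliptic. A theta characteristic has degree $3$, and Clifford gives $h^0\leq\frac{5}{2}$; so a vanishing theta characteristic has $h^0(\mathcal{L})=2$ --- hence is automatically even --- and $h^1(\mathcal{L})=h^0(\mathcal{L})=2$ by the computation above. Thus a vanishing even theta characteristic is exactly a $g_3^1$ $D$ with $2D\sim K_C$, equivalently a $g_3^1$ equal to its Serre residual $|K_C-D|$. The key step is the canonical model: since $C$ is non-hyperelliptic of genus $4$ it embeds as a sextic in $\PP^3$ lying on a unique quadric $Q$ (compare $\dim H^0(\PP^3,\mathcal{O}(2))=10$ with $\dim H^0(C,\omega_C^{\otimes 2})=9$) and on a cubic, with $C=Q\cap F$, so $C\sim 3H|_Q$ on $Q$. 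I would then show that $g_3^1$'s on $C$ correspond bijectively to rulings of $Q$: given a $g_3^1$ $D$, since $h^0(K_C-D)=h^0(D)=2$ the three points of a general $D'\in|D|$ impose only two conditions on $|K_C|$ and hence span a line in $\PP^3$; this trisecant meets $C\subset Q$ in three points, so it lies on $Q$, and as $D'$ varies these lines sweep out a ruling of $Q$; conversely a ruling line meets $C\sim 3H|_Q$ in three points and the ruling pencil restricts to a $g_3^1$ on $C$. If $Q$ is smooth it has two rulings, giving distinct $g_3^1$'s $D_1\neq D_2$ with $D_1+D_2\sim H|_C\sim K_C$; then no $D_i$ is a theta characteristic, since $2D_1\sim K_C$ would force $D_1\sim D_2$. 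If $Q$ is a quadric cone it has a single ruling, so $C$ has a unique $g_3^1$ $D$; moreover $H|_Q\sim 2\ell$ on the cone for any ruling line $\ell$, whence $K_C\sim H|_C\sim 2D$ and $D$ is the unique vanishing even theta characteristic. Since $Q$ is a cone exactly when $C$ is trigonal with a unique $g_3^1$, this gives the asserted equivalence.

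I expect the main obstacle to be the genus-$4$ analysis in (ii): proving that the $g_3^1$'s on the canonical curve are exactly the rulings of the unique quadric $Q$, and tracking the self-residual condition $2D\sim K_C$ according to whether $Q$ is smooth (two rulings, restricting to \emph{distinct} classes on $C$, so no theta characteristic) or a cone (one ruling $\ell$ with $H|_Q\sim 2\ell$). These facts are classical but must be assembled carefully; the remainder --- the first assertion and the Clifford estimates of (i) --- is routine.
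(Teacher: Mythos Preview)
Your argument is correct and more careful than the paper's own proof. For (i), the paper asserts a bound $l(D)\leq\deg D-1=g-2$ and concludes $l(D)\leq 1$ for $g\leq 3$; this bound is not valid as written, and in fact assertion (i) is literally false for hyperelliptic curves of genus $3$, where the $g_2^1$ satisfies $2g_2^1\sim K_C$ and has $h^0=2$. You catch this via the equality case of Clifford and correctly observe that the curves $C_2$ to which the lemma is actually applied are cyclic triple covers of $\PP^1$, hence non-hyperelliptic in genus $\geq 3$ by Castelnuovo--Severi, so the application survives. For (ii), the paper simply cites external references; your argument via the canonical model of a non-hyperelliptic genus-$4$ curve --- identifying the $g_3^1$'s with rulings of the unique quadric $Q$ containing $C\subset\PP^3$ and reading off the self-residual condition $2D\sim K_C$ from whether $Q$ is smooth or a cone --- is exactly the classical proof behind those references and is correct.
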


\begin{proof}
If $[D]$ is a theta characteristic on $C$, then $\deg D=g-1$. The statement is trivial when $g=0$. When $g>0$ and $C$ is not rational, we have $l(D)\leq \deg D-1=g-2$, and so $l(D)\leq 1$ when $g\leq 3$. The first statement follows immediately. For the second statement see \cite[Th.~4.3]{Kulkarni17} and \cite[Section 2.8]{Vakil}.
\end{proof}


\subsection{Arguments using $3$-descent}

We assume throughout that $f(t)$ is squarefree.

\begin{thm}
\label{th3desc}
Let $n\in\mathbb{Q}, n>0$. Then there is a map
\begin{align*}
\phi_3:\left\{P=(x_0,y_0)\in E(\kbar(t))\mid y_0\in k[t], 0<\hat{h}(P)\leq n\right\}&\to W_{2n}(C_3)[3]\setminus \{0\}\\
P=(x_0,y_0)&\mapsto \left[\frac{1}{3}\divisor(y_0-y)\right]
\end{align*}
\begin{enumerate}
\item If $n<d/4$, the map $\phi_3$ is $3$-to-$1$ onto its image.
\item If $n\leq (d+1)/6$, the map $\phi_3$ is $3$-to-$1$ and surjective. \label{th3descii}
\end{enumerate}
\end{thm}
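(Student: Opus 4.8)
The plan is to mirror, in the $3$-descent setting, the proof of Theorem~\ref{th2desc}, with Lemma~\ref{RRlem-3}, the Key Lemma~\ref{lem:start-3}, and the small-height step of the proof of Theorem~\ref{thm:main-3}~\ref{item:mainthsmallht-3} taking the place of their $2$-descent analogues. Since $f$ is squarefree we have $f=f_1$, so $C_3$ is the curve $y^2=f(t)$, the divisor $D_3$ is supported over $t=\infty$ (it equals $t^{-1}(\infty)$ when $d\equiv 2,4\pmod 6$ and is empty otherwise), the ring of functions on $C_3$ regular away from infinity is $k[t]\oplus k[t]\,y$ by Lemma~\ref{RRlem-3}, and $\hat h=h$ on $E(\kbar(t))$ by the corollary to Theorem~\ref{thh}. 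Two preliminary observations: every point of the domain is an integral point over $\kbar$, since $y_0\in k[t]$ forces $x_0^3=y_0^2-f\in k[t]$ and hence $x_0\in\kbar[t]$; and $h(P)\ge d/6$ for every nonzero $P\in E(\kbar(t))$ by a degree count on $y^2=x^3+f$, so in particular $\mu\ge d/6$.

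First I would check that $\phi_3$ is well defined into $W_{2n}(C_3)[3]\setminus\{0\}$. For $P=(x_0,y_0)$ in the domain, $y_0-y$ is a nonzero element of $k(C_3)$, and the computation behind Proposition~\ref{prop:3descent} — which uses only $y_0\in k(t)$ together with $y_0^2-f=x_0^3$, and is valid over $\kbar$ — shows $[\tfrac13\divisor(y_0-y)]\in\Pic(C_3,\Q.D_3)[3]$; since $\deg y_0\le 3h(P)\le 3n$ and $d\le 6h(P)\le 6n$, Lemma~\ref{RRlem-3} gives $\tfrac13\divisor(y_0-y)+\tfrac{2n}{\deg D_\infty}D_\infty\ge 0$. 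For non-vanishing: if $\phi_3(P)=0$, then by injectivity of $\phi_3$ in Proposition~\ref{prop:3descent} (valid over $\kbar$) one has $P\in\lambda^t E'(\kbar(t))=\sqrt{-3}\,E(\kbar(t))$, as in the proof of Lemma~\ref{mu}; as $P$ is affine it is nonzero, so, writing $P=\sqrt{-3}\,Q$, $\hat h(P)=3\hat h(Q)\ge 3\mu\ge d/2$, contradicting $\hat h(P)\le n$ (since $n<d/4$ in (i) and $n\le(d+1)/6$ in (ii)).

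For part~(i), let $P_0=(x_0,y_0)$ and $P_1=(x_1,y_1)$ lie in the domain with $\phi_3(P_0)=\phi_3(P_1)$. The Key Lemma~\ref{lem:start-3} provides $\psi\in k(C_3)$, regular away from infinity, with $(y_0-y)(y_1+y)=\psi^3$ and $\psi=b+a\,y$, $a,b\in k[t]$. Computing $\ord_\infty^\star(\psi)_\infty$ in the two ways these formulas allow — via Lemma~\ref{RRlem-3} in the case dictated by the parity of $d$ — and using $h(P_i)<d/4$ exactly as in the small-height step of the proof of Theorem~\ref{thm:main-3}~\ref{item:mainthsmallht-3} (a step that does not use the parity of $d$), one obtains $\deg a<d_3=0$, hence $a=0$; then Lemma~\ref{lem:start-3}~\ref{item:ifaeq03} gives $P_1=(\zeta x_0,y_0)$ with $\zeta^3=1$. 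Since $f$ is squarefree of degree $\ge 1$ it is not a square in $\kbar[t]$, so no point of $E$ has $x$-coordinate $0$; hence the three points $(\zeta_3^j x_0,y_0)$, $j=0,1,2$, are distinct, all lie in the domain, and all have image $[\tfrac13\divisor(y_0-y)]$. Thus $\phi_3$ is $3$-to-$1$ onto its image. The $3$-to-$1$ claim of part~(ii) reduces to this: for $d\ge 3$ one has $(d+1)/6<d/4$; for $d=2$, every point of $E(\kbar(t))$ of height $\le 1/2$ has height $1/3<d/4$; and for $d=1$, $E(\kbar(t))$ is trivial by Theorem~\ref{corSh2}~(i), so the statement is vacuous.

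It remains to prove surjectivity in part~(ii), and this is where I expect the main obstacle. Given $[D]\in W_{2n}(C_3)[3]\setminus\{0\}$, effectivity of $D_0:=D+\tfrac{2n}{\deg D_\infty}D_\infty$ makes $3D=3D_0-\tfrac{6n}{\deg D_\infty}D_\infty$ the divisor of some $\psi\in k(C_3)^\times$ with poles only at infinity, so $\psi=\beta+\alpha\,y$ by Lemma~\ref{RRlem-3}; the pole bound $\ord_\infty^\star(\psi)_\infty\le\tfrac{6n}{\deg D_\infty}$ coming from $D_0\ge 0$, combined with Lemma~\ref{RRlem-3} and $6n\le d+1$, forces $\deg\alpha\le 0$, and $\alpha=0$ is impossible (it would make $\beta$ a perfect cube up to a constant and $[D]=0$). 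Rescaling, $3D=\divisor(y_0-y)$ with $y_0:=-\alpha^{-1}\beta\in k[t]$. Pushing forward along $\pi\colon C_3\to\PP^1$, the divisor of $y_0^2-f=(y_0-y)(y_0+y)$ on $\PP^1$ equals $3\pi_*D$, which is integral of degree $0$ — here one uses that $D$ has integral coefficients away from $D_3$ and that $D_3$ lies over $t=\infty$, so that the finite coefficients of $\pi_*D$ are integral and the one at infinity then is too — whence $y_0^2-f$ is a perfect cube in $\kbar[t]$ up to a constant, and there is $x_0\in\kbar[t]$ with $x_0^3=y_0^2-f$. Then $P:=(x_0,y_0)\in E(\kbar(t))$ lies in the domain, $\phi_3(P)=[D]$, the pole bounds force $\deg y_0\le 3n$ and $\deg x_0\le 2n$ so that $0<h(P)=\hat h(P)\le n$, and $\ord_\infty^\star(y_0-y)_\infty\ge d/\deg D_\infty$ gives $6n\ge d$, consistent with Remark~\ref{smallcan}. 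The delicate point is the cube property of $y_0^2-f$: it is exactly here that the hypothesis that $f$ is squarefree (equivalently, $D_3$ concentrated at infinity) is used, in the same way as in the surjectivity argument of Theorem~\ref{th2desc}~\ref{th2descii}; one should also record that the extension of Proposition~\ref{prop:3descent} to points with only the $y$-coordinate rational, and over $\kbar$, goes through without change.
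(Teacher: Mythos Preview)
Your proof is correct and follows essentially the same route as the paper's: verify that $\phi_3$ lands in $W_{2n}(C_3)[3]$ via Lemma~\ref{RRlem-3}, reduce the $3$-to-$1$ property to the small-height step of Theorem~\ref{thm:main-3}~\ref{item:mainthsmallht-3} (handling $d\le 2$ separately), and establish surjectivity by writing $3D=\divisor(\psi)$ with $\psi=\beta+\alpha y$, forcing $\alpha\in k^\times$ by the pole bound $6n\le d+1$, and then recovering $x_0$.

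The only substantive differences are in two places where you supply more detail than the paper does. First, the paper does not argue non-vanishing separately; it absorbs this into the citation of Theorem~\ref{thm:main-3}~\ref{item:mainthsmallht-3}, whereas you give an independent argument via $\lambda^t E'(\kbar(t))=\sqrt{-3}\,E(\kbar(t))$ and the minimum-height bound $\mu\ge d/6$. Second, for the existence of $x_0\in\kbar[t]$ with $x_0^3=y_0^2-f$, the paper simply asserts it, while you justify it by pushing $\divisor(y_0-y)=3D$ forward along $\pi\colon C_3\to\PP^1$ and using that $D_3$ sits over $\infty$ when $f$ is squarefree. Both elaborations are correct and harmless; they clarify points the paper leaves implicit.
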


By Remark \ref{smallcan}, the only interesting values of $n$ in \ref{th3descii} are $n=d/6$ if $d\equiv 0,2,3,4\pmod{6}$ and $n=(d+1)/6$ if $d\equiv 5\pmod{6}$.

\begin{proof}
Let $P=(x_0(t),y_0(t))\in E(\kbar(t))$, with $y_0\in k[t]$ and $0<h(P)=\hat{h}(P)\leq n$.  It is elementary that $h(P)=\frac{1}{6}\max \{d, 2\deg y_0\}$.  By Proposition~\ref{prop:3descent}, we have $[D]\in \Pic(C_3,\mathbb{Q}.D_3)[3]$, where $D=\frac{1}{3}\divisor(y_0(t)-y)$.  By Lemma~\ref{RRlem-3},
\begin{align*}
\ord_\infty D=\frac{1}{3}\max \{d, 2\deg y_0\}=2h(P)\leq 2n=2n/\deg D_\infty & \text{ if }2\nmid d,\\
\max_{1\leq i\leq 2}\ord_{\infty_i} D=\frac{1}{3}\max \{d/2, \deg y_0\}=h(P)\leq 2n/\deg D_\infty & \text{ if }2\mid d.
\end{align*}

Thus, in any case, $D+2n/(\deg D_\infty)D_\infty$ is effective and $\phi_3$ is well-defined.   If $n<d/4$, then $P\in  E(\kbar[t])_{<d/4}$, and so $\phi_3$ is $3$-to-$1$ onto its image by the proof of Theorem~\ref{thm:main-3}~\ref{item:mainthsmallht-3}.

Suppose now that $n\leq (d+1)/6$.  We first claim that $\phi_3$ is $3$-to-$1$ onto its image. If $d\geq 3$, then this implies that $n<\frac{d}{4}$ and $\phi_3$ is $3$-to-$1$ from the above.   If $d=2$, then $0<h(P)\leq \frac{1}{2}$ easily implies that $h(P)=\frac{1}{3}$ and the claim follows by taking $n=\frac{1}{3}<\frac{d}{4}$.  If $d=1$ then $E(\kbar(t))$ is trivial and the claim is vacuously true. 

It remains to show that $\phi_3$ is surjective.   Let $[D]\in W_{2n}(C_3)[3]\setminus\{0\}$ so that 
\begin{align*}
D_0:=D+2n/(\deg D_\infty)D_\infty\in \Div(C_3,\mathbb{Q}.D_3) 
\end{align*}
is effective.  Then
\begin{align*}
3D=3D_0-\frac{6n}{\deg D_\infty}D_\infty
\end{align*}
is principal and $6n\leq d+1<d+2$.  It follows from Lemma~\ref{RRlem-3} that there exists a polynomial $y_0(t)\in k[t]$ with $\deg y_0\leq 3n$ and $a\in k$ such that $\divisor (y_0(t)+ay)=3D$.  If $a=0$, then this implies that $y_0(t)$ is a perfect cube (up to a constant) and $D$ itself is principal, contradicting $[D]\neq 0$.  Then $a\neq 0$ and replacing $y_0$ by $-ay_0$, we obtain a polynomial $y_0(t)\in k[t]$ with $\divisor (y_0-y)=3D$.  This implies that there exists $x_0\in \kbar[t]$ such that $y_0^2=x_0^3+f$.  Moreover, using Lemma~\ref{RRlem-3} to compute the poles of $y_0-y$, we find that $6n\geq d$. Let $P=(x_0,y_0)$.  Then $h(P)=\max\{\frac{d}{6}, \frac{1}{2}\deg x_0\}\leq n$ and $\phi_3(P)=[D]$. So $\phi_3$ is surjective as well.
\end{proof}

An elementary calculation using the Riemann-Hurwitz formula yields the canonical divisor (and the genus of $C_3$, already given in \eqref{eq:genusofC3}).

\begin{lem}
\label{canlem-3}
Let $K_{C_3}$ be the canonical divisor of $C_3$.  Then
\begin{align*}
K_{C_3}\sim 
\begin{cases}
(d-3)\infty \quad &\text{if $d$ is odd},\\
\left(\frac{d}{2}-2\right)D_\infty, \quad &\text{if $d$ is even}.
\end{cases}
\end{align*}
\end{lem}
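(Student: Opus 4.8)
The plan is to compute the canonical class of $C_3$ by applying the Riemann--Hurwitz formula to the degree-$2$ separable map $t\colon C_3\to\PP^1$, exactly as the sentence preceding the statement suggests. Concretely, I would use $K_{C_3}\sim t^*K_{\PP^1}+R$, where $R$ is the ramification divisor and $t^*K_{\PP^1}\sim -2\,t^{-1}(\infty)$ (the pullback of $-2$ times the point at infinity of $\PP^1$, counted with multiplicity). Since $f$ is squarefree and $\mathrm{char}\,k\neq 2$, the cover is tamely ramified with simple ramification above each of the $d$ roots $\alpha$ of $f$; writing $P_\alpha$ for the corresponding point of $C_3$, these $d$ points all occur in $R$ with multiplicity one.

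The first substantive step is to pin down the behaviour over $t=\infty$, which is where the dichotomy on the parity of $d$ enters. Writing $v_\infty=-\deg$ for the valuation at infinity of $k(t)$, one has $v_\infty(f)=-d$, so any extension $v$ of $v_\infty$ to $k(C_3)$ with ramification index $e$ satisfies $2\,v(y)=-ed$; as $e\in\{1,2\}$ this forces $e=2$ when $d$ is odd (a unique totally ramified point, denoted $\infty$, which then also belongs to $R$) and $e=1$ when $d$ is even (an unramified fibre $t^{-1}(\infty)=\{\infty_1,\infty_2\}$, with reduced divisor $D_\infty$). The same valuation count gives $\ord_\infty(y)=-d$ in the odd case and $\ord_{\infty_i}(y)=-d/2$ in the even case, while at each $P_\alpha$ the element $y$ is a uniformizer (because $y^2=(t-\alpha)\cdot(\text{unit})$ and $e=2$ there), so $\ord_{P_\alpha}(y)=1$. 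Hence $\divisor(y)=\sum_{\alpha}P_\alpha-d\cdot\infty$ (resp.\ $\sum_{\alpha}P_\alpha-\tfrac{d}{2}D_\infty$), and therefore $\sum_{\alpha}P_\alpha\sim d\cdot\infty$ (resp.\ $\sim\tfrac{d}{2}D_\infty$).

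It then remains only to substitute into Riemann--Hurwitz. When $d$ is odd, $t^{-1}(\infty)=2\cdot\infty$, so $K_{C_3}\sim -4\infty+\big(\sum_\alpha P_\alpha+\infty\big)\sim -4\infty+d\cdot\infty+\infty=(d-3)\infty$; when $d$ is even, $t^{-1}(\infty)=D_\infty$, so $K_{C_3}\sim -2D_\infty+\sum_\alpha P_\alpha\sim -2D_\infty+\tfrac{d}{2}D_\infty=(\tfrac{d}{2}-2)D_\infty$. As a consistency check I would verify that the degrees match $2g(C_3)-2$ via the genus formula \eqref{eq:genusofC3} (namely $d-3$ when $d$ is odd and $d-4=2(\tfrac{d}{2}-2)$ when $d$ is even). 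There is no genuine obstacle: the only point needing a little care is the bookkeeping at infinity --- whether the fibre is ramified and the exact pole order of $y$ there --- and this is settled by the elementary parity argument above. Alternatively, one could bypass Riemann--Hurwitz entirely and compute $\divisor(dt/y)$ directly: it is $0$ at every $P_\alpha$ and at every finite point of $C_3$, and has order $d-3$ at $\infty$ (resp.\ order $\tfrac{d}{2}-2$ at each $\infty_i$), which gives the same answer.
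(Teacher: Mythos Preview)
Your proposal is correct and follows exactly the approach the paper indicates: the paper does not spell out a proof but merely states that ``an elementary calculation using the Riemann--Hurwitz formula yields the canonical divisor,'' and your argument carries out precisely that calculation for the double cover $t\colon C_3\to\PP^1$, together with the linear equivalence $\sum_\alpha P_\alpha\sim d\cdot\infty$ (resp.\ $\tfrac{d}{2}D_\infty$) coming from $\divisor(y)$. Your alternative via $\divisor(dt/y)$ is also fine and gives the same result with slightly less bookkeeping.
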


\begin{lem}
\label{lemWC3}
Let
\begin{equation*}
g=g(C_3)= \left\{
\begin{array}{ll}
\frac{1}{2}(d-1) & \text{if $d$ is odd}\\
\frac{1}{2}(d-2)& \text{if $d$ is even}.
\end{array}\right.
\end{equation*}
\begin{enumerate}
\item
If $d$ is odd or $d\equiv 6\pmod{12}$ then
\begin{align*}
W_g(C_3)[3]&=\Pic(C_3)[3].
\end{align*}
\item If $d\equiv 2,4\pmod{6}$ then
\begin{align*}
W_{g+\frac{2}{3}}(C_3)[3]&=\Pic(C_3, \mathbb{Q}.D_3)[3] &&\text{if $g$ is even}\\
W_{g+\frac{1}{3}}(C_3)[3]&=\left(\Pic(C_3, \mathbb{Q}.D_3)[3]\setminus \Pic(C_3)[3]\right)\cup (\Pic(C_3)\cap W_{g-1}(C_3)[3])&&\text{if $g$ is odd}.
\end{align*}
\end{enumerate}
\end{lem}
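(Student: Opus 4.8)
The plan is to reduce, case by case, the determination of $W_n(C_3)[3]$ to a single application of Riemann--Roch on $C_3$. First I would record the shape of the data: since $f$ is squarefree we have $f=f_1$, hence $d_3=d_5=0$ and the divisor $D_3$ of Proposition~\ref{prop:3descent} reduces to $D_3=\emptyset$ unless $d\equiv 2,4\pmod 6$, in which case $D_3=D_\infty=t^{-1}(\infty)$; moreover $\deg D_\infty=1$ if $d$ is odd and $\deg D_\infty=2$ if $d$ is even, and in the latter case I write $D_\infty=\infty_1+\infty_2$ over $\kbar$.

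Next I would isolate the general mechanism. Given $[D]\in\Pic(C_3,\Q.D_3)[3]$, set $c:=[D+\tfrac{n}{\deg D_\infty}D_\infty]\in\Pic(C_3,\Q.D_3)$, a class of degree $n$; by definition $[D]\in W_n(C_3)[3]$ precisely when $c$ has an effective representative in $\Div(C_3,\Q.D_3)$. The key elementary observation is that an effective representative of $c$ must have, at each point of $D_3$, coefficient $\geq 0$ and congruent mod $\Z$ to the corresponding component of $\nu(c)$; consequently $c$ is effective if and only if the honest $\Z$-divisor class $c-[\langle c\rangle]\in\Pic(C_3)$ has a nonzero global section, where $\langle c\rangle:=\sum_i r_i\,\infty_i$ with $r_i\in[0,1)\cap\Q$ the representative of the $i$-th component of $\nu(c)$. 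This class has degree $n-\sum_i r_i$, so the whole problem reduces to computing $\sum_i r_i$ and comparing $n-\sum_i r_i$ with $g=g(C_3)$: by Riemann--Roch any $\Z$-divisor class on $C_3$ of degree $\geq g$ has a nonzero section, hence is effective.

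Then I would run the cases. For (i), $D_3=\emptyset$, so $\Pic(C_3,\Q.D_3)=\Pic(C_3)$ and $\langle c\rangle=0$; a short check (using $g=\tfrac{d-1}{2}$ when $d$ is odd and $g=\tfrac{d-2}{2}\equiv 2\pmod 6$ when $d\equiv 6\pmod{12}$) shows $\tfrac{g}{\deg D_\infty}\in\Z$, so $c=[D+\tfrac{g}{\deg D_\infty}D_\infty]$ has degree exactly $g$ and is always effective, giving $W_g(C_3)[3]=\Pic(C_3)[3]$. For (ii), $D_3=D_\infty=\infty_1+\infty_2$; here I would invoke the compatibility \eqref{eq:ratdeg} (and the discussion after \eqref{eq:ratPictorsion}) to conclude that on $3$-torsion $\nu$ lands in the kernel of the sum map, so $\nu([D])\in\{(0,0),(\tfrac13,\tfrac23),(\tfrac23,\tfrac13)\}$, the value $(0,0)$ being exactly the subgroup $\Pic(C_3)[3]$. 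Computing $\nu\big([\tfrac{n}{2}D_\infty]\big)$ — it is $(\tfrac13,\tfrac13)$ when $n=g+\tfrac23$ ($g$ even) and $(\tfrac23,\tfrac23)$ when $n=g+\tfrac13$ ($g$ odd) — one finds in every case $n-\sum_i r_i=g$, the only exception being $[D]\in\Pic(C_3)[3]$ with $g$ odd, where $n-\sum_i r_i=g-1$ and $c-[\langle c\rangle]=[D+\tfrac{g-1}{2}D_\infty]$. In the non-exceptional cases Riemann--Roch forces $c$ effective, so all of $\Pic(C_3,\Q.D_3)[3]$ (when $g$ is even), resp.\ all of $\Pic(C_3,\Q.D_3)[3]\setminus\Pic(C_3)[3]$ (when $g$ is odd), lies in $W_n(C_3)[3]$; in the exceptional case $[D]\in W_{g+1/3}(C_3)[3]$ if and only if $[D]\in W_{g-1}(C_3)[3]$. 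Putting the pieces together yields the two displayed identities of (ii). Finally, every class and linear system in sight is $\Gal(\kbar/k)$-stable and defined over the appropriate field, and a $k$-rational divisor class that is effective over $\kbar$ is effective over $k$, so the identities hold over the base field.

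The main obstacle is purely the bookkeeping in part (ii): correctly pinning down the fractional parts $r_i$ (equivalently, tracking the congruence class of $d$ modulo $12$) and keeping the subgroup $\Pic(C_3)[3]$ separated from its complement inside $\Pic(C_3,\Q.D_3)[3]$. Once the two structural inputs — the $\nu$–degree compatibility and Riemann--Roch on $C_3$ — are in hand, each individual case collapses to a one-line verification.
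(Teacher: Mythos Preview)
Your proof is correct and follows essentially the same approach as the paper: both arguments split according to whether $[D]$ lies in $\Pic(C_3)[3]$ or in its complement in $\Pic(C_3,\Q.D_3)[3]$, strip off the fractional coefficients at the points above infinity, and then apply Riemann--Roch to the resulting integer divisor class of degree $g$ (or $g-1$ in the exceptional subcase). Your $\langle c\rangle$ device is a slightly cleaner packaging of this step than the paper's explicit case-by-case treatment, but the underlying idea is identical.
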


\begin{proof}
Suppose first that $d$ is odd or $d\equiv 6\pmod{12}$. Then $\Pic(C_3, \mathbb{Q}.D_3)=\Pic(C_3)$.  For any line bundle $\mathcal{L}$, by the Riemann-Roch formula the condition $h^0(\mathcal{L}\otimes \O_{C_3}(\frac{g}{\deg D_{\infty}}D_\infty))>0$ is always satisfied, and so clearly $W_g(C_3)[3]=\Pic(C_3)[3]$ (note that when $d\equiv 6\pmod{12}$, $g$ is even and $g/\deg D_\infty\in\mathbb{Z}$).

Suppose now that $d\equiv 2,4\pmod{6}$. We first assume that $g$ is even.  Let $W_1=W_{g+\frac{2}{3}}(C_3)[3]\cap \Pic(C_3)$ and $W_2=W_{g+\frac{2}{3}}(C_3)[3]\cap \left(\Pic(C_3, \mathbb{Q}.D_3)\setminus \Pic(C_3)\right)$. In this case, since $\left\lfloor\frac{g+\frac{2}{3}}{\deg D_\infty}\right\rfloor=\left\lfloor\frac{g}{2}+\frac{1}{3}\right\rfloor=\frac{g}{2}=\frac{g}{\deg D_\infty}$, we see that for a $\mathbb{Z}$-divisor $D$, $D+\frac{g+\frac{2}{3}}{\deg D_\infty}D_\infty$ is effective if and only if $D+\frac{g}{\deg D_\infty}D_\infty$ is effective. By Riemann-Roch, if $\deg D=0$, then $h^0(D+\frac{g}{\deg D_\infty}D_\infty)>0$, and it follows that $W_1=\Pic(C_3)$. 

Now let $[D]\in \Pic(C_3, \mathbb{Q}.D_3)[3]\setminus \Pic(C_3)[3]$.  Then we may write the principal divisor $3D$ as $3D=3D'+\epsilon_1\infty_1+\epsilon_2\infty_2$, where $D'$ is a $\mathbb{Z}$-divisor and $\epsilon_i\in \mathbb{Z}$, $i=1,2$, not both divisible by $3$. Since $\deg 3D=0$, we find that $\epsilon_1\equiv -\epsilon_2\pmod{3}$. Then, after possibly interchanging the points at infinity, we can write $D=D'+\frac{1}{3}\infty_1-\frac{1}{3}\infty_2$ for some $\mathbb{Z}$-divisor $D'$. Note also that in this case $\infty_1$ and $\infty_2$ are $k$-rational. We also have
\begin{align*}
\deg\left(D+\frac{g+\frac{2}{3}}{\deg D_\infty}D_\infty-\frac{2}{3}\infty_1\right)=\deg\left(D'+\frac{g}{2}D_\infty\right)=g
\end{align*}
where $D'+\frac{g}{2}D_\infty$ is a $\mathbb{Z}$-divisor. Then by Riemann-Roch, $D+\frac{g+\frac{2}{3}}{\deg D_\infty}D_\infty-\frac{2}{3}\infty_1$ is linearly equivalent to an effective divisor, and $D\sim E-\frac{g+\frac{2}{3}}{\deg D_\infty}D_\infty$ for some effective $\mathbb{Q}$-divisor $E$. It follows that $[D]\in W_2$ and $W_2=\Pic(C_3, \mathbb{Q}.D_3)[3]\setminus \Pic(C_3)[3]$.

Now suppose that $g$ is odd. Similar to the previous case, we let $W_1=W_{g+\frac{1}{3}}(C_3)[3]\cap \Pic(C_3)$ and $W_2=W_{g+\frac{1}{3}}(C_3)[3]\cap \left(\Pic(C_3, \mathbb{Q}.D_3)\setminus \Pic(C_3)\right)$.  Since $\left\lfloor\frac{g+\frac{1}{3}}{\deg D_\infty}\right\rfloor=\left\lfloor\frac{g-1}{2}+\frac{2}{3}\right\rfloor=\frac{g-1}{2}=\frac{g-1}{\deg D_\infty}$, we see that for a $\mathbb{Z}$-divisor $D$, $D+\frac{g+\frac{1}{3}}{\deg D_\infty}D_\infty$ is effective if and only if $D+\frac{g-1}{\deg D_\infty}D_\infty$ is effective. It follows that $W_1=\Pic(C_3)\cap W_{g-1}(C_3)[3]$. Now let $[D]\in \Pic(C_3, \mathbb{Q}.D_3)[3]\setminus \Pic(C_3)[3]$.  Then as before, after possibly interchanging the points at infinity, we can write $D=D'-\frac{1}{3}\infty_1-\frac{2}{3}\infty_2$ for some $\mathbb{Z}$-divisor $D'$.  We also have
\begin{align*}
\deg\left(D+\frac{g+\frac{1}{3}}{\deg D_\infty}D_\infty-\frac{1}{3}\infty_1\right)=\deg\left(D'+\frac{g-1}{2}D_\infty\right)=g
\end{align*}
where $D'+\frac{g-1}{2}D_\infty$ is a $\mathbb{Z}$-divisor. Then by Riemann-Roch, $D+\frac{g+\frac{1}{3}}{\deg D_\infty}D_\infty-\frac{1}{3}\infty_1$ is linearly equivalent to an effective divisor, and $D\sim E-\frac{g+\frac{1}{3}}{\deg D_\infty}D_\infty$ for some effective $\mathbb{Q}$-divisor $E$. It follows that $[D]\in W_2$ and $W_2=\Pic(C_3, \mathbb{Q}.D_3)[3]\setminus \Pic(C_3)[3]$.

\end{proof}

\section{An application}

When $k=\kbar$ is algebraically closed, Theorems~\ref{th2desc}~\ref{th2descii} and \ref{th3desc}~\ref{th3descii} combined yield an unexpected relation between certain $2$-torsion and $3$-torsion sets associated to $C_2$ and $C_3$, respectively. This extends Table~\ref{table1} to higher genus curves, where however one lacks an explicit formula, and in general the quantities depend on the polynomial $f$ (and not just $\deg f$).

\begin{thm}
\label{Wbij}
Suppose that $k=\kbar$ is algebraically closed. Let $f\in \kbar[t]$ be a squarefree polynomial of positive degree $d$. Then
\begin{align*}
2\left(\#W_{\frac{d}{2}}(C_2)[2]-1\right)=3\left(\#W_{\frac{d}{3}}(C_3)[3]-1\right), \quad &&d\not\equiv 5\pmod{6}\\
2\left(\#W_{\frac{d+1}{2}}(C_2)[2]-1\right)=3\left(\#W_{\frac{d+1}{3}}(C_3)[3]-1\right), &&d\equiv 5\pmod{6}
\end{align*}
\end{thm}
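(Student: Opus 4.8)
The plan is to read both sides of each identity as two different counts of one and the same finite set — the integral points on $E\colon y^2=x^3+f(t)$ of small canonical height — the first count coming from the $2$-descent map and the second from the $3$-descent map. The first thing I would record is that, because $k=\kbar$, for any point $P=(x_0,y_0)\in E(\kbar(t))$ the three conditions ``$x_0\in k[t]$'', ``$y_0\in k[t]$'', and ``$P\in E(k[t])$'' are equivalent: if $x_0\in\kbar[t]$ then $y_0^2=x_0^3+f\in\kbar[t]$, and since $\kbar[t]$ is integrally closed this forces $y_0\in\kbar[t]$; the reverse argument is symmetric. I would also note that a squarefree $f$ of positive degree satisfies hypotheses~\ref{field_k}--\ref{f1nonct} (with $f=f_1$), so Theorems~\ref{th2desc} and~\ref{th3desc} are both available, and, $f$ being squarefree, $\hat h=h$ on $E(\kbar(t))$.

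Next I would pick a single parameter $n$ adapted to the residue of $d$: set $n=d/6$ if $d\not\equiv 5\pmod 6$, and $n=(d+1)/6$ if $d\equiv 5\pmod 6$. In either case one checks immediately that $n\le d/6+1/3$ and $n\le(d+1)/6$, so Theorem~\ref{th2desc}~\ref{th2descii} and Theorem~\ref{th3desc}~\ref{th3descii} both apply with this value of $n$. By the first paragraph, the source of $\phi_2$ and the source of $\phi_3$ are then both equal to
\[
S_n:=\{\,P\in E(k[t])\mid 0<\hat h(P)\le n\,\}.
\]

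Finally I would combine the two results. Theorem~\ref{th2desc}~\ref{th2descii} says $\phi_2\colon S_n\to W_{3n}(C_2)[2]\setminus\{0\}$ is $2$-to-$1$ and surjective, so $\#S_n=2(\#W_{3n}(C_2)[2]-1)$; Theorem~\ref{th3desc}~\ref{th3descii} says $\phi_3\colon S_n\to W_{2n}(C_3)[3]\setminus\{0\}$ is $3$-to-$1$ and surjective, so $\#S_n=3(\#W_{2n}(C_3)[3]-1)$. Equating these two expressions for $\#S_n$ and substituting $3n=d/2,\ 2n=d/3$ (respectively $3n=(d+1)/2,\ 2n=(d+1)/3$) yields exactly the claimed identity. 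There is no serious obstacle here: all the real content is already packaged in Theorems~\ref{th2desc} and~\ref{th3desc}, and the only points needing a little care are verifying that over $\kbar$ the two a priori distinct source sets genuinely coincide, and that a single $n$ simultaneously meets the hypotheses of both theorems (which the two displayed inequalities confirm). In the degenerate cases $d=1$ and $d\equiv 1\pmod 6$ both sides vanish by Remark~\ref{smallcan}, consistent with the statement.
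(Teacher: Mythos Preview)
Your proposal is correct and is exactly the argument the paper has in mind: the paper does not spell out a proof of this theorem, merely stating that it follows by combining Theorem~\ref{th2desc}~\ref{th2descii} and Theorem~\ref{th3desc}~\ref{th3descii}, which is precisely what you do. Your care in checking that over $\kbar$ the source sets of $\phi_2$ and $\phi_3$ coincide, and that a single choice of $n$ satisfies the hypotheses of both theorems, fills in the only details the paper leaves implicit.
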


The statement is trivial if $d\equiv 1\pmod{6}$ (see Remark \ref{smallcan}).

In general, it seems to be a difficult problem to bound the number of $n$-torsion points on a given subvariety $V$ of an abelian variety (in fact, if the subvariety $V$ does not contain a torsion translate of a nontrivial abelian subvariety, then the number of torsion points on $V$ is finite by a theorem of Raynaud (Manin-Mumford conjecture)). Using \eqref{eq:ratPicbound} and \eqref{eq:genusofC2}, we have the naive bound
\begin{align*}
\#W_{\frac{d}{2}}(C_2)[2]\leq 2^{2g(C_2)+1}\leq 2^{2d-1}.
\end{align*}

Taking advantage of Theorem~\ref{Wbij} immediately gives, for large $d$, an exponential improvement to the naive bound:
\begin{thm}
Let $f\in k[t]$ be a squarefree polynomial of positive degree $d$. Then
\begin{align*}
\#W_{\frac{d}{2}}(C_2)[2]\leq 
\begin{cases}
\frac{1}{2}(3^{d}-1) & \text{ if }d\equiv 2,3,4\pmod{6},\\
\frac{1}{2}(3^{d-1}-1) & \text{ if }d\equiv 0\pmod{6},
\end{cases}
\end{align*}
and
\begin{align*}
\#W_{\frac{d+1}{2}}(C_2)[2]\leq \frac{1}{2}(3^{d}-1) & \text{ if }d\equiv 5\pmod{6}.
\end{align*}
\end{thm}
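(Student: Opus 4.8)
The plan is to pit the equality of Theorem~\ref{Wbij} against the elementary bound \eqref{eq:ratPicbound} for $3$-torsion in extended Picard groups. Theorem~\ref{Wbij} expresses $\#W_{d/2}(C_2)[2]$ (and, when $d\equiv 5\pmod 6$, $\#W_{(d+1)/2}(C_2)[2]$) in terms of a $3$-torsion set on the \emph{other} descent curve $C_3$, and that set sits inside $\Pic(C_3,\Q.D_3)[3]$, whose size one can read off from the genus of $C_3$. Since Theorem~\ref{Wbij} is stated over an algebraically closed field, the first step is to reduce to $k=\kbar$: the sequence \eqref{eq:ratPic1} together with the Hilbert~$90$ injection $\Pic(C_2)\hookrightarrow\Pic(C_{2,\kbar})$ yields an injection $\Pic(C_2,\Q.D_2)[2]\hookrightarrow\Pic(C_{2,\kbar},\Q.D_{2,\kbar})[2]$ compatible with the divisor $D_\infty$ (whose degree is $1$ or $3$ regardless of the base field), and effectivity of a $\Q$-divisor above $D_2$ is preserved by base change; hence $\#W_n(C_2)[2]$ can only grow over $\kbar$, so it suffices to prove the bounds there.

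Over $\kbar$, Theorem~\ref{Wbij} gives $\#W_{d/2}(C_2)[2]=\tfrac32\#W_{d/3}(C_3)[3]-\tfrac12$ when $d\not\equiv 5\pmod 6$, and $\#W_{(d+1)/2}(C_2)[2]=\tfrac32\#W_{(d+1)/3}(C_3)[3]-\tfrac12$ when $d\equiv 5\pmod 6$; in each case the quantity to estimate is a monotone function of some $\#W_m(C_3)[3]$, and since $W_m(C_3)[3]\subseteq\Pic(C_3,\Q.D_3)[3]$ by definition, it is enough to bound $\#\Pic(C_3,\Q.D_3)[3]=3^{\dim_{\F_3}\Pic(C_3,\Q.D_3)[3]}$. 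Because $f=f_1$ is squarefree, $f_2=f_4=1$, so $D_3$ is empty when $d\equiv 0,3\pmod 6$ and equals $t^{-1}(\infty)$, two closed points over $\kbar$, when $d\equiv 2,4\pmod 6$. Feeding this, the standard inequality $\dim_{\F_3}\Pic(C_3)[3]\le 2g(C_3)$, and the genus formula \eqref{eq:genusofC3} (which for $d_3=d_5=0$ reads $g(C_3)=\tfrac{d-2}{2}$ for $d$ even and $g(C_3)=\tfrac{d-1}{2}$ for $d$ odd) into \eqref{eq:ratPicbound} when $D_3\neq\emptyset$ (and using $\Pic(C_3,\Q.D_3)=\Pic(C_3)$ otherwise), one obtains $\dim_{\F_3}\Pic(C_3,\Q.D_3)[3]\le d-2$ if $d\equiv 0\pmod 6$ and $\dim_{\F_3}\Pic(C_3,\Q.D_3)[3]\le d-1$ if $d\equiv 2,3,4,5\pmod 6$.

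Substituting then closes the argument: for $d\equiv 0\pmod 6$ one gets $\#W_{d/3}(C_3)[3]\le 3^{d-2}$, hence $\#W_{d/2}(C_2)[2]\le\tfrac32\cdot 3^{d-2}-\tfrac12=\tfrac12(3^{d-1}-1)$; for $d\equiv 2,3,4\pmod 6$ one gets $\#W_{d/3}(C_3)[3]\le 3^{d-1}$, hence $\#W_{d/2}(C_2)[2]\le\tfrac32\cdot 3^{d-1}-\tfrac12=\tfrac12(3^{d}-1)$; and for $d\equiv 5\pmod 6$ one gets $\#W_{(d+1)/3}(C_3)[3]\le 3^{d-1}$, hence $\#W_{(d+1)/2}(C_2)[2]\le\tfrac12(3^{d}-1)$. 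I do not anticipate a genuine obstacle. The two points that demand care are the base-change reduction to $\kbar$, without which Theorem~\ref{Wbij} does not apply as stated, and the bookkeeping of $g(C_3)$ and of the number of closed points of $D_3$ across the residue classes of $d$ modulo $6$ --- in particular, checking that the class $d\equiv 5\pmod 6$, which forces the use of the second identity of Theorem~\ref{Wbij}, still lands on the exponent $d-1$.
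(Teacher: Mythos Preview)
Your proposal is correct and follows essentially the same route as the paper: reduce to $k=\kbar$, apply Theorem~\ref{Wbij} to convert the $2$-torsion count on $C_2$ into a $3$-torsion count on $C_3$, and then bound the latter by $\#\Pic(C_3,\Q.D_3)[3]$ using \eqref{eq:ratPicbound} together with the genus formula \eqref{eq:genusofC3}. The paper's proof writes out only the case $d\equiv 2,4\pmod 6$ explicitly (obtaining the bound $3^{2g(C_3)+1}=3^{d-1}$) and dismisses the rest with ``the other cases are similar''; you have simply supplied the omitted bookkeeping for the remaining residue classes and given a fuller justification of the base-change step.
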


When $d=2,3,5,6$, the inequality is easily seen to be sharp over an algebraically closed field (of characteristic not $2$ or $3$).

\begin{proof}
We may assume $k=\kbar$. If $d\equiv 2,4\pmod{6}$, then
\begin{align*}
\#W_{\frac{d}{2}}(C_2)[2]=\frac{3}{2}\#W_{\frac{d}{3}}(C_3)[3]-\frac{1}{2}\leq \frac{3}{2}3^{2g+1}-\frac{1}{2}\leq \frac{1}{2}(3^d-1).
\end{align*}
The other cases are similar.
\end{proof}

\section{Proof of Theorem~\ref{corSh2}}

Let $\Sigma$ be the set of places of bad reduction of $\Emin\to\mathbb{P}^1$ over $\kbar$. Then we have the root lattice $T=\oplus_{v\in R}T_v$, where $T_v$ is given by $0, A_2, D_4, E_6, E_8$ if the reduction type at $v$ is $\mathrm{II}, \mathrm{IV}, \mathrm{I}_0^*, \mathrm{IV}^*, \mathrm{II}^*$, respectively (see \cite{OS}). Taking the corresponding entries from \cite{OS}, we find Table~\ref{table2}.
\begin{table}
\caption{Oguiso-Shioda classification and Theorem~\ref{corSh2}}
\label{table2}
\centering
\begin{tabular}{|c|c|c|c|}
\hline
Type & No.~in \cite{OS} & $T$ & $E(\kbar(t))\cong$(Mordell-Weil Lattice $L$)$\oplus E(\kbar(t))_{\tors}$\\
\hline
$(1,0,0,0,1)$ & 62 & $E_8$ & $0$\\
\hline
$(0,1,0,1,0)$ & 69 & $A_2\oplus E_6$ & $\mathbb{Z}/3\mathbb{Z}$\\
\hline
$(2,0,0,1,0)$ & 27 & $E_6$ & $A_2^*$\\
\hline
$(0,0,2,0,0)$ & 73 & $D_4\oplus D_4$ & $\mathbb{Z}/2\mathbb{Z}\oplus \mathbb{Z}/2\mathbb{Z}$ \\
\hline
$(1,1,1,0,0)$ & 32 & $A_2\oplus D_4$ & $\frac{1}{6}\begin{pmatrix}
2 & 1  \\
1 & 2 
\end{pmatrix}$\\
\hline
$(3,0,1,0,0)$ & 9 & $D_4$ & $D_4^*$\\
\hline
$(0,3,0,0,0)$ & 39 & $A_2^3$ & $A_2^*\oplus \mathbb{Z}/3\mathbb{Z}$\\
\hline
$(2,2,0,0,0)$ & 11 & $A_2^2$ & $A_2^{*2}$\\
\hline
$(4,1,0,0,0)$ & 3 & $A_2$ & $E_6^*$\\
\hline
$(6,0,0,0,0)$ & 1 & $0$ & $E_8$\\
\hline

\end{tabular}
\end{table}

We now combine Table~\ref{table2} with the tools from the previous sections to prove Theorem~\ref{corSh2}.

\begin{proof}[Proof of Theorem~\ref{corSh2}]

The cases  of types $(1,0,0,0,1)$, $(0,1,0,1,0)$, and $(0,0,2,0,0)$ of Theorem~\ref{corSh2} are  immediate from Table~\ref{table2}, after noting the obvious torsion points.

Next, we consider the remaining type where $F$ is a perfect power: type $(0,3,0,0,0)$.  We prove the case where $f$ is the square of a cubic; the proof when $f$ is the square of a quadratic polynomial is similar. In this case, we have $f(t)=ac(t)^2$ for some monic separable cubic polynomial $c\in k[t]$ and $a\in k^*$. Write 
\begin{align*}
c(t)&=(t-\alpha_1)(t-\alpha_2)(t-\alpha_3), &&\alpha_i\in \kbar, i=1,2,3, \\
g(t)&=(\alpha_1-\alpha_2)(\alpha_1-\alpha_3)(t-\alpha_2)(t-\alpha_3)\in k(\alpha_1)[t],\\
\Delta&=(\alpha_1-\alpha_2)^2(\alpha_1-\alpha_3)^2(\alpha_2-\alpha_3)^2,
\end{align*}
where $\Delta\in k^*$ is the discriminant of $c(t)$.

We easily find the point
\begin{align*}
\left(\frac{2ag(t)}{\sqrt[3]{2a^2\Delta}},\sqrt{\frac{a}{\Delta}}g(t)((\alpha_2+\alpha_3-2\alpha_1)t+\alpha_1\alpha_2+\alpha_1\alpha_3-2\alpha_2\alpha_3)\right).
\end{align*}

Permuting the roots $\alpha_i$ and taking the six possible combinations of square and cube roots yields $18$ points of canonical height $\frac{1}{3}$ in $E(\kbar[t])$. Similarly, for the points $(x_0,y_0)$ above, the points
\begin{align*}
\lambda^t((-3x_0, 3\sqrt{-3}y_0))
\end{align*}
give $18$ points of canonical height $1$ in $E(\kbar[t])$. From the Mordell-Weil lattice in Table~\ref{table2}, these are all of the points of canonical height $\leq 1$, and the entry in Theorem~\ref{corSh2} follows easily from the explicit formulas for the points.

In the case $(2,2,0,0,0)$, we have $F=F_1F_2^2$ where $\deg F_1=\deg F_2=2$. Working over $\kbar$, we first compute the images of the descent maps on the points of canonical heights $1/3$ and $1$.  Using Lemma~\ref{lem:changeofcurve} and a change of variable, we reduce to considering the case $f=at^4+bt^3+ct^2\in \kbar[t]$, $a,c\in \kbar^*$, $b^2-4ac\neq 0$.   Then we find $12$ points of the form 
\begin{align*}
Q_i=(x_i,y_i)=\left(-\sqrt[3]{b+2\sqrt{ac}}t,\sqrt{a}t^2-\sqrt{c}t\right)
\end{align*}
for the various choices of the square and cube roots (choosing $\sqrt{ac}=\sqrt{a}\sqrt{c}$ consistent with the choices of $\sqrt{a}$ and $\sqrt{c}$), with each such point of canonical height $\frac{1}{3}$. It follows from the Mordell-Weil lattice entry in Table \ref{table2} that there are exactly $12$ points of the minimal canonical height $\mu=\frac{1}{3}$, and $12$ points of canonical height $1$. Thus, we have found all points of $E_{\min}$, and $\sqrt{-3}E_{\min}$ contains all points of canonical height $1$. The curve $C_2:x^3=-f(t)$ has genus $2$. Abusing notation, let $0$ and $\infty$ denote the unique points on $C_2$ mapping to $0$ and $\infty$, respectively, on $\mathbb{P}^1$ via the map $t:C_2\to \mathbb{P}^1$. Then $4\infty-2\cdot 0$ is a canonical divisor on $C_2$, and the map $Q_i\mapsto \frac{1}{2}\divisor(x_i-x)+(2\infty-0)$ is $2$-to-$1$ onto its image by Lemma~\ref{mu}. The images are effective theta characteristics, which for a genus $2$ curve are equivalent to odd theta characteristics. Since there are $6$ odd theta characteristics of $C$, the image must be precisely the odd theta characteristics of $C$. Similarly, let $t^{-1}(0)=\{P_1,P_2\}$ and $t^{-1}(\infty)=\{P_3,P_4\}$ in $C_3(\kbar)$, so that $D_3=P_1+P_2+P_3+P_4$. Then the map on $E_{\min}$, $Q_i\mapsto [\frac{1}{3}(y_i-y)]$, is $3$-to-$1$, and by explicit calculation the image is the set $[\frac{1}{3}(D_3+P_1+P_3),\frac{1}{3}(D_3+P_1+P_4), \frac{1}{3}(D_3+P_2+P_3), \frac{1}{3}(D_3+P_2+P_4)]\subset \Pic(C_3, \mathbb{Q}.D_3)[3]$.  Using Lemma~\ref{mu}, this proves the statement for points of canonical height $\frac{1}{3}$ and $1$. It remains to consider the points of canonical height $\frac{2}{3}$.

From the Mordell-Weil lattice, there are $36$ points of canonical height $2/3$ in $E(\kbar(t))$. By explicit calculation (the points $Q_i$ generate the Mordell-Weil group), the points of canonical height $2/3$ come in two types: there are $18$ such integral points $(x(t),y(t))$ with $(\deg x,\deg y)=(1,2)$ and $18$ integral points with $(\deg x, \deg y)=(2,3)$ and $x(0)=y(0)=0$. On each $\Gal(\kbar/k)$-invariant set, the map $Q\mapsto \frac{1}{2}\divisor(x_Q-x)+(2\infty-0)$ is $2$-to-$1$ onto the even theta characteristics, excluding the even theta characteristic $2\infty-0$ (there are $2^{2-1}(2^2+1)-1=9$ such even theta characteristics). The $3$-descent map on the points of canonical height $2/3$ maps onto the set $T=\{\pm[\frac{1}{3}(P_1-P_2)],\pm[\frac{1}{3}(P_3-P_4)]\}\subset \Pic(C_3, \mathbb{Q}.D_3)[3]$, and these maps induce a bijection between the set of points of canonical height $2/3$ and the product $T\times \{\text{even theta characteristics on $C_2$ excluding } [2\infty-0]\}$.  Then the corresponding entries in Theorem~\ref{corSh2} follow from the same reasoning as in the proof of Lemma~\ref{mu}.

In the case of type $(1,1,1,0,0)$, using Lemma~\ref{lem:changeofcurve} and a change of variable (over $k$), we may assume that $f$ has the form $f=at^2(t+1)$, $a\in k^*$. Then we find $6$ points of the form $(-\sqrt[3]{a}t, \sqrt{a}t)\in E(\kbar(t))$ (for the $6$ possible choices of the roots), each of canonical height $\frac{1}{6}$, and $6$ points of the form 
\begin{align*}
\lambda^t((3\sqrt[3]{a}t, 3\sqrt{-3a}t))=\left(-\frac{\sqrt[3]{a}(3t+4)}{3},\frac{\sqrt{-3a}(9t+8)}{9}\right),
\end{align*}
each of canonical height $\frac{1}{2}$. It follows from Table~\ref{table2} that these are all of the points of canonical heights $\frac{1}{6}$ and $\frac{1}{2}$. On the other hand, from \eqref{eq:ratPicbound}, $2(\#\Pic(C_2,\mathbb{Q}.D_2)[2]\setminus\{0\})\leq 6$ and $3(\#\Pic(C_3,\mathbb{Q}.D_3)[3]\setminus \{0\})\leq 6$, and this case of Theorem~\ref{corSh2} follows from Lemma~\ref{mu}. Note this case could also be stated in terms of the number of cube roots of $a$ in $k$ and the number of square roots of $a$ and $-3a$ in $k$.

The remaining cases of Theorem~\ref{corSh2} involve types $(2,0,0,1,0),(3,0,1,0,0),(4,1,0,0,0)$, and $(6,0,0,0,0)$. In each case, using Lemma~\ref{lem:changeofcurve} and a change of variables, we may assume that $f$ is squarefree, $2\leq \deg f\leq 6$. Except for the case of type $(3,0,1,0,0)$ and points of canonical height $1$, all of these statements follow from Theorem~\ref{th2desc}, Theorem~\ref{th3desc}, Lemma~\ref{lemWC2}, Lemma~\ref{lemWC3}, and Lemma~\ref{lemtheta}, with the following additional notes:
\begin{enumerate}
\item Type $(2,0,0,1,0)$, $\deg f=2$: From the Mordell-Weil lattice, there are $6$ points of canonical heights $\frac{1}{3}$ and $1$, and so $\sqrt{-3}E_{\min}$ is the set of points of canonical height $1$. Then  the statements for points of canonical height $1$ follow from Lemma~\ref{mu}.
\item Type $(3,0,1,0,0)$, $\deg f=3$:  We have $g(C_2)=g(C_3)=1$, and from Lemma~\ref{lemWC2}, $W_{\frac{3}{2}}(C_2)[2]\setminus \{0\}= \Pic(C_2, \mathbb{Q}.D_2)[2]\setminus \Pic(C_2)[2]$.
\item Type $(4,1,0,0,0)$, $\deg f=4$: We have $g(C_2)=3$ and $g(C_3)=1$. In this case, by Lemma~\ref{lemtheta}, the effective theta characteristics on $C_2$ coincide with the odd theta characteristics on $C_2$, and $\frac{1}{2}K_{C_2}$ is an odd theta characteristic. For points of canonical height $1$, we note that $W_3(C_2)[2]\setminus W_2(C_2)[2]$ can be identified with the even theta characteristics in $\Pic(C_2)$.  From Lemma~\ref{lemWC3}, we have $W_{\frac{4}{3}}(C_3)[3]\setminus \{0\}= \Pic(C_3, \mathbb{Q}.D_3)[3]\setminus \Pic(C_3)[3]$.
\item Type $(6,0,0,0,0)$, $\deg f=5,6$: We have $g(C_2)=4$ and $\frac{1}{2}K_{C_2}$ is a vanishing even theta characteristic (the unique vanishing even theta characteristic by Lemma~\ref{lemtheta}). So $W_3(C_2)[2]
\setminus\{0\}$ may be identified with the set of odd theta characteristics in $\Pic(C_2)$.
\end{enumerate}

Finally, we consider the case of type $(3,0,1,0,0)$ and canonical points of height $1$. It is possible to give an argument along the lines of the proofs of Theorem~\ref{th2desc} and Theorem~\ref{th3desc}, but instead we can simply appeal to the case $m=2$ of \cite[Th.~8.2(iv)]{Shioda} which states that in this case there are $24$ integral points in $E(\kbar[t])$ of canonical height $1$ of the form $(x(t),y(t))$, $\deg_t x(t)=2, \deg_t y(t)=3$.	By Theorem~\ref{thm:main-3}(ii) the map
\begin{align*}
\left\{P\in E(\kbar(t))\mid \hat{h}(P)=1\right\}\to \Pic(C_3)(\kbar)[3]\setminus\{0\}
\end{align*}
is $3$-to-$1$ onto its image. Since $g(C_3)=1$ and $\#\Pic(C_3)(\kbar)[3]-1=8$, the map is onto, with the preimage of a point a set of the form $\{P,\zeta_3 P,\zeta_3^2 P\}$. If $\hat{h}(P)=1$, then $P,\zeta_3 P, \zeta_3^2 P$ are easily seen to be inequivalent modulo $2E(\kbar(t))$, and from the form of the points the image of the descent map lands in $\Pic(C_2)(\kbar)[2]\setminus\{0\}\subset \Pic(C_2, \mathbb{Q}.D_2)(\kbar)[2]$.  As $g(C_2)=1$ and $\#\Pic(C_2)(\kbar)[2]-1=3$, it follows that we have a bijection
\begin{align*}
\left\{P\in E(\kbar(t))\mid \hat{h}(P)=1\right\}\to (\Pic(C_2)(\kbar)[2]\setminus\{0\})\times (\Pic(C_3)(\kbar)[3]\setminus\{0\})
\end{align*}
which induces a bijection
\begin{align*}
\left\{P\in E(k(t))\mid \hat{h}(P)=1\right\}\to (\Pic(C_2)[2]\setminus\{0\})\times (\Pic(C_3)[3]\setminus\{0\}).
\end{align*}
For $P\in E(\kbar(t))$, $\hat{h}(P)=1$, the image of the set $\{P,-P\}$ is of the form 
\begin{align*}
\{\rm{pt}\}\times \{\text{nontrivial cyclic subgroup of $\Pic(C_3)(\kbar)[3]$}\} \setminus\{0\}.
\end{align*}
It follows that
\begin{align*}
\#\{P\in E(\kbar(t))\mid x(P)\in k[t], \hat{h}(P)=1\}&=2(\#\Pic(C_2)[2]-1)\cdot\\&(\#\text{nontrivial $k$-rational cyclic subgroups of } \Pic(C_3)(\kbar)[3]).
\end{align*}
\end{proof}

\begin{rmq}
\label{rmk:Bremner}
We show in detail how to use our results to derive Theorem~1.2 of \cite{Bremner}, excluding the information on exceptional integral points and the explicit formulas for the integral points.

Suppose $f(t)\in\Q[t]$ is a squarefree cubic. Following Bremner \cite{Bremner}, for a cubic $g(t)=at^3+bt+c$, we write $f\sim g$ if there exists, $\alpha,d,e\in \mathbb{Q}, \alpha d\neq 0$, such that 
\begin{align*}
f(dt+e)=at^3+\alpha^4bt+\alpha^6c.
\end{align*}

Let $C_3$ be the elliptic curve over $\Q$ defined by $C_3:y^2=f(t)$. It is well-known that $C_3[3]\neq \{0\}$ if and only if $C_3$ has a Weierstrass model of the form $y^2=x^3+(6uv+27v^4)x+u^2-27v^6$ for some $u,v\in\mathbb{Z}$ (see, e.g., \cite{GT12}). Considering the cases $v=0$ and $v\neq 0$ separately and writing $\lambda=u/v^3$, equivalently, $C_3[3]\neq \{0\}$ if and only if $C_3$ has a Weierstrass model of the form $y^2=x^3+(6\lambda+27)x+\lambda^2-27$ or of the form $y^2=x^3+\lambda^2$ for some $\lambda\in\mathbb{Q}$.

Suppose now that we are in the first case of \eqref{Brem1}. Then since $a\in\mathbb{Q}^{*3}$, we have $f(t)\sim -t^3+bt+c$ for some $b,c\in\mathbb{Q}$. Then from the above (replacing $x$ by $-x$) we find that the first case of \eqref{Brem1} occurs if and only if
\begin{align*}
f(t)\sim -t^3-(6\lambda+27)t+\lambda^2-27
\end{align*}
or
\begin{align*}
f(t)\sim -t^3+\lambda^2
\end{align*}
for some $\lambda\in\mathbb{Q}$. Replacing $\lambda$ by $-(\lambda+5)$ in the first expression above, these parametrizations yield precisely the union of the cases (i)~a),~b) and (ii) ~a),~b) in \cite[Theorem~1.2]{Bremner} (depending on whether $2\lambda$ is a perfect cube), which are exactly the cases of \cite[Theorem~1.2]{Bremner} where there is a pair of points in $E(\Q[t])$ of canonical height $\frac{1}{2}$.

Now suppose that we are in the first case of \eqref{Brem2}. Then using $C_3[3]\neq \{0\}$, by the same argument (but no longer assuming $a$ is a perfect cube) we easily find
\begin{align*}
f(t)\sim at^3+a^3(6\lambda+27)t+a^4(\lambda^2-27)
\end{align*}
or
\begin{align*}
f(t)\sim at^3+a^4\lambda^2
\end{align*}
for some $a\in\mathbb{Q}^*$ and $\lambda\in\mathbb{Q}$. For the first case above, we compute twice the discriminant
\begin{align*}
2\Delta_{at^3+a^3(6\lambda+27)t+a^4(\lambda^2-27)}=-54a^{10}(\lambda+5)(\lambda+9)^3,
\end{align*}
which agrees with $2\Delta_f$ up to a $6$th power. Then in this case, $2\Delta_f$ is a perfect cube if and only if $2a(\lambda+5)=u^3$ for some $u\in\mathbb{Q}^*$. Solving for $a$ and substituting, we have
\begin{align*}
f(t)&\sim \frac{u^3}{2(\lambda+5)}t^3+\frac{u^9}{8(\lambda+5)^3}(6\lambda+27)t+\frac{u^{12}}{16(\lambda+5)^4}(\lambda^2-27)\\
&\sim \frac{1}{2(\lambda+5)}t^3+\frac{u^8}{8(\lambda+5)^3}(6\lambda+27)t+\frac{u^{12}}{16(\lambda+5)^4}(\lambda^2-27)\\
&\sim \frac{1}{2(\lambda+5)}t^3+\frac{1}{8(\lambda+5)^3}(6\lambda+27)t+\frac{1}{16(\lambda+5)^4}(\lambda^2-27)\\
&\sim  \lambda't^3-3\lambda'^2(\lambda'+1)t-\lambda'^2(8\lambda'^2-20\lambda'-1)/4
\end{align*}
where $\lambda'=-\frac{1}{2(\lambda+5)}$ (and we replaced $t$ by $-t$) in the last line. The parametrization in the last line is the union of the cases (i)~a) and (ii) ~d) in \cite[Theorem~1.2]{Bremner} (the case depending on whether $\lambda'$ is a perfect cube).

Finally, when $f(t)\sim at^3+a^4\lambda^2$, we similarly find that $2\Delta_f$ is a perfect cube if and only if $2a\lambda$ is a perfect cube. Solving for $a$ as above, by similar calculations we find that $f(t)\sim \lambda't^3+16(\lambda')^2$ for some $\lambda'\in\mathbb{Q}^*$, which is the union of the cases (i)~b) and (ii) ~g) in \cite[Theorem~1.2]{Bremner} (the case depending on whether $\lambda'$ is a perfect cube). Thus, in agreement with \cite[Theorem~1.2]{Bremner}, we find that there is a pair of points in $E(\Q[t])$ of canonical height $1$ precisely in cases (i)~a),~b) and (ii) ~d),~g) in \cite[Theorem~1.2]{Bremner}.
\end{rmq}



\bibliographystyle{amsalpha}
\bibliography{biblio3}



\bigskip

\textsc{Jean Gillibert}, Universit\'e de Toulouse, Institut de Math{\'e}matiques de Toulouse, CNRS UMR 5219, 118 route de Narbonne, 31062 Toulouse Cedex 9, France.

\emph{E-mail address:} \texttt{jean.gillibert@math.univ-toulouse.fr}
\medskip

\textsc{Emmanuel Hallouin}, Universit\'e de Toulouse, Institut de Math{\'e}matiques de Toulouse, CNRS UMR 5219, 118 route de Narbonne, 31062 Toulouse Cedex 9, France.

\emph{E-mail address:} \texttt{hallouin@univ-tlse2.fr}
\medskip

\textsc{Aaron Levin}, Department of Mathematics, Michigan State University, 619 Red Cedar Road, East Lansing, MI 48824.

\emph{E-mail address:} \texttt{adlevin@math.msu.edu}


\end{document}